\title{Global and local boundedness of Polish groups}
\author {Christian Rosendal}
\address{Department of Mathematics, Statistics, and Computer Science (M/C 249)\\
University of Illinois at Chicago\\
851 S. Morgan St.\\
Chicago, IL 60607-7045\\
USA}
\email{rosendal.math@gmail.com}
\urladdr{http://homepages.math.uic.edu/$~$rosendal}
\date {}
\newcommand{\norm}[1]{\lVert#1\rVert}
\newcommand{\Norm}[1]{\big\lVert#1\big\rVert}
\newcommand{\triple}[1]{|\!|\!|#1|\!|\!|}
\newcommand{\aff}[1]{{\rm Isom}_{\rm Aff}(#1)}
\newcommand{\lin}[1]{{\rm Isom}_{\rm Lin}(#1)}
\newcommand{\Aff}[1]{{\bf Aff}(#1)}
\newcommand{\GL}[1]{{\bf GL}(#1)}
\newcommand{\acl}[1]{{\bf acl}(#1)}
\newcommand {\B}{\go B}
\newcommand {\F}{\mathbb F}
\newcommand {\N}{\mathbb N}
\newcommand {\M}{\mathbb M}
\newcommand {\Q}{\mathbb Q}
\newcommand {\R}{\mathbb R}
\newcommand {\Z}{\mathbb Z}
\newcommand {\C}{\mathbb C}
\newcommand {\U}{\mathbb U}
\newcommand {\T}{\mathbb T}
\newcommand{\eps}{\epsilon}
\newcommand{\con}{\;\hat{}\;}
\newcommand{\tom} {\emptyset}
\newcommand{\saa}{\Rightarrow}
\newcommand{\equi}{\Longleftrightarrow}
\newcommand{\til}{\rightarrow}
\newcommand{\Lim}[1]{\mathop{\longrightarrow}\limits_{#1}}
\newcommand {\Del}{ \; \Big| \;}
\newcommand {\del}{ \; \big| \;}
\newcommand {\go} {\mathfrak}
\newcommand {\ku} {\mathcal}
\newcommand{\inv}{^{-1}}
\newcommand{\Id}{{\rm Id}}
\newcommand {\e} {\exists}
\renewcommand {\a} {\forall}
\newtheorem{thm}{Theorem}[section]
\newtheorem{cor}[thm]{Corollary}
\newtheorem{lemme}[thm]{Lemma}
\newtheorem{prop} [thm] {Proposition}
\newtheorem{defi} [thm] {Definition}
\newtheorem{claim}[thm] {Claim}
\theoremstyle{definition}
\newtheorem{exa}[thm]{Example}
\newtheorem{prob}[thm]{Problem}
\begin{document}
\subjclass[2000]{Primary: 22A25, Secondary: 03E15, 46B04}

\keywords{Property (OB), Roelcke precompactness, Strongly non-locally compact groups, Affine  actions on Banach spaces}
\thanks{The research for this article was partially supported by NSF grants DMS 0919700 and DMS 0901405}
\thanks{The author is thankful for many enlightening conversations on the topic of the paper with T. Banakh, J. Melleray, V. Pestov, S. Solecki and T. Tsankov}

\maketitle
\begin{abstract}
We present a comprehensive theory of boundedness properties for Polish groups developed with a main focus on {\em Roelcke precompactness} (precompactness of the lower uniformity) and {\em Property (OB)} (boundedness of all isometric actions on separable metric spaces). In particular, these properties are characterised by the orbit structure of isometric actions on metric spaces and isometric or continuous affine representations on separable Banach spaces.
\end{abstract}

\tableofcontents

\section{Global boundedness properties in Polish groups}
We will be presenting and investigating a number of boundedness properties in {\em Polish} groups, that is, separable, completely metrisable groups, all in some way capturing a different aspect of compactness, but without actually being equivalent with compactness. The main cue for our study comes from the following result, which reformulates compactness of Polish groups.

\begin{thm}\label{compact} The following are equivalent for a Polish group $G$.
\begin{enumerate}
\item $G$ is compact,
\item for any open $V\ni 1$ there is a finite set $F\subseteq G$ such that $G=FV$,
\item for any open $V\ni 1$ there is a finite set $F\subseteq G$ such that $G=FVF$,
\item whenever $G$ acts continuously and by affine isometries on a Banach space $X$, $G$ fixes a point of $X$.
\end{enumerate}
\end{thm}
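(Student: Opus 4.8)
The plan is to prove the cycle $(1)\Rightarrow(2)\Rightarrow(3)\Rightarrow(1)$ together with the equivalence $(1)\Leftrightarrow(4)$. The two implications $(1)\Rightarrow(2)$ and $(2)\Rightarrow(3)$ are soft: if $G$ is compact and $V\ni 1$ is open, the cover $\{gV:g\in G\}$ has a finite subcover, which gives $G=FV$; and for the second, $FV\subseteq FVF$ as soon as $1\in F$.

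For $(3)\Rightarrow(1)$ I would work with the uniform structures on $G$. Since $G$ is Polish it is Raikov complete, i.e.\ complete in its two-sided uniformity, so it suffices to show that $G$ is totally bounded in that uniformity, that is, that for every open $V\ni 1$ the group is covered by finitely many sets $S$ with $x^{-1}y\in V$ and $xy^{-1}\in V$ for all $x,y\in S$; combined with completeness and the fact that a uniform space is compact iff complete and totally bounded, this finishes the argument. I would first record the easy half: total boundedness in the left uniformity passes to the right uniformity by applying inverses, and hence to the two-sided uniformity by intersecting a finite cover by left-small sets with one by right-small sets. So it remains to deduce from $(3)$ that $G$ is totally bounded in the left uniformity, i.e.\ that $(3)\Rightarrow(2)$: given $V$, write $G=FWF$ for a small symmetric $W$ and try to absorb the outer factors. \emph{I expect this to be the main obstacle.} The difficulty is that the finite set $F$ depends on the neighbourhood, while $G$ need not have small invariant neighbourhoods, so conjugating a small neighbourhood by members of $F$ may enlarge it uncontrollably. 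I would attempt to break the circularity by fixing a neighbourhood basis $(V_n)$ at $1$ and choosing the witnessing finite sets increasing (possible, since if $G=F_nV_nF_n$ and $G=F'V_{n+1}F'$ then $G=(F_n\cup F')V_{n+1}(F_n\cup F')$), and then running an iterated pigeonhole/Ramsey argument: were $G$ not totally bounded in the two-sided metric, there would be an infinite $\epsilon_0$-separated set, and, passing repeatedly to a subsequence all of whose members lie in a single piece $fWf'$ of a cover and stripping off $f,f'$, one should eventually contradict $(3)$.

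For $(1)\Rightarrow(4)$ the argument is a barycentre computation. Given a continuous affine isometric action $g\cdot x=\pi(g)x+b(g)$ of the compact group $G$ on a Banach space $X$, the orbit map $g\mapsto g\cdot 0=b(g)$ is continuous with compact, hence bounded and separable, range, so the Bochner integral $x_0=\int_G b(g)\,d\mu(g)$ against normalised Haar measure $\mu$ exists. Using the cocycle identity $b(hg)=\pi(h)b(g)+b(h)$ and left-invariance of $\mu$ one computes $h\cdot x_0=\pi(h)x_0+b(h)=\int_G b(hg)\,d\mu(g)=\int_G b(g)\,d\mu(g)=x_0$, so $x_0$ is a fixed point.

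For $(4)\Rightarrow(1)$ I would argue by contraposition. If $G$ is not compact then, being Polish, it is not totally bounded in the two-sided metric, and a Ramsey argument on pairs produces, after possibly passing to inverses, an infinite set $\{g_n\}$ and an open symmetric $V\ni 1$ with $g_m^{-1}g_n\notin V$ for $m\neq n$ — so $G$ is not precompact in the left uniformity. From this uniformly discrete set I would manufacture a continuous affine isometric action on a Banach space with no fixed point, for instance starting from the isometric $G$-action by translations on the non-compact completion of $G$ in a left-invariant metric, or on a suitable space of uniformly continuous functions, and building a continuous cocycle that is not a coboundary. This is the second genuine difficulty: one must certify that the action has no fixed point, and since $(3)$ already forces every affine isometric action of a group satisfying it to have bounded orbits, the Banach space produced must be non-reflexive and the cocycle a bounded non-coboundary, so a naive Lipschitz-free-space construction (which only yields linear, hence fixed-point-possessing, actions) will not suffice.
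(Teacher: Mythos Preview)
First, note that the paper does not prove this theorem: it records that $(1)\Leftrightarrow(2)$ is folklore, cites Solecki \cite{actions} and Uspenski\u\i{} \cite{uspenskii} for $(3)\Rightarrow(1)$, invokes Ryll--Nardzewski for $(1)\Rightarrow(4)$, and attributes $(4)\Rightarrow(1)$ to Nguyen Van Th\'e--Pestov \cite{pestov}. Your soft implications $(1)\Rightarrow(2)\Rightarrow(3)$ are fine, and your Haar-measure barycentre for $(1)\Rightarrow(4)$ is a perfectly valid alternative to Ryll--Nardzewski (indeed simpler, since compactness makes the Bochner integral of the cocycle immediate).

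There are, however, two genuine gaps. For $(3)\Rightarrow(1)$ your reduction to showing left total boundedness is correct, but the phrase ``iterated pigeonhole/Ramsey argument \ldots\ should eventually contradict $(3)$'' is not a proof. Stripping off the outer factors $f,f'$ from $fWf'$ conjugates $W$, and since the finite set $F$ changes with the neighbourhood, the conjugates you produce at each stage are not controlled by any fixed finite set; nothing in your outline closes this loop. The Solecki--Uspenski\u\i{} arguments are genuinely delicate (Solecki's uses Baire category in an essential way), and your sketch does not reproduce them.

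For $(4)\Rightarrow(1)$ you have a concrete misconception. You write that ``a naive Lipschitz-free-space construction \ldots\ only yields linear, hence fixed-point-possessing, actions.'' This is wrong: the Arens--Eells construction produces not only the linear representation $\pi(g)m=m(g^{-1}\,\cdot\,)$ but also the \emph{affine} representation $\rho_e(g)m=\pi(g)m+m_{ge,e}$, and it is precisely this affine action that Nguyen Van Th\'e and Pestov use. The point---recorded later in the paper as Lemma~\ref{pestov}---is that if $G$ acts on $(X,d)$ and $g$ moves a finite set $F\subseteq X$ far from another finite set $E$, then $\rho_e(g)$ moves any molecule supported near $F$ far from any molecule supported near $E$. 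Applied to $X=G$ with a left-invariant metric when $G$ is non-compact (hence non-precompact), this gives a continuous affine isometric action on $\AE(G)$ with no fixed point. Your correct observation that the cocycle must be bounded (since $(3)$ forces bounded orbits) and the space non-reflexive is exactly why $\AE(G)$ is the right target; you should pursue that construction rather than discard it.
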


The implication from (1) to (4) follows, for example, from the Ryll-Nardzewski fixed point theorem and the equivalence of (1) and (2) are probably part of the folklore. On the other hand, the implication from (3) to (1) was shown independently by S. Solecki \cite{actions} and V. Uspenski\u\i{}  \cite{uspenskii}, while the implication from (4) to (1) is due to  L. Nguyen Van Th\'e and V. Pestov \cite{pestov}.

The groups classically studied in representation theory and harmonic analysis are of course the locally compact (second countable), but many other groups of transformations appearing in analysis and elsewhere fail to be locally compact, e.g., homeomorphism groups of compact metric spaces, diffeomorphism groups of manifolds, isometry groups of separable complete metric space, including Banach spaces, and automorphism  groups of countable first order structures. 
While the class of Polish groups is large enough to encompass all of these, it is nevertheless fairly well behaved and allows for rather strong tools, notably Baire category methods, though not in general Haar measure. As it is also reasonably robust, i.e.,  satisfies strong closure properties, it has received considerable attention for the last twenty years, particularly in connection with the descriptive set theory of continuous actions on Polish spaces \cite{becker}. 

The goal of the present paper is to study a variety of global boundedness properties of Polish groups. While this has been done for general topological groups in the context of uniform topological spaces, e.g., by J. Hejcman \cite{hejcman}, one of the most important boundedness properties, namely, Roelcke precompactness  has not received much attention until recently (see, e.g., \cite{uspenskii1, uspenskii2, uspenskii3, uspenskii,  tsankov, glassner}). Moreover, another of these, namely, property (OB) (see \cite{OB}) is not naturally a property of uniform spaces, but nevertheless has a number of equivalent reformulations, which makes it central to our study here.

The general boundedness properties at stake are, on the one hand, precompactness and boundedness of the four natural uniformities on a topological group, namely, the two-sided, left, right and Roelcke uniformities. On the other hand, we have boundedness properties defined in terms of actions on various spaces, e.g., (reflexive) Banach spaces, Hilbert space or  complete metric spaces.

Though we shall postpone the exact definitions till later in the paper, most of these can be simply given as in conditions (2) and (3) of Theorem \ref{compact}. For this and to facilitate the process of keeping track of the different notions of global boundedness of Polish groups, we include a diagram, Figure 1, which indicates how to recover a Polish group $G$ from any open set $V\ni 1$ using a finite set $F\subseteq G$ and a natural number $k$ (both depending on $V$).

\setlength{\unitlength}{1.2cm}
\begin{figure}
\begin{picture}(10,4.4)(-2.5,0.8)
\put(2,0.6){$\bullet$}
\put(2,1.3){$\bullet$}
\put(2,2){$\bullet$}
\put(1,3){$\bullet$}
\put(2,4){$\bullet$}
\put(3,5){$\bullet$}
\put(3.5,4.5){$\bullet$}
\put(4,4){$\bullet$}
\put(3,3){$\bullet$}

\put(2.3,0.6){\footnotesize Property (FH)}
\put(2.3,1.3){\footnotesize Property (ACR)}
\put(2.3,2){\footnotesize Property (OB): $G=(VF)^k$}
\put(-2,3){\footnotesize{Property ($\text{OB}_m$): $G=(VF)^m$}}
\put(-1.3,4){\footnotesize Roelcke precompact: $G=VFV$}
\put(3.3,5){\footnotesize Compact: $G=FVF=FV$}
\put(3.8,4.5){\footnotesize $\ku E_{ts}$-bounded}
\put(4.3,4){\footnotesize Bounded: $G=FV^k$}
\put(3.3,3){\footnotesize Roelcke bounded: $G=V^kFV^k$}

\put(1.07,3.07){\line(1,1){2}}
\put(2.07,2.07){\line(1,1){2}}

\put(2.07,2.07){\line(-1,1){1}}
\put(3.07,3.07){\line(-1,1){1}}
\put(4.07,4.07){\line(-1,1){1}}

\put(2.06,0.67){\line(0,1){1.4}}

\end{picture}
\caption{Implications between various boundedness properties}
\end{figure}
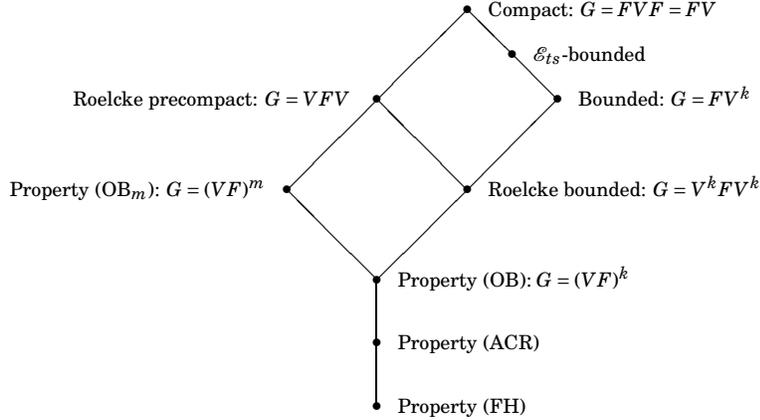

Referring to Figure 1 for the definition of property (OB), our first result characterises this in terms of affine and linear actions on Banach spaces.
\begin{thm}The following conditions are equivalent for a Polish group $G$.
\begin{enumerate}
\item $G$ has property (OB),
\item whenever $G$ acts continuously by affine isometries on a separable Banach space, every orbit is bounded,
\item any continuous linear representation $\pi\colon G\til \GL X$ on a separable Banach space is bounded, i.e., $\sup_{g\in G}\norm{\pi(g)}<\infty$.
\end{enumerate}
\end{thm}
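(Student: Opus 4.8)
The plan is to prove the cycle of implications $(1)\Rightarrow(3)\Rightarrow(2)\Rightarrow(1)$. The first two implications are soft and follow standard patterns; $(2)\Rightarrow(1)$ is where the real content lies.

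For $(1)\Rightarrow(3)$, I would first observe that every continuous linear representation $\pi\colon G\to\GL X$ on a Banach space is \emph{bounded on some neighbourhood of $1$}. Indeed, since $g\mapsto\norm{\pi(g)\xi}$ is continuous for each $\xi$, the function $g\mapsto\norm{\pi(g)}$ is lower semicontinuous, so the sets $B_n=\{g:\norm{\pi(g)}\le n\}$ and their inverses $B_n\inv$ are closed, and $\bigcup_n(B_n\cap B_n\inv)=G$. By the Baire category theorem some $B_N\cap B_N\inv$ has nonempty interior, hence contains $hU$ for some $h$ and some symmetric open $U\ni 1$; then $\norm{\pi(ab\inv)}\le\norm{\pi(a)}\,\norm{\pi(b)\inv}\le N^2$ for $a,b\in hU$, so $\pi$ is bounded by $N^2$ on the neighbourhood $W:=h(UU)h\inv$ of $1$. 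Feeding $W$ into property (OB) to obtain $G=(WF)^k$ with $F$ finite and $k\in\N$, one gets $\sup_{g}\norm{\pi(g)}\le\big(N^2\max_{f\in F}\norm{\pi(f)}\big)^k<\infty$.

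For $(3)\Rightarrow(2)$, given a continuous action of $G$ by affine isometries on a separable Banach space $X$, I would write it as $g\cdot\xi=\pi(g)\xi+b(g)$, where $b(g)=g\cdot 0$ and $\pi(g)\xi=g\cdot\xi-g\cdot 0$; then $\pi\colon G\to\lin X$ is a continuous linear representation and $b$ is a continuous cocycle, $b(gh)=\pi(g)b(h)+b(g)$. On the separable Banach space $Y=X\oplus\R$ with $\norm{(\xi,t)}=\norm\xi+|t|$ I would define $\rho(g)(\xi,t)=(\pi(g)\xi+t\,b(g),\,t)$. The cocycle identity is exactly what makes $\rho$ a homomorphism into $\GL Y$; it is strongly continuous, so $(3)$ gives $M:=\sup_g\norm{\rho(g)}<\infty$, and evaluating at $(0,1)$ yields $\norm{b(g)}\le M-1$ for all $g$. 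Since $\norm{g\cdot\xi}\le\norm\xi+\norm{b(g)}$, boundedness of the cocycle $b$ gives boundedness of every orbit, so $(2)$ follows.

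For $(2)\Rightarrow(1)$ I would argue contrapositively: assuming $G$ fails property (OB), produce a continuous affine isometric action on a separable Banach space with an unbounded orbit. Fix a symmetric open $V\ni 1$ with $(VF)^k\ne G$ for every finite $F\subseteq G$ and every $k\in\N$. Using a countable dense subset of $G$ and a Birkhoff--Kakutani-type recursion, I would build open symmetric sets $(W_n)_{n\in\Z}$ with $1\in W_n$, $W_n^3\subseteq W_{n+1}$, with $(W_{-n})_{n\ge 0}$ a neighbourhood basis at $1$ and $\bigcup_{n\ge 0}W_n=G$; the point is that the covering failure $(VF)^k\ne G$ propagates along the recursion to the \emph{escaping property} $W_n^{\,j}\ne G$ for all $n$ and all $j$. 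The length function $\ell(g)=\inf\{\sum_i 2^{n_i}:g\in W_{n_1}\cdots W_{n_k}\}$ is then continuous, symmetric and subadditive with $\ell(1)=0$, and the quantitative Birkhoff--Kakutani estimate $\{\,\ell\le 2^n\,\}\subseteq W_{n+2}$ combined with the escaping property forces $\sup_g\ell(g)=\infty$. Thus $d(g,h)=\ell(g\inv h)$ is a continuous, left-invariant, unbounded (pseudo)metric on $G$, and letting $G$ act by left translation --- hence by affine isometries --- on the separable Arens--Eells (Lipschitz-free) space of $(G,d)$ gives a continuous action whose orbit through the base point is isometric to $(G,d)$, hence unbounded, contradicting $(2)$. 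The main obstacle is precisely this last step: manufacturing an honest unbounded invariant metric out of the purely combinatorial failure $(VF)^k\ne G$, for which the escaping-neighbourhood recursion together with the classical quantitative Birkhoff--Kakutani lemma are the essential tools; by comparison, the separability of the Arens--Eells space and the joint continuity of the translation action are routine.
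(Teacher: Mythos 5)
Your argument is correct, and it closes the cycle by a genuinely different route from the paper's. The paper only proves the soft implications $(1)\Rightarrow(2)$ and $(5)\Rightarrow(3)$ directly (where $(5)$ is the covering reformulation $G=(FV)^k$ of (OB), imported from \cite{OB}); its two substantive steps are $(2)\Rightarrow(1)$ --- essentially the same Arens--Eells affine action you use --- and $(3)\Rightarrow(7)$, i.e., from an unbounded compatible left-invariant metric $d$ it manufactures an unbounded continuous \emph{linear} representation by completing the molecule space $\M(G)$ under the weighted kernel $\psi(g,h)=e^{d(g,1)}e^{d(h,1)}d(g,h)$; verifying that $\norm\cdot_\psi$ is a genuine norm and that $\norm{\pi(g_n)}_\psi\to\infty$ costs a rather technical lower-bound claim. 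You avoid that computation entirely by instead proving $(3)\Rightarrow(2)$ via the linearization $\rho(g)(\xi,t)=(\pi(g)\xi+t\,b(g),t)$ on $X\oplus\R$, where boundedness of $\rho$ is literally boundedness of the cocycle; this is softer, and in exchange the unbounded object you produce in the failure case is an affine isometric action rather than a linear representation, which is all your cycle requires. Two small remarks: the Baire-category argument in $(1)\Rightarrow(3)$ is not needed under the paper's conventions, since ``continuous representation'' is defined there (Section 1.3) to include boundedness of $\norm{\pi(g)}$ near $1$, though your argument correctly shows this is automatic from strong continuity for Polish $G$; and your $(2)\Rightarrow(1)$ quietly re-proves one direction of the equivalence of (OB) with the covering property (failure of $G=(VF)^k$ yields an unbounded continuous left-invariant \'ecart), which the paper simply cites from \cite{OB} --- your Birkhoff--Kakutani recursion with the escaping property $W_n^j\ne G$ does carry through, since each $W_n$ with $n\geqslant 0$ is contained in some $(VE)^m$ with $E$ finite.
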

Examples of Polish groups with property (OB) include many transformation groups of highly homogeneous mathematical models, e.g., homeomorphism groups of spheres ${\rm Homeo}(S^n)$ and of the Hilbert cube ${\rm Homeo}([0,1]^\N)$  \cite{OB}.

While property (FH), that is, the requirement that every continuous affine isometric action on a Hilbert space has a fixed point, delineates a non-trivial subclass of locally compact groups, by a result of U. Haagerup and A. Przybyszewska \cite{haagerup}, if one instead considers affine isometric actions on reflexive spaces, one obtains instead simply the class of compact groups. We shall show that the same holds when, rather than changing the space on which the group acts, one considers just continuous affine actions.  

\begin{thm}
Any non-compact locally compact Polish group acts continuously by affine transformations on a separable Hilbert space such that all orbits are unbounded.
\end{thm}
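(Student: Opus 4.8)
The plan is to make $G$ act on $\mathcal H=L^2(G,\mu)$, where $\mu$ is a left Haar measure (necessarily infinite, as $G$ is non-compact), with linear part a non-isometric twist of the left-regular representation. First I would fix a proper, compatible, left-invariant metric $d$ on $G$ --- available since $G$, being Polish and locally compact, is $\sigma$-compact and metrisable (cf.\ \cite{haagerup}) --- and set $\phi(g)=d(1,g)$; this is a proper continuous function enjoying the ``left-Lipschitz'' bound $|\phi(gx)-\phi(x)|\le\max\big(\phi(g),\phi(g\inv)\big)$ for all $g,x\in G$, immediate from the triangle inequality and left-invariance. Fix $\lambda>0$ and put $(\pi(g)f)(x)=e^{\lambda(\phi(x)-\phi(g\inv x))}\,f(g\inv x)$. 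Then $\pi$ is a homomorphism of $G$ into the invertible bounded operators; the substitution $x=gy$ and the left-Lipschitz bound give $\norm{\pi(g)}\le e^{\lambda\max(\phi(g),\phi(g\inv))}<\infty$, and strong continuity follows by reducing to $f\in C_c(G)$. The decisive feature of this $\pi$ is that $\sup_{g\in G}\norm{\pi(g)\xi}=\infty$ for \emph{every} non-zero $\xi$: if $|\xi|\ge\delta$ on a compact $K$ of positive measure then $\phi(gy)\ge\phi(g)-\phi(y\inv)$ for $y\in K$, whence $\norm{\pi(g)\xi}^2\ge\delta^2\mu(K)\,e^{-2\lambda C_K}e^{2\lambda\phi(g)}$ with $C_K=\sup_{z\in K}\big(\phi(z)+\phi(z\inv)\big)$, and $\phi$ is unbounded on $G$.

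This property gives a clean dichotomy. If $G$ acts continuously and affinely on $\mathcal H$ with linear part $\pi$ and cocycle $b$, so that $g\cdot v=\pi(g)v+b(g)$, then two bounded orbits through $v_0\neq v_1$ would make $\norm{\pi(g)(v_0-v_1)}\le\norm{g\cdot v_0}+\norm{g\cdot v_1}$ bounded, which is impossible; and a single bounded orbit must collapse to a fixed point by the same estimate applied to two of its points. Since a fixed point exists precisely when $b$ is a coboundary, an affine action with linear part $\pi$ has \emph{all} orbits unbounded if and only if its cocycle represents a non-zero class in $H^1_{\rm ct}(G,\pi)$. So everything reduces to exhibiting one continuous $1$-cocycle for $\pi$ which is not a coboundary.

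For this, note that $\pi(g)\big(e^{\lambda\phi}\1_A\big)=e^{\lambda\phi}\1_{gA}$ for any Borel $A\subseteq G$, so $b(g):=e^{\lambda\phi}\big(\1_A-\1_{gA}\big)$ is automatically a $\pi$-cocycle; it lies in $L^2(G,\mu)$ exactly when $\nu(A\sym gA)<\infty$ for all $g$, where $d\nu=e^{2\lambda\phi}d\mu$, and --- since the only functions $f$ fixed by all $\pi(g)$ are the scalar multiples of $e^{\lambda\phi}$, which are not in $\mathcal H$ --- it is a coboundary exactly when $\nu(A)<\infty$ or $\nu(G\setminus A)<\infty$. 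Hence it suffices to find a Borel $A$ with $\nu(A)=\nu(G\setminus A)=\infty$ and $\nu(A\sym gA)<\infty$ for every $g$; continuity of $g\mapsto b(g)$ is then routine via quasi-invariance of $\nu$. This is easy when $G$ surjects continuously onto $\R$ or $\Z$ --- take $A=\psi\inv([c,\infty))$ for a generic level $c$ --- and for amenable $G$ one may instead just invoke the Haagerup property, obtaining even a proper affine \emph{isometric} action. I expect the main obstacle to be the genuinely rigid case, Kazhdan groups being the test case: for them no affine isometric action on Hilbert space has unbounded orbits (so $\pi$ really must be non-unitary), and no such $A$ exists over the transitive left-translation $G$-space, so one is forced to run the same exponential twist over a different, typically non-transitive, $G$-space --- e.g.\ $G$ acting on itself by conjugation, where a non-trivial character of the centraliser of a well-chosen element (such as a $\Z^2$ inside a higher-rank lattice) yields, after the twist, a non-coboundary cocycle. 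Making this work uniformly across all non-compact locally compact Polish groups is the heart of the matter.
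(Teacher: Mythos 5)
Your linear part and your reduction are sound: the exponentially twisted regular representation $\pi$ does satisfy $\sup_{g\in G}\norm{\pi(g)\xi}=\infty$ for every $\xi\neq 0$ (and is essentially the same operator the paper builds, with $e^{\lambda\phi}$ playing the role of the weight $r^{n}$ on the annuli $B_n=V^n\setminus V^{n-1}$), and your dichotomy correctly reduces the theorem to exhibiting one continuous $\pi$-cocycle that is not a coboundary. But the proposal stops exactly there: the existence of a Borel set $A$ with $\nu(A)=\nu(G\setminus A)=\infty$ and $\nu(A\sym gA)<\infty$ for all $g$ is never established, and you say yourself that making this "work uniformly across all non-compact locally compact Polish groups is the heart of the matter." The fallback strategies you sketch do not close the gap: a surjection onto $\Z$ or $\R$ need not exist; the Haagerup route covers amenable groups but not, say, groups with relative property (T) that are neither amenable nor Kazhdan; and the suggestion for Kazhdan groups (a character of the centraliser of a $\Z^2$ in a higher-rank lattice) is tied to very particular examples and is not an argument for a general non-compact locally compact Polish group. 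So as written this is not a proof.

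The missing idea is that you do not need an exotic almost-invariant set at all, because you are free to shrink the Hilbert space rather than enlarge the class of cocycles. Take the \emph{coboundary} cocycle $b(g)=\pi(g)\chi_V-\chi_V$ of a compact generating neighbourhood $V$ (so continuity is immediate and the affine action $\rho(g)\xi=\pi(g)\xi+b(g)$ has the single fixed point $-\chi_V$), choose the exponential weight large compared to the growth rate $\lambda(V^n)\leqslant |F|^{n-1}\lambda(V)$ so that $\xi\mapsto\int_G\xi\,d\lambda$ is a bounded functional on the weighted space, and restrict $\rho$ to the closed invariant hyperplane $\ku H_0=\{\xi\del \int_G\xi\,d\lambda=0\}$, which contains the range of $b$ but not the fixed point $-\chi_V$. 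On $\ku H_0$ the cocycle is no longer a coboundary, and a direct estimate (this is the paper's Claim) even gives $\norm{\rho(g)\xi-\xi}\til\infty$ as $g\til\infty$ for every $\xi\in\ku H_0$, which is stronger than unboundedness of orbits. Note also that this weight calibration genuinely uses compact generation; the non-compactly generated case must be split off separately, where $G$ is an increasing union of proper open subgroups and the standard affine \emph{isometric} action with unbounded orbits (Corollary 2.4.2 of \cite{BHV}) already suffices. Your proposal addresses neither the case split nor the integrability of the weight needed to make your "only invariant functions are multiples of $e^{\lambda\phi}$" argument interact correctly with a proper invariant subspace.
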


As a corollary, we also obtain information for more general Polish groups.

\begin{cor}Suppose $G$ is a Polish group and $W\leqslant G$ is an open subgroup of 
infinite index with $G= {\rm Comm}_G (W )$. Then $G$ admits a continuous affine representation 
on a separable Hilbert space for which every orbit is unbounded. 
\end{cor}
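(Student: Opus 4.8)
The plan is to reduce to the previous theorem by passing to the \emph{Schlichting completion} of the pair $(G,W)$ (also known as its relative profinite completion), a construction for which the commensuration hypothesis is precisely what is needed to ensure local compactness.

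Since $W$ is open and $G$ is separable, the coset space $G/W$ is countably infinite, and the left translation action $g\cdot hW=ghW$ of $G$ on $G/W$, given the discrete topology, is continuous, the point stabiliser $gWg\inv$ being open. This produces a continuous homomorphism $\varphi\colon G\to{\rm Sym}(G/W)$ into the Polish group of permutations of $G/W$; put $H=\overline{\varphi(G)}$, a closed, hence Polish, subgroup of ${\rm Sym}(G/W)$.

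The heart of the argument is to verify that $H$ is a \emph{non-compact locally compact} group, and this is where the hypothesis $G={\rm Comm}_G(W)$ is used. For each $g\in G$ the subgroup $W\cap gWg\inv$ has finite index in $W$, which says exactly that every $W$-orbit on $G/W$ is finite; hence the closure $K:=\overline{\varphi(W)}$ in ${\rm Sym}(G/W)$ is contained in the product $\prod_{x\in G/W}\varphi(W)\cdot x$ of finite sets and is therefore compact. At the same time, the stabiliser $H_W\leqslant H$ of the coset $W$ is clopen and meets the dense subgroup $\varphi(G)$ in exactly $\varphi(W)$, so that $K=\overline{\varphi(W)}=H_W$; thus $K$ is a compact open subgroup of $H$, and $H$ is locally compact. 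Finally, since $K$ is open and $\varphi(G)$ is dense, $gW\mapsto\varphi(g)K$ is a bijection of $G/W$ onto $H/K$, whence $[H:K]=[G:W]=\infty$ and $H$ is not compact.

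Applying the preceding theorem to $H$ furnishes a continuous affine action of $H$ on a separable Hilbert space $\mathcal H$ all of whose orbits are unbounded; composing with $\varphi$ gives a continuous affine action of $G$ on $\mathcal H$. For any $\xi\in\mathcal H$ the orbit $G\cdot\xi=\varphi(G)\cdot\xi$ is the image of the dense set $\varphi(G)$ under the continuous orbit map $h\mapsto h\cdot\xi$, hence dense in the $H$-orbit $H\cdot\xi$, which is unbounded; so $G\cdot\xi$ is unbounded as well, completing the proof. The only point requiring real care is the interplay in the third step — that finiteness of the $W$-orbits on $G/W$ makes $K$ compact while $W$ being open of infinite index makes $K$ open of infinite index — the remaining verifications (continuity of $\varphi$, Polishness of $H$, and the transfer of unboundedness along the dense image) being routine.
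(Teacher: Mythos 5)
Your proof is correct and follows essentially the same route as the paper: the paper obtains the corollary by combining Proposition \ref{locally compact quotient} (which shows, exactly as in your third paragraph, that $\overline{\pi_W(G)}\leqslant{\rm Sym}(G/W)$ is a non-compact, locally compact Polish group whenever $G={\rm Comm}_G(W)$ and $[G:W]=\infty$) with Theorem \ref{loc comp equiv}, and then pulls the affine representation back along the dense image. Your inline verification that $\overline{\varphi(W)}$ is a compact open subgroup of infinite index is just an unpacking of that proposition, so there is nothing to add.
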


A second part of our study deals with local versions of these boundedness properties. More exactly, Solecki \cite{haar null} asked whether the class of locally compact groups could be characterised among the Polish groups as those for which there is a neighbourhood of the identity $U\ni 1$ such that  any other neighbourhood $V\ni1$ covers $U$ by a finite number of two-sided translates (he also included a certain additional technical condition of having a free subgroup at $1$ that we shall come back to).  While positive results were obtained by M. Malicki in \cite{malicki}, we shall show that this is not so by presenting a non-locally compact Polish group with a free subgroup at $1$ satisfying the above mentioned covering property for some neighbourhood $U\ni 1$.

\begin{thm}
There is a non-locally compact, Weil complete Polish group, having a free subgroup at $1$ and an open subgroup $U$ whose conjugates $fUf\inv$ provide a neighbourhood basis at $1$. 
\end{thm}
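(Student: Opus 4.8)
The plan is to realise $G$ as the completion of an ascending HNN extension of a free group, topologised so that the derived series becomes a neighbourhood basis at the identity.

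\emph{Construction.} Let $F$ be the free group on generators $x_0,x_1,x_2,\dots$, write $F^{(l)}$ for its $l$-th derived subgroup, and fix an isomorphism $\theta\colon F\to F^{(1)}=[F,F]$ (such exists, since $[F,F]$ is again free of countably infinite rank). Being an isomorphism onto its image, $\theta$ carries the derived series of $F$ to that of $\theta(F)$, so $\theta^l(F)=F^{(l)}$ for all $l$. Let $\Lambda=\langle F,t\mid tht^{-1}=\theta(h),\ h\in F\rangle$ be the associated ascending HNN extension, and topologise $\Lambda$ by taking the decreasing chain $\{F^{(l)}:l\in\N\}$ as a neighbourhood basis at $1$. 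The first point to settle is that this is a Hausdorff group topology: Hausdorffness is $\bigcap_l F^{(l)}=1$ (residual solvability of $F$, which follows from its residual nilpotence as $F^{(l)}\le\gamma_{l+1}(F)$), and the group-topology axiom — that $gF^{(l)}g^{-1}$ contains some $F^{(m)}$ for every $g\in\Lambda$ — is a short normal-form computation using that each $F^{(l)}$ is characteristic, hence normal, in $F$. Let $G$ be the Raikov (two-sided) completion of $\Lambda$; it is Polish, and $U=\overline F\le G$ is an open subgroup.

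\emph{Identifying $G$; the easy verifications.} Since $\langle t\rangle$ is discrete in $\Lambda$ and conjugation by $t$ extends to a topological automorphism $\sigma$ of $\overline F$, writing $H=\bigcup_m t^{-m}Ft^m$ — so that $\Lambda=H\rtimes\langle t\rangle$ and each $F^{(l)}$ is normal in $H$ — one obtains $G=W\rtimes_\sigma\Z$, where $W=\overline H=\varprojlim_l H/F^{(l)}$; in particular $t^lUt^{-l}=\overline{F^{(l)}}$ for every $l\ge 0$. Granting this: (i) the conjugates $t^lUt^{-l}=\overline{F^{(l)}}$ are a neighbourhood basis at $1$ by construction. (ii) $G$ is not locally compact: $U=\varprojlim_l F/F^{(l)}$ maps continuously onto the infinite discrete group $F/[F,F]\cong\Z^{(\N)}$, so is not compact; if $G$ were locally compact it would, being non-archimedean, contain a compact open subgroup (van Dantzig), forcing some $\overline{F^{(l)}}$ to be compact, which is impossible since each $\overline{F^{(l)}}$ is a copy of $U$. (iii) $G$ is Weil complete: the filtration of $H$ consists of normal subgroups, so $W$ is pro-discrete, hence has small invariant neighbourhoods and is complete in its (unique) uniformity; writing a left-Cauchy sequence in the metrizable group $G$ as $x_n=(w_n,a_n)$ and computing $x_n^{-1}x_m=(\sigma^{-a_n}(w_n^{-1}w_m),\,a_m-a_n)$, discreteness of $\langle t\rangle$ forces $(a_n)$ eventually constant and then $(w_n)$ left-Cauchy in $W$, so $x_n$ converges.

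\emph{Free subgroup at $1$; the main obstacle.} For (iv), regroup the generators of $F$ into a free product $F=A_0\ast A_1\ast\cdots$ with $A_l=\langle x_{2l},x_{2l+1}\rangle$. Since $A_l$ is a free factor of $F$, hence a retract, one has $A_l\cap F^{(l+1)}=A_l^{(l+1)}$, which is properly contained in $A_l^{(l)}\subseteq F^{(l)}$; so one may pick $g_l\in F^{(l)}\setminus F^{(l+1)}$ with $g_l\in A_l$. As $\{g_l\}$ consists of one nontrivial element from each free factor, it freely generates a free subgroup of $F$; moreover $g_l\to 1$ in $G$, and every reduced word in $\{g_l:l\ge n\}$ lies in $F^{(n)}$, which is exactly Solecki's uniformity condition for a free subgroup at $1$. (It then also follows that every neighbourhood $V$ of $1$ contains some $fUf^{-1}$, so $U\subseteq f^{-1}Vf$ is covered by a single two-sided translate of $V$ — the point of the example.) The real difficulty is that the four requirements pull against each other, and the genuine tension is between Weil completeness and having a free subgroup at $1$: completeness in the left uniformity forces the small open subgroups to be inverse limits of discrete groups, with no room for elements to drift off to infinity, which pushes one toward pro-solvable — essentially abelian-flavoured — local structure, whereas a free subgroup at $1$ demands honest non-abelian freeness arbitrarily deep inside $U$. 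The derived series of $F$ is precisely the filtration that reconciles them: the quotients $F/F^{(l)}$ are solvable, hence tame enough for completeness, while the derived subgroups of the free factors $A_l$ furnish free generators lying as deep in the filtration as desired. I expect the two most delicate points of the write-up to be the verification that $\{F^{(l)}\}$ genuinely defines a group topology on the non-abelian group $\Lambda$, and the concrete identification $G\cong W\rtimes_\sigma\Z$ underpinning the completeness argument.
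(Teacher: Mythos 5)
Your construction is a genuinely different route from the paper's (which uses $\Z\ltimes H_{\F_\infty}$, where $H_{\F_\infty}$ is the group of $\Z$-indexed sequences in $\F_\infty$ that are eventually trivial to the right, topologised by the support filtration and shifted by $\Z$), and the parts concerning the group topology on $\Lambda$, non-local compactness, Weil completeness and the fact that the conjugates $t^lUt^{-l}$ form a neighbourhood basis all look sound. But the free-subgroup-at-$1$ step fails, and the failure is not a fixable oversight. Solecki's definition, as quoted in the paper, requires in addition to $g_n\to 1$ and freeness of the basis that each finitely generated subgroup $\langle g_1,\dots,g_n\rangle$ be \emph{discrete}. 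Your observation that every word in $\{g_l: l\geqslant n\}$ lies in $F^{(n)}$ is not that condition, and discreteness in fact fails: for any subgroup $K\leqslant F$ one has $K^{(m)}\leqslant F^{(m)}$ for all $m$, so if $K$ is non-abelian (hence a non-abelian free group, hence non-solvable) then $F^{(m)}\cap K\supseteq K^{(m)}\neq\{1\}$ for every $m$, i.e.\ every basic neighbourhood of $1$ meets $K$ non-trivially. Already $\langle g_0,g_1\rangle$ is non-discrete, and non-discreteness passes up to every $\langle g_0,\dots,g_n\rangle$ containing it.

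No other choice of generators rescues this in your group: a sequence converging to $1$ eventually lies in the basic open subgroup $F=F^{(0)}$, so any free basis tending to $1$ contains two elements of $F$ generating a rank-$2$ free subgroup of $F$, which by the above is non-discrete. This is exactly the tension you flag at the end, but the derived series resolves it the wrong way: it places free elements arbitrarily deep in the filtration precisely by making every non-abelian subgroup of $F$ accumulate at $1$. The paper sidesteps this by using a \emph{support} filtration rather than a solvability filtration: in $\Z\ltimes H_{\F_\infty}$ the basic subgroups $V_m$ only constrain coordinates $\geqslant m$, the free generators $\beta_i$ are the sequences equal to $a_i$ on $(-\infty,-i]$ and trivial elsewhere, and the evaluation homomorphism $\pi_{-n}$ at the single coordinate $-n$ sends $\beta_1,\dots,\beta_n$ to a free basis of $\F_\infty$, whence $V_{-n}\cap\langle\beta_1,\dots,\beta_n\rangle=\{1\}$. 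To salvage your HNN/completion framework you would need to replace $\{F^{(l)}\}$ by a chain of subgroups, normal in $H$ with trivial intersection, which some finitely generated non-abelian free subgroups eventually avoid --- which is essentially what the paper's coordinate filtration provides.
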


The third and final part of our study deals with the consequences at the microscopic level of the previously mentioned  global boundedness properties. By this we understand not only what happens in a single neighbourhood of the identity, but rather what happens as one decreases the neighbourhood to $1$. As it turns out, the stronger global boundedness properties, namely, Roelcke precompactness (cf. Figure 1) and {\em oligomorphic}, which is a specific form of Roelcke precompactness, prevent further covering properties at the microscopic level. Moreover, this can in turn be utilised in the construction of affine isometric action with non-trivial local dynamics.

We recall that $S_\infty$ is the Polish group consisting of all permutations of the infinite discrete set $\N$ equipped with the topology of pointwise convergence.
\begin{thm}Let $G$ be an oligomorphic closed subgroup of $S_\infty$. Then there is a neighbourhood basis at $1$, $V_0\supseteq V_1\supseteq V_2\supseteq \ldots\ni 1$ such that 
$$
G\neq\bigcup_nF_nV_nE_n
$$
for all finite subsets $F_n,E_n\subseteq G$.

It follows that $G$ admits a continuous affine isometric action on a separable Banach space $X$ such that for some $\eps_n>0$ and all sequences of compact subsets $C_n\subseteq X$ there is a $g\in G$ satisfying
$$
{\rm dist}(gC_n,C_n)>\eps_n
$$
for all $n\in \N$.
\end{thm}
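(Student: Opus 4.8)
The plan is to prove the two assertions separately, the combinatorial statement about coverings first, then deduce the dynamical consequence. For the first part, I would exploit the standard coordinatisation of an oligomorphic $G\leqslant S_\infty$: the pointwise stabilisers $V_A=\{g\in G\mid g\begr A=\Id\}$ of finite subsets $A\subseteq\N$ form a neighbourhood basis at $1$, and oligomorphicity says that for each $n$ the group $G$ has only finitely many orbits on $\N^n$, equivalently on the set of $n$-element subsets of $\N$. The key point is to estimate the ``size'' of a double coset $V_A g V_A$ acting on tuples. Concretely, I would choose a strictly increasing sequence of finite sets $A_0\subseteq A_1\subseteq A_2\subseteq\cdots$ with $\bigcup_n A_n=\N$ and set $V_n=V_{A_n}$, and show that one cannot cover $G$ by countably many sets $F_nV_nE_n$. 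Suppose otherwise. Fix $n$; a single set $F_nV_nE_n$, when applied to a finite tuple $\bar a$ enumerating $A_n$, produces at most $|F_n|\cdot|E_n|$ many $V_n$-orbits' worth of images, and each such image tuple lies in the $\acl$-closed set generated by $A_n$ together with the finitely many parameters appearing in $E_n$. The quantitative obstruction I want is this: as $n\to\infty$, the number of orbits of $G$ on the relevant configuration space grows without bound (for an infinite oligomorphic group the orbit-counting sequence on injective $n$-tuples tends to infinity), and a Baire category / cardinality argument then shows $\bigcup_n F_nV_nE_n$ is too meagre, or too orbit-poor, to be all of $G$. I expect this counting estimate — precisely choosing the $A_n$ and the right invariant of $V_nEV_n$ so that the obstruction compounds across the union rather than just for a single $n$ — to be the main obstacle; one likely needs a diagonalisation in the choice of $A_n$ against all possible finite $F_n,E_n$.

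For the second assertion I would pass from the covering failure to an affine isometric action by the now-standard ``orbit space'' construction. Given the sequence $V_0\supseteq V_1\supseteq\cdots$ from the first part, for each $n$ let $X_n$ be a Banach space carrying a continuous affine isometric $G$-action in which the $V_n$-orbit of a distinguished point is small but the full $G$-orbit is ``spread out'' relative to any finite union of translates — for instance the space of finitely supported functions on $G/V_n$ (or on the $V_n$-coset space), completed in a suitable norm, with $G$ acting by the left regular-type affine action $g\cdot \xi = \lambda(g)\xi + b_n(g)$ where $b_n$ is a cocycle detecting the coset of $g$. Then set $X=\big(\bigoplus_n X_n\big)_{c_0}$ or an $\ell^p$-sum, with the diagonal $G$-action; this is again a continuous affine isometric action on a separable Banach space. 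The failure of $G=\bigcup_n F_nV_nE_n$ translates, coordinate by coordinate, into: for every sequence of compact $C_n\subseteq X$ — each $C_n$ being approximable by finitely many translates of a small $V_n$-ball, hence ``captured'' by some finite $F_n, E_n$ — there is $g\in G$ that escapes all of them simultaneously, giving $\mathrm{dist}(gC_n,C_n)>\eps_n$ for the $\eps_n$ coming from the diameter of the $V_n$-orbit in $X_n$. The compactness of $C_n$ is essential here: a compact set can be covered by finitely many balls, and each ball of small radius around a point is contained in a translate of the $V_n$-orbit neighbourhood, which is exactly what lets one replace ``$C_n$'' by ``$F_nV_nE_n$''.

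The reduction in the last paragraph is mostly bookkeeping once the spaces $X_n$ are set up correctly, so the real content is (a) the combinatorial theorem and (b) verifying that $X_n$ can be built so that the $V_n$-orbit of the base point has diameter bounded below by a fixed $\eps_n>0$ while remaining a genuine continuous affine isometric action. For (b) I would take the cocycle $b_n(g)=\1_{gV_n}-\1_{V_n}\in \ell^2(G/V_n)$ (or the analogous $c_0$-valued object), which is a coboundary for the left-regular representation but whose orbit map $g\mapsto b_n(g)$ has the property that $b_n(g)$ and $b_n(g')$ are far apart unless $g,g'$ lie in the same finite union of cosets — precisely quantifiable by $\|\1_{gV_n}-\1_{g'V_n}\|$. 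Assembling these over $n$ and invoking part one finishes the proof. The step I am least sure will go through cleanly is making the two ``finite data'' bookkeepings — the $F_n,E_n$ witnessing a cover and the $C_n$ witnessing compactness — match up with the same constants, so I would state an intermediate lemma isolating exactly the covering-type statement that the affine construction needs, prove that lemma is equivalent to (or follows from) the displayed non-covering conclusion, and only then build $X$.
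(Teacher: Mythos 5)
There are two genuine gaps here, one in each half of your argument.

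For the combinatorial statement, the obstruction you propose does not exist. You want to argue that $\bigcup_n F_nV_nE_n$ is ``too meagre, or too orbit-poor'' to equal $G$, citing that the orbit-counting sequence of an infinite oligomorphic group on injective $n$-tuples tends to infinity. That last claim is false: $S_\infty$ itself is oligomorphic and acts transitively on injective $n$-tuples for every $n$, so the count is constantly $1$. Baire category also gives nothing, since each $F_nV_nE_n$ is open and hence non-meagre; countably many open sets can perfectly well cover a Polish group, and indeed $G=\bigcup_n g_nV_n$ is always achievable for one-sided translates of a fixed open subgroup by separability. You do correctly sense that ``a diagonalisation against all possible finite $F_n,E_n$'' is needed, but the diagonalisation is the entire content of the proof and you have not supplied its mechanism. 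The paper reduces $\bigcup_n F_nV_nE_n$ to a union $\bigcup_i g_iG_{B_i}$ of single two-sided translates of stabilisers of growing finite sets (via $fG_{A}h\inv=fh\inv G_{hA}$), and then constructs by induction a sequence $f_i\in f_{i-1}G_{A_{i-1}}$ converging in $G$ to a point avoiding every $g_iG_{B_i}$. The two essential ingredients you are missing are: (i) the finiteness of the algebraic closure $\acl A$ (a consequence of oligomorphicity), which lets one arrange, via P.~M.~Neumann's lemma, an element $h\in G_{A_i}$ moving a prescribed finite set entirely off itself outside $A_i$ — this is what forces $f_{i+1}G_{A_{i+1}}$ to miss the relevant $g_jG_{B_j}$; and (ii) the bookkeeping ($i\in A_i$ and $i\in f_iA_i$) that makes the sequence $(f_i)$ two-sided Cauchy, hence convergent in $G$. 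Without (ii) the diagonal sequence need not converge, and without (i) there is no way to force avoidance while staying inside the previous coset.

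For the affine action, your proposed cocycle $b_n(g)=\1_{gV_n}-\1_{V_n}$ is, as you yourself note, a coboundary: $b_n(g)=\pi(g)\xi-\xi$ with $\xi=\1_{V_n}$. Consequently the affine action on each $X_n$ has the fixed point $-\1_{V_n}$, and after the rescaling needed to make the diagonal cocycle land in an $\ell^2$- or $c_0$-sum, the sum action has a global fixed point $\xi_0$. Taking every $C_n=\{\xi_0\}$ then gives ${\rm dist}(gC_n,C_n)=0$ for all $g$, so no choice of $\eps_n>0$ can work: the construction cannot be strongly compacta moving. The point is that a bounded cocycle is useless here; one needs the cocycle to be unbounded and, more precisely, to separate points of $G$ at the scale of a left-invariant metric. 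The paper achieves this with the Arens--Eells space $\AE(G)$ over $(G,d)$, $d$ left-invariant, acting by $\rho_e(g)m=m(g\inv\,\cdot\,)+m_{ge,e}$, where $\norm{m_{ge,e}}=d(ge,e)$ is unbounded; the passage from compact sets $C_n$ to finite sets $F_n,E_n\subseteq G$ is then made via a Lipschitz-function duality estimate (Lemma~\ref{pestov}) showing that if $g$ moves the finite support data of $m_1$ far from that of $m_2$ in $(G,d)$, then $\norm{\rho_e(g)m_1-m_2}$ is large. Your instinct that compactness reduces matters to finite data is right, but the target space must be one in which ``far in $G$'' translates into ``far in $X$'', which $\ell^2(G/V_n)$ with a coboundary cocycle does not provide.
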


Here a closed subgroup $G\leqslant S_\infty$ is said to be {\em oligomorphic} if, for every $n\geqslant 1$, $G$ induces only finitely many distinct orbits on $\N^n$. By a classical theorem of model theory, up to isomorphism these are exactly the automorphism groups of countable $\aleph_0$-categorical structures, e.g., $S_\infty$, ${\rm Aut}(\Q,<)$ and the homeomorhism group of Cantor space ${\rm Homeo}(2^\N)$ among many other.

\begin{thm}
Suppose $G$ is a non-compact, Roelcke precompact Polish group. Then there is a neighbourhood basis at $1$, $V_0\supseteq V_1\supseteq \ldots \ni 1$ such that for any $h_n\in G$ and finite $F_n\subseteq G$,
$$
G\neq \bigcup_n F_nV_nh_n.
$$
\end{thm}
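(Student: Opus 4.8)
The plan is to extract from the hypotheses one structural fact — that $G$ has no totally bounded neighbourhood of $1$ — and then to build the basis $(V_n)$ together with a point of $G\setminus\bigcup_nF_nV_nh_n$ by a back‑and‑forth construction modelled on the case $G\leqslant S_\infty$, where $V_n$ is the pointwise stabiliser of $\{0,\dots,n\}$, the condition $g\in fV_nh$ says that $g$ agrees on the $n$‑element set $h\inv\{0,\dots,n\}$ with a prescribed injection, and one beats $\bigcup_nF_nV_nh_n$ by building a permutation value by value while dodging, at each step, the finitely many values forbidden by the finitely many then‑active requirements.

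First I would show: \emph{a Roelcke precompact Polish group with a totally bounded neighbourhood of $1$ is compact}. If $K\ni1$ is closed and totally bounded then $K$ is compact — $G$ is Polish, hence complete in its two‑sided uniformity, and one reduces to total boundedness there by passing to $K\cap K\inv$ — so $G$ is locally compact; taking open $V$ with $1\in V$ and $\overline{V}\subseteq K$ and writing $G=VDV$ with $D$ finite (Roelcke precompactness), $G=\bigcup_{d\in D}\overline{V}d\overline{V}$ is a finite union of compact sets, hence compact. As our $G$ is non‑compact, no neighbourhood of $1$ is totally bounded; unpacking, for every open $W\ni1$ there is an open $W'\ni1$, $W'\subseteq W$, with $W$ covered by no finite union of left translates of $W'$. ($G$ is also perfect, since a discrete Roelcke precompact group is finite.)

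Next, using Roelcke precompactness, I would present $G$ so that small neighbourhoods of $1$ are controlled by finitely much data of unbounded size: realise $G$ as the automorphism group of a separable structure $\mathcal M$ for which the pointwise‑$\epsilon$‑stabilisers $V_{\bar a,\epsilon}=\{g:d(ga_i,a_i)<\epsilon\text{ for all }i\}$ of finite tuples $\bar a$ form a neighbourhood basis at $1$ (automatic for $G\leqslant S_\infty$; for general Roelcke precompact $G$ this is the metric $\aleph_0$‑categoricity of $\mathcal M$), the content of Step 1 being that orbits $Gm$ are not totally bounded and that arbitrarily long tuples $\bar a$ are genuinely needed. Put $V_n:=V_{\bar a_n,\epsilon_n}$ with $|\bar a_n|\to\infty$, $\epsilon_n\to0$, so that $g\in fV_nh_n$ constrains only the action of $g$ on the finite tuple $h_n\inv\bar a_n$. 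Given the adversary's $F_n,h_n$, fix an enumeration $(x_k)$ of a countable dense subset of $\mathcal M$ also listing all points $h_n\inv a_{n,i}$, and build $g$ as the limit of a sequence Cauchy in a complete compatible metric, at stage $k$ pinning down $g(x_k)$ to finer precision. Since $|\bar a_n|\to\infty$ and the entries of $h_n\inv\bar a_n$ are distinct, only finitely many requirements $R_n:\,g\notin F_nV_nh_n$ can have their whole tuple among $\{x_0,\dots,x_k\}$; when the last coordinate of such a tuple is reached, $R_n$ bans only finitely many values for $g(x_k)$ (the $\epsilon_n$‑balls around the corresponding $f(a_{n,i})$, $f\in F_n$), while $g(x_k)$ is otherwise free over a non‑totally‑bounded orbit‑fibre — so a legal value exists. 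Carrying this out coordinate by coordinate produces $g\notin\bigcup_nF_nV_nh_n$.

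The main obstacle is that the adversary's \emph{right} translations $h_n$ distort the left‑invariant geometry: a priori $fV_nh_n$ can be spread out, and for a carelessly chosen basis a single $fV_nh_n$ — with $f$ or $h_n$ chosen against us — could swallow a small commitment and stall the recursion. Forcing the basis into the finite‑tuple‑controlled shape above is what defeats this, and it is here that Roelcke precompactness is essential beyond the mere failure of local total boundedness: $G=V_nD_nV_n$ with $D_n$ finite says that, up to a $V_n$‑sized perturbation on each side, an element of $G$ is one of finitely many, so the ``$V_n$‑type'' of $f\inv gh_n\inv$ is one of finitely many and dodging those banned by $F_n$ reduces to avoiding finitely many points in a non‑totally‑bounded fibre. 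Making the scheduling and dodging go through \emph{uniformly in the adversary's $F_n,h_n$}, with the basis fixed in advance, is the technical heart, and I expect it to absorb most of the work.
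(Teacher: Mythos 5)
Your overall strategy---realise $G$ via Proposition \ref{roelcke} as a group of isometries of a separable complete metric space $(X,d)$ whose basic neighbourhoods $V(\bar a,\eps)$ are controlled by finite tuples, and then defeat $\bigcup_n F_nV_nh_n$ by a recursion that commits to the values of $g$ on finitely many points at a time while dodging finitely many banned balls---is exactly the paper's approach. But the step you flag as ``the technical heart'' and leave unproved is precisely where the theorem lives, and as stated your dodging step does not go through. To satisfy the requirement $g\notin F_nV_nh_n$ you must, for each $f\in F_n$, arrange $d\big(g(h_n\inv a_{n,i}),f(a_{n,i})\big)\geqslant\eps_n$ for \emph{some} coordinate $i$; the set of admissible values for $g(h_n\inv a_{n,i})$ consistent with your prior commitments is the orbit of $h_n\inv a_{n,i}$ under the stabiliser $V(\bar x,\sigma)$ of those commitments, and this orbit can perfectly well be totally bounded or even finite (in the $S_\infty$ picture: $h_n\inv a_{n,i}$ may lie in the algebraic closure of the points already pinned down), in which case it can be entirely swallowed by the $|F_n|$ banned $\eps_n$-balls and no legal value exists. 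Your phrase ``otherwise free over a non-totally-bounded orbit-fibre'' assumes away exactly this. Note also that the adversary's $h_n$ is chosen \emph{after} the basis is fixed, so you cannot arrange in advance which points the tuples $h_n\inv\bar a_n$ will be.

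The paper's resolution is quantitative and uses Roelcke precompactness a second time, beyond the mere failure of local total boundedness: Lemmas \ref{covering} and \ref{uniform covering} show that the ``bad set'' $\B(\bar x,\sigma,\delta)$ of points whose $V(\bar x,\sigma)$-orbit is covered by finitely many $\delta$-balls can be covered by $k$ many $\eps$-balls, with $k$ \emph{uniform over all} $\bar x\in X^n$. One then takes the defining tuple $\bar y_n$ of $V_n$ to consist of $k+1$ pairwise $2\eps$-separated points (possible since $X$ is non-compact); as $h_n\inv$ is an isometry, at least one coordinate of $h_n\inv\bar y_n$ escapes $\B(\bar x,\sigma,2\delta)$ no matter what $\bar x$ and $h_n$ are, and that coordinate is where the dodge happens (Lemma \ref{avoid}). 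Without this uniform covering estimate---which is where approximate oligomorphy enters essentially---your recursion can stall. A secondary gap: the limit of your Cauchy sequence of partial isometries need only be an isometric embedding of $X$, not an element of $G$; the paper forces surjectivity by feeding the points $z_i$ and $(g_1\cdots g_i)\inv(z_i)$ into the tuples $\bar x_{i+1}$, and you would need some analogous device.
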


Again, the class of Roelcke precompact Polish groups is surprisingly large despite of being even more restrictive those with property (OB). As shown in Proposition \ref{roelcke} extending work of \cite{OB, tsankov}, the Roelcke precompact Polish groups are exactly those that can realised as approximately oligomorhic groups of isometries. This criterion immediately gives us the following range of examples, ${\rm Aut}([0,1],\lambda)$ (see \cite{glassner} for an independent proof), $\ku U(\ell_2)$, and less obviously, ${\rm Isom}(\U)$ \cite{OB}, where $\U$ denotes the so called Urysohn metric space.

We should also mention that though we mainly consider Polish groups, many of our results are valid with only trivial modifications for arbitrary topological groups. However, to avoid complications and to get the cleanest statements possible, we have opted for this more restrictive setting, which nevertheless already includes most of the groups appearing in analysis.

The paper is organised as follows: In Sections \ref{uniformities}, \ref{affine} and \ref{topologies} we present some background material on uniformities on topological groups and general constructions of affine and linear representations on Banach spaces. Almost all of the material there is well-known, but sets the stage for several of the constructions used later on.   Sections 1.4--1.10 contains the core study of the various boundedness properties and their consequences. In Sections 2.1 and 2.2, we answer Solecki's question on the possible characterisation of locally compact Polish groups. And finally, in Sections 3.1--3.5 we study the covering properties of neighbourhood bases in Polish groups, which leads to constructions of affine isometric actions with interesting local dynamics.


\subsection{Uniformities and compatible metrics}\label{uniformities}
Recall that a {\em uniform space} is a tuple $(X,\ku E)$, where $X$ is a set and $\ku E$ is a collection of subsets of $X\times X$, called {\em entourages} of the diagonal $\Delta=\{(x,x)\in X\times X\del x\in X\}$, satisfying
\begin{enumerate}
\item $\Delta\subseteq V$ for any $V\in \ku E$,
\item $\ku E$ is closed under supersets, i.e., $V\subseteq U$ and $V\in \ku E$ implies $U\in \ku E$,
\item $V\in \ku E$ implies that also $V\inv=\{(y,x)\in X\times X\del (x,y)\in V\}\in \ku E$,
\item $\ku E$ is closed under finite intersections, i.e., $V,U\in \ku E$ implies that $V\cap U\in \ku E$,
\item for any $V\in \ku E$ there is $U\in \ku E$ such that 
$$
U^2=U\circ U=\{(x,y)\in X\times X\del \e z\in X\; (x,z)\in U\; \&\; (z,y)\in U\}\subseteq V.
$$
\end{enumerate}

The basic example of a uniform space is the case when $(X,d)$ is a metric space (or just pseudometric)  and we let $\ku B$ on $X$ denote the family of  sets 
$$
V_\eps=\{(x,y)\in X\times X\del d(x,y)<\eps\},
$$
for $\eps>0$. Closing $\ku B$ under supersets, one obtains a uniformity $\ku E$ on $X$, and we say that $\ku B=\{V_\eps\}_{\eps>0}$ forms a {\em fundamental system} for $\ku E$, meaning that any entourage contains a subset belonging to $\ku B$.

Conversely, if $(X,\ku E)$ is a uniform space, then $\ku E$ generates a unique topology on $X$ by declaring the vertical sections of entourages at $x$, i.e., $V[x]=\{y\in X\del (x,y)\in V\}$, to form a neighbourhood basis at $x\in X$.

A net $(x_i)$ in $X$ is said to be {\em $\ku E$-Cauchy} provided that for any $V\in \ku E$ we have $(x_i,x_j)\in V$ for all sufficiently large $i,j$. And $(x_i)$ {\em converges} to $x$ if, for any $V\in \ku E$, we have $(x_i,x)\in V$ for all sufficiently large $i$. Thus, $(X,\ku E)$ is {\em complete} if any $\ku E$-Cauchy net converges in $X$.

Similarly, $(X,\ku E)$ is {\em precompact} if for any $V\in \ku E$ there is a finite set $F\subseteq X$ such that 
$$
X=V[F]=\{y\in X\del \e x\in F\;\; (x,y)\in V\}.
$$
That is, $X$ is a union of finitely many  vertical sections $V[x]$ of $V$.

An {\em \'ecart} or {\em pseudometric} on a set $X$ is a symmetric function $d\colon X\times X\til \R_{\geqslant 0}$ satisfying the triangle inequality, $d(x,y)\leqslant d(x,z)+d(z,y)$, and such that $d(x,x)=0$. The Birkhoff-Kakutani theorem states that if $\Delta\subseteq U_n\subseteq X\times X$ is a decreasing sequence of symmetric sets  satisfying 
$$
U_{n+1}\circ U_{n+1}\circ U_{n+1}\subseteq U_n
$$
and we define $\delta, d\colon X\times X\til \R_{\geqslant 0}$ by
$$
\delta(x,y)=\inf \;\{2^{-n}\del (x,y)\in U_n\}
$$
and 
$$
d(x,y)=\inf\big \{\sum_{i=1}^n\delta(x_{i-1},x_i)\del x_0=x, \;x_n=y\big\},
$$
then $d$ is an \'ecart on $X$ with 
$$
\frac 12\delta(x,y)\leqslant  d(x,y)\leqslant \delta(x,y).
$$
In other words, if the $V_\eps$ are defined as above, then $V_{2^{-(n+1)}}\subseteq U_{n}\subseteq V_{2^{-n}}$
and thus the two families $\{U_n\}_{n\in \N}$ and $\{V_\eps\}_{\eps>0}$ are fundamental systems for the same uniformity on $X$. In particular, this shows that any uniformity with a countable fundamental system can be induced by an \'ecart on $X$.

Now, if $G$ is a topological group, it naturally comes with four uniformities, namely, the {\em two-sided},  {\em left},  {\em right} and {\em Roelcke} uniformities denoted respectively $\ku E_{ts}, \ku E_l,\ku E_r$ and $\ku E_R$. These are the uniformities with fundamental systems given by respectively
\begin{enumerate}
\item $E^{ts}_W=\{(x,y)\in G\times G\del x\inv y\in W\;\&\; xy\inv \in W\}$,
\item  $E^{l}_W=\{(x,y)\in G\times G\del x\inv y\in W\}$,
\item  $E^{r}_W=\{(x,y)\in G\times G\del xy\inv\in W\}$,
\item  $E^{R}_W=\{(x,y)\in G\times G\del y\in WxW\}$,
\end{enumerate}
where $W$ varies over symmetric neighbourhoods of $1$ in $G$. Since clearly, $E^{ts}_W\subseteq E^{l}_W\subseteq E^{R}_W$ and $E^{ts}_W\subseteq E^{r}_W\subseteq E^{R}_W$, we see that $\ku E_{ts}$ is finer that both $\ku E_{l}$ and $\ku E_{r}$, while $\ku E_{R}$ is coarser than all of them. In fact, in the lattice of uniformities on $G$, one has $\ku E_{ts}=\ku E_{l}\vee \ku E_{r}$ and $\ku E_{R}=\ku E_{l}\wedge \ku E_{r}$.

Though these uniformities are in general distinct, they all generate the original topology on $G$. This can be seen by noting first that for any symmetric open $W\ni1$ and $x\in G$, one has $\ku E_W^{ts}[x]=xW\cap Wx$, which is a neighbourhood of $x$ in $G$, and thus the topology generated by $\ku E_{ts}$ is coarser than the topology on $G$. Secondly, if $U$ is an open neighbourhood of $x$ in $G$, then there is a symmetric open $W\ni 1$ such that $E^R_W[x]=WxW\subseteq U$, whence the Roelcke uniformity generates a topology as fine as the  topology on $G$.

Note also that if $G$ is first countable, then each of the above uniformities have countable fundamental systems and thus are induced by \'ecarts $d_{ts}, d_l, d_r$ and $d_R$. It follows that each of these induce the topology on $G$ and hence in fact must be metrics on $G$. Moreover, since the sets $E^l_W$ are invariant under multiplication on the left, the uniformity $\ku E_l$ has a countable fundamental system of left-invariant sets, implying that the metric $d_l$ can be made left-invariant. Similarly, $d_r$ can be made right-invariant and, in fact, one can set $d_r(g,h)=d_l(g\inv, h\inv)$. Moreover, since $\ku E_{ts}=\ku E_l\vee \ku E_r$, one sees that $d_l+d_r$ is a compatible metric for the uniformity $\ku E_{ts}$ and thus one can choose $d_{ts}=d_l+d_r$.

A topological group is said to be {\em Raikov} complete if it is complete with respect to the two-sided uniformity and {\em Weil} complete if complete with respect to the left uniformity. This is equivalent to the completeness of the metrics $d_{ts}$ and $d_l$ respectively. Polish groups are always Raikov complete. On the other hand, Weil complete Polish groups are by the above exactly those that admit a compatible, complete, left-invariant metric, something that fails in general.

A function $\varphi\colon X\til \R$ defined on a uniform space $(X,\ku E)$ is {\em uniformly continuous} if for any $\eps>0$ there is some $V\in \ku E$ such that
$$
(x,y)\in V\saa |\varphi(x)-\varphi(y)|<\eps.
$$

The following lemma is well-known (see, e.g., Theorem 1.14 \cite{hejcman} and Theorem 2.4 \cite{atkin}), but we include the simple proof for completeness. 
\begin{lemme}\label{bounded uniformity}
Let $(X,\ku E)$ be a uniform space. Then any uniformly continuous function $\varphi\colon X\til \R$ is bounded if and only if for any $V\in \ku E$ there is a finite set $F\subseteq X$ and an $n$  such that $X=V^n[F]$.
\end{lemme}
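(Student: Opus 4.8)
The plan is to prove the two implications separately; the content lies in manufacturing an unbounded uniformly continuous function from a failure of the covering condition. For the direction ``covering condition $\Rightarrow$ every uniformly continuous function is bounded'', I would take a uniformly continuous $\varphi\colon X\til\R$ and apply uniform continuity with $\eps=1$ to get $V\in\ku E$ with $|\varphi(x)-\varphi(y)|<1$ whenever $(x,y)\in V$, and let finite $F\subseteq X$ and $n\in\N$ satisfy $X=V^n[F]$. Each $x\in X$ is joined to some $z\in F$ by a chain $z=y_0,y_1,\dots,y_n=x$ with every $(y_{l-1},y_l)\in V$, so $|\varphi(x)|\leqslant\max_{z\in F}|\varphi(z)|+n$, and $\varphi$ is bounded since $F$ is finite.

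For the converse, suppose $V\in\ku E$ witnesses failure: $V^n[F]\neq X$ for all finite $F\subseteq X$ and all $n$. Since $W\subseteq V$ gives $W^n[F]\subseteq V^n[F]\neq X$, the same failure holds for every entourage $W\subseteq V$; so, replacing $V$ by $V\cap V\inv$ and iterating the uniform space axioms, I would fix symmetric entourages $V=V_0\supseteq V_1\supseteq V_2\supseteq\cdots$ with $V_{k+1}\circ V_{k+1}\circ V_{k+1}\subseteq V_k$, the covering condition failing for each $V_k$. Let $E=\bigcup_nV_0^n\in\ku E$ be the equivalence relation ``joined by a $V_0$-chain''. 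If $X$ has infinitely many $E$-classes, pick a sequence of distinct ones and let $\varphi$ take the constant value $m$ on the $m$-th chosen class and $0$ elsewhere: then $\varphi$ is constant on $E$-classes, hence uniformly continuous, and unbounded, and we are done. So assume there are only finitely many $E$-classes, say $C_1,\dots,C_r$.

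Now apply the Birkhoff--Kakutani construction to $(V_k)_{k\in\N}$ inside each $C_i$ (the entourages $V_k\cap(C_i\times C_i)$ still satisfy the cube condition): it yields an \'ecart $d_i$ on $C_i$ which is the infimum of $\delta$-lengths of finite chains and satisfies $\tfrac12\delta\leqslant d_i\leqslant\delta$ there, where $\delta(x,y)=\inf\{2^{-k}\mid(x,y)\in V_k\}$ is finite on $C_i$. Fix $x_i\in C_i$ and define $\varphi\colon X\til\R$ by $\varphi(x)=d_i(x_i,x)$ for $x\in C_i$. This is uniformly continuous: given $\eps>0$ pick $k$ with $2^{-k}<\eps$; if $(x,y)\in V_k$ then $x,y$ share some $C_i$ and $|\varphi(x)-\varphi(y)|\leqslant d_i(x,y)\leqslant\delta(x,y)\leqslant 2^{-k}<\eps$. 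To see $\varphi$ is unbounded, suppose toward a contradiction that $\varphi\leqslant D<\infty$. Fix $x\in C_i$ and a chain $x_i=y_0,\dots,y_m=x$ in $C_i$ with $\sum_{l=1}^m\delta(y_{l-1},y_l)<D+1$, and chop it into at most $T$ consecutive blocks, $T$ depending only on $D$, each being either a single step or a run of steps of total $\delta$-length $<\tfrac12$; this is possible because at most $4(D+1)$ steps have $\delta\geqslant\tfrac14$, and the remaining smaller steps can be greedily packed into $O(D)$ runs of total $\delta$-length $<\tfrac12$. The endpoints of a single-step block $(y_{l-1},y_l)$ lie in $V_0$, since $\delta(y_{l-1},y_l)<\infty$ forces $(y_{l-1},y_l)\in V_k$ for some $k$; and for a block of total $\delta$-length $\sigma<\tfrac12$, the subchain through it shows $d_i(\text{endpoints})\leqslant\sigma$, so $\delta(\text{endpoints})\leqslant 2\sigma<1$ and the endpoints lie in $V_1\subseteq V_0$. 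Composing over the $\leqslant T$ blocks gives $(x_i,x)\in V_0^T$, so $C_i=V_0^T[x_i]$; then $X=\bigcup_{i\leqslant r}C_i=V_0^T[\{x_1,\dots,x_r\}]$, contradicting the failure of the covering condition for $V_0$. Hence $\varphi$ is an unbounded uniformly continuous function, as required. The delicate point is exactly this last step: since the covering condition for $V_0$ fails at arbitrarily large exponents one expects $\varphi$ unbounded, but converting ``$\varphi$ bounded'' back into a single finite covering exponent requires breaking a long $\delta$-chain into boundedly many short pieces and using the Birkhoff--Kakutani inequality $\delta\leqslant 2d_i$ to collapse each short piece into one $V_0$-step.
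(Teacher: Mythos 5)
Your proof is correct, but in the hard direction it is organised differently from the paper's. The paper splits according to whether some point $x$ has $V^n[x]\subsetneq V^{n+1}[x]$ for every $n$ (in which case it extends $(V^{3n})_{n\geqslant 1}$ \emph{upwards} to a bi-infinite sequence $(U_n)_{n\in\Z}$ and reads off unboundedness of $d(x,\cdot)$ directly from $y\notin U_n[x]$, since large distances are encoded by the large entourages $U_n$), or whether every $V$-component stabilises, $V^{n_x}[x]=V^{n_x+1}[x]$ (in which case the components are pairwise disjoint, infinitely many by the covering failure, and a locally constant function works). You instead split on whether the number of $V_0$-chain components is infinite or finite; your infinite case coincides with the paper's second case, but your finite case replaces the bi-infinite extension by a purely downward sequence $V_0\supseteq V_1\supseteq\cdots$ together with the combinatorial chopping of a $\delta$-chain of bounded length into $O(D)$ blocks, each of which collapses to a single $V_0$-step. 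What the paper's route buys is a shorter unboundedness argument (no chopping lemma); what yours buys is that you never need entourages larger than $V_0$. One caveat on your write-up: the assertions that $\delta$ is finite on $C_i$ and that $\tfrac12\delta\leqslant d_i$ holds globally on $C_i$ are both false in general --- two points in the same $V_0$-chain component need not lie in any $V_k$, in which case $\delta=\infty$ while $d_i<\infty$, and the Birkhoff--Kakutani lower bound for a one-sided sequence requires $V_0=X\times X$ (or a bi-infinite family) to hold at all scales. This does not damage the proof, because you only invoke $\delta\leqslant 2d_i$ for the endpoints of a block of total $\delta$-length $\sigma<\tfrac12$, where it is exactly the standard chain lemma ($\sum\delta<2^{-k}$ implies the endpoints lie in $V_k$); but you should state it in that local form rather than as a global inequality on $C_i$.
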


\begin{proof}
Suppose first that for any $V\in \ku E$ there is a finite set such that $X=V^n[F]$ and that  $\varphi\colon X\til \R$ is uniformly continuous. Fix $V\in \ku E$ such that $|\varphi(x)-\varphi(y)|<1$ whenever $(x,y)\in V$ and pick a corresponding finite set $F\subseteq X$. Then for any $z\in X$ there are $y_0,\ldots, y_n$ such that $(y_i,y_{i+1})\in V$ and $y_0\in F$, $y_n=z$, whence
\[\begin{split}
|\varphi(x)-\varphi(z)|&=|\varphi(y_0)-\varphi(y_n)|\\
&\leqslant |\varphi(y_0)-\varphi(y_1)|+|\varphi(y_1)-\varphi(y_2)|+\ldots+|\varphi(y_{n-1})-\varphi(y_n)|\\
&<  n.
\end{split}\]
Since $F$ is finite, it follows that $\varphi$ is bounded.

Suppose conversely that $V\in \ku E$ is a symmetric set such that for all $n$ and finite $F\subseteq X$, $X\neq V^n[F]$. Assume first that there is some $x\in X$ such that $V^n[x]\subsetneq V^{n+1}[x]$ for all $n\geqslant 1$ and extend $(V^{3n})_{n\geqslant 1}$ to a bi-infinite sequence $(U_n)_{n\in \Z}$ of symmetric sets in $\ku E$ such that $U_n^3\subseteq U_{n+1}$ for all $n\in \Z$. Defining $\delta,d\colon X\times X\til \R_{\geqslant 0}$ by 
$$
\delta(x,y)=\inf \;\{2^{n}\del (x,y)\in U_n\}
$$
and 
$$
d(x,y)=\inf\big \{\sum_{i=1}^n\delta(x_{i-1},x_i)\del x_0=x, \;x_n=y\big\},
$$
as in the Birkhoff-Kakutani theorem, we get that $\frac 12\delta\leqslant  d\leqslant\delta$. Moreover, $\varphi(y)=d(x,y)$ defines a uniformly continuous function on $X$. To see that $\varphi$ is unbounded, for any $n$ it suffices to pick some $y\notin V^{3n}[x]=U_n[x]$, i.e., $(x,y)\notin U_n$ and thus $\varphi(y)\geqslant \frac 12\delta(x,y)>2^n$. 

Suppose, on the other hand, that for any $x\in X$ there is some $n_x$ such that $V^{n_x}[x]=V^{n_x+1}[x]$. Then, by the symmetry of $V$, for any $x,y$ either $V^{n_x}[x]=V^{n_y}[y]$ or $V^{n_x}[x]\cap V^{n_y}[y]=\tom$. Picking inductively $x_1,x_2,\ldots$ such that $x_{k+1}\notin V^{n_{x_1}}[x_1]\cup\ldots\cup V^{n_{x_k}}[x_k]$, the $V^{n_{x_k}}[x_k]$ are all disjoint and we can therefore let $\varphi$ be constantly equal to $k$ on $V^{n_{x_k}}[x_k]$ and $0$ on $X\setminus \bigcup_{k\geqslant 1}V^{n_{x_k}}[x_k]$. Then $\varphi$ is unbounded but uniformly continuous.
\end{proof}


\subsection{Constructions of linear and affine actions on Banach spaces}\label{affine}
Fix a non-empty set $X$ and let $c_{00}(X)$ denote the vector space of finitely supported functions $\xi\colon X\til \R$. The subspace $\M(X)\subseteq c_{00}(X)$ consists of all $m\in c_{00}(X)$ for which 
$$
\sum_{x\in X}m(x)=0.
$$
Alternatively, $\M(X)$ is the hyperplane in $c_{00}(X)$ given as the kernel of the functional $m\mapsto \sum_{x\in X}m(x)$.
The elements of $\M(X)$ are called {\em molecules} and basic among these are the {\em atoms}, i.e., the molecules of the form
$$
m_{x,y}=\delta_x-\delta_y,
$$
where $x,y\in X$ and $\delta_x$ is the Dirac measure at $x$. As can easily be seen by induction on the cardinality of its support, any molecule $m$ can be written as a finite linear combination of atoms, i.e.,  
$$
m=\sum_{i=1}^na_im_{x_i,y_i},
$$
for some $x_i,y_i\in X$ and $a_i\in \R$.

Also, if $G$ is a group acting on $X$, one obtains an action of $G$ on $\M(X)$ by linear automorphisms, i.e., a linear representation $\pi\colon G\til \GL{\M(X)}$, by setting 
$$
\pi(g) m=m(g\inv\;\cdot\;),
$$
whence
$$
\pi(g)\big( \sum_{i=1}^na_im_{x_i,y_i}\big)=\sum_{i=1}^na_im_{gx_i,gy_i}
$$ 
for any molecule $m=\sum_{i=1}^na_im_{x_i,y_i}\in \M(X)$ and $g\in G$.

Suppose now $Z$ is an $\R$-vector space and consider the group $\Aff Z$ of affine automorphisms of $Z$. This splits as a semidirect product
$$
\Aff Z=\GL Z\ltimes Z,
$$
that is, $\Aff Z$ is isomorphic to the Cartesian product $\GL Z \times Z$ with the group multiplication 
$$
(T,x)*(S,y)=(TS,x+Ty).
$$
Equivalently, the action of $(T,x)$ on $Z$ is given by $(T,x)(z)=Tz+x$.
Therefore, if $\rho\colon G\til \Aff Z$ is a homomorphism from a group $G$, it decomposes as $\rho=\pi\times b$, where $\pi\colon G\til  \GL Z$ is a homomorphism and $b\colon G\til Z$ satisfies the {\em cocycle relation}
$$
b(gh)=b(g)+\pi(g)\big(b(h)\big).
$$
In this case, we say that $b$ is a {\em cocycle} associated to $\pi$ and note that the affine action of $G$ on $Z$ corresponding to $\rho$ has a fixed point on $Z$ if and only if $b$ is a {\em coboundary}, i.e., if there is some $x\in Z$ such that $b(g)= \pi(g)x-x$.

Returning to our space of molecules, suppose $G$ acts on the set $X$ and let $\pi\colon G\til \GL{\M(X)}$  denote the linear representation of $G$ given by $\pi(g)m=m(g\inv\;\cdot\;)$. Now, for any point $e\in X$, we let $\phi_e\colon X\til \M(X)$ be the injection defined by 
$$
\phi_e(x)=m_{x,e}
$$
and construct a cocycle $b_e\colon G\til \M(X)$ associated to $\pi$ by setting 
$$
b_e(g)=m_{ge,e}.
$$
To verify the cocycle relation, note that for $g,h\in G$
$$
b_e(gh)=m_{ghe,e}=m_{ge,e}+m_{ghe,ge}=b_e(g)+\pi(g)\big(b_e(h)\big).
$$
We let $\rho_e\colon G\til \Aff {\M(X)}$ denote the corresponding affine representation $\rho_e=\pi\times b_e$ of $G$ on $\M(X)$.

With these choices, it is easy to check that for any $g\in G$ the following diagram commutes.
$$
\begin{CD}
X @>g>> X\\
@V\phi_eVV   @VV\phi_eV\\
\M(X)@>>\rho_e(g)> \M(X)
\end{CD}
$$
Indeed, for any $x\in X$,
$$
\big(\rho_e(g)\circ \phi_e\big)(x)=\rho_e(g)(m_{x,e})=\pi(g)(m_{x,e})+b_e(g)=m_{gx,ge}+m_{ge,e}=m_{gx,e}
=\big(\phi_e\circ g\big)(x).
$$


Now, if $\psi$ is a {\em non-negative kernel} on $X$, that is, a function $\psi\colon X\times X\til \R_{\geqslant 0}$, one can define a pseudonorm on $\M(X)$, by the formula
$$
{\norm {m}}_\psi=\inf\big( \sum_{i=1}^n |a_i| \psi(x_i,y_i) \del m=\sum_{i=1}^na_im_{x_i,y_i}\big).
$$

Thus, if $\psi$ is $G$-invariant, one sees that $\pi\colon G\til \GL{\M(X)}$ corresponds to an action by linear isometries on $(\M(X),\norm\cdot_\psi)$ and so the action extends to an isometric action on the completion of $\M(X)$ with respect to $\norm\cdot_\psi$. On the other hand, if $\psi$ is no longer $G$-invariant, but instead satisfies
$$
\psi(gx,gy)\leqslant K_g\psi(x,y)
$$
for  all $x,y\in X$ and some constant $K_g$ depending only on $g\in G$, then every operator $\pi(g)$ is bounded, $\norm{\pi(g)}_\psi\leqslant K_g$, and so again the action of $G$ extends to an action by bounded automorphisms on the completion of $(\M(X),\norm\cdot_\psi)$.

A special case of this construction is when $\psi$ is a metric $d$ on $X$, in which case we denote the resulting {\em Arens-Eells} norm by $\norm\cdot_{\AE}$ instead of $\norm\cdot_\psi$. An easy exercise using the triangle inequality shows that in the computation of the norm by 
$$
{\norm {m}}_{\AE}=\inf\big( \sum_{i=1}^n |a_i| d(x_i,y_i) \del m=\sum_{i=1}^na_im_{x_i,y_i}\big),
$$
the infimum is attained at some presentation $m=\sum_{i=1}^na_im_{x_i,y_i}$ where $x_i$ and $y_i$ all belong to the support of $m$.
Moreover, as is well-known (see, e.g., \cite{weaver}), the norm is equivalently computed by
$$
\norm m_{\AE}=\sup\big( \sum_{x\in X}m(x)f(x)\del f\colon X\til \R \text{ is $1$-Lipschitz }\big),
$$
and so, in particular, $\norm{m_{x,y}}_{\AE}=d(x,y)$ for any $x,y\in X$.

We denote the completion of $\M(X)$ with respect to $\norm\cdot_{\AE}$ by $\AE(X)$, which we call the {\em Arens-Eells} space of $(X,d)$. 
It is not difficult to verify that the set of molecules that are rational linear combinations of atoms with support in a dense subset of $X$ is dense in $\AE(X)$ and thus, provided $X$ is separable,  $\AE(X)$ is a separable Banach space.   A fuller account of the Arens-Eells space can be found in the book by N. Weaver \cite{weaver}.

Now, recall that by the Mazur--Ulam Theorem, any surjective isometry between two Banach spaces is affine and hence, in particular, the group of all isometries of a Banach space $Z$ coincides with the group of all affine isometries of $Z$. To avoid confusion, we shall denote the latter by $\aff Z$ and let $\lin Z$ be the subgroup consisting of all linear isometries of $Z$. By the preceding discussion, we have $\aff Z=\lin Z\ltimes Z$.


\subsection{Topologies on transformation groups}\label{topologies}
Recall that if $X$ is a Banach space and ${\bf B}(X)$ the algebra of bounded linear operators on $X$, the {\em strong operator topology} (SOT) on ${\bf B}(X)$ is just the topology of pointwise convergence on $X$, that is, if $T_i, T\in {\bf B}(X)$, we have  
$$
T_i\Lim{\scriptsize{\rm SOT}} T\;\equi\; \norm{T_ix- Tx}\til 0 \text{ for all }x\in X.
$$
In general, the operation of composition of operators is not strongly (i.e., SOT) continuous, but if one restricts it to a norm bounded subset of ${\bf B}(X)$ it will be. 

Similarly, if we restrict to a norm bounded subset of $\GL X \subseteq {\bf B}(X)$, then inversion $T\mapsto T\inv$ is also strongly continuous and so, in particular, $\lin X\subseteq \GL X$ is a topological group with respect to the strong operator topology. 
In fact, provided $X$ is separable, $\lin X$ is a Polish group in the strong operator topology.

Recall that an action $G\curvearrowright X$ of a topological group $G$ on a topological space $X$ is {\em continuous} if it is jointly continuous as a map from $G\times X$ to $X$. Now, as can easily be checked by hand, if $G$ acts by isometries on a metric space $X$, then joint continuity of $G\times X\til X$ is equivalent to the map $g\in G\mapsto gx\in X$ being continuous for every $x\in X$.

Thus, an action of a topological group $G$ by linear isometries on a Banach space $X$ is continuous if and only if the corresponding representation $\pi\colon G\til \lin X$ is strongly continuous, i.e., if it is continuous with respect to the strong operator topology on $\lin X$.

Since $\GL X$ is not in general a topological group in the strong operator topology, one has to be a bit more careful when dealing with not necessarily isometric representations. 

Assume  first that $\pi\colon G\til \GL X$ is a representation of a Polish group by bounded automorphisms of a Banach space $X$ such that the corresponding action $G\curvearrowright X$ is continuous. We claim that $\norm{\pi(g)}$ is bounded 
in a neighbourhood $U$ of the identity $1$ in $G$. For if not, we could find $g_n\til 1$ such that $\norm{\pi(g_n)}>n^2$ and so for some $x_n\in X$, $\norm{x_n}=1$, we have  $\norm{\pi(g_n)x_n}>n^2$. But then $z_n=\frac {x_n}n\til 0$, while $\norm{\pi(g_n)z_n}>n$, contracting that $\pi(g_n)z_n\til \pi(1)0=0$ by continuity of the action.  Moreover, $\pi$ is easily seen to be strongly continuous.

Conversely, assume that $\pi\colon G\til \GL X$ is a strongly continuous representation such that $\norm{\pi(g)}$ is bounded by a constant $K$ in some neighbourhood $U\ni 1$. Then, by strong continuity of $\pi$, if $\eps>0$, $x\in X$ and $g\in G$ are given, we can find a neighbourhood $V\ni 1$ such that $\norm{\pi(vg)x-\pi(g)x}<\eps/2$ for $v\in V$. It follows that if $\norm{y-x}<\frac{\eps}{2K\norm{\pi(g)}}$ and $v\in V\cap U$, then
\[\begin{split}
\norm{\pi(vg)y-\pi(g)x}&\leqslant \norm{\pi(vg)y-\pi(vg)x}+\norm{\pi(vg)x-\pi(g)x}\\
&\leqslant \norm{\pi(v)}\norm{\pi(g)}\norm{y-x}+\eps/2\\
&\leqslant \eps,
\end{split}\]
showing that the action is continuous.

Therefore, a representation $\pi\colon G\til \GL X$ corresponds to a continuous action $G\curvearrowright X$ if and only if (i) $\pi$ is strongly continuous and (ii) $\norm{\pi(g)}$ is bounded in a neighbourhood of $1\in G$. For simplicity, we shall simply designate this by {\em continuity of the representation} $\pi$.

Similarly, a representation $\rho\colon G\til \Aff X$ by continuous affine transformations of $X$ corresponds to a continuous action of $G$ on $X$ if and only if both the corresponding linear representation $\pi\colon G\til \GL X$ and the cocycle $b\colon G\til X$ are continuous.


\subsection{Property (OB)}\label{property (OB)}
Our first boundedness property is among the weakest of those studied and originated in \cite{OB} as a topological analogue of a purely algebraic property initially investigated by G. M. Bergman \cite{bergman}.

\begin{defi}
A topological group $G$ is said to have {\em property (OB)} if whenever $G$ acts continuously by isometries on a metric space, every orbit is bounded.
\end{defi}
Since continuity of an isometric action of $G$ is equivalent to continuity of the maps $g\mapsto gx$ for all $x\in X$,  property (OB) for $G$ can be reformulated as follows: Whenever $G$ acts by isometries on a metric space $(X,d)$, such that for every $x\in X$ the map $g\mapsto gx$ is continuous, every orbit is bounded.

We recall some of the equivalent characterisations of property (OB) for Polish groups, a few of which were shown in  \cite{OB}.

\begin{thm}\label{OB}
Let $G$ be a Polish group. Then the following conditions are equivalent.
\begin{enumerate}
\item $G$ has property (OB),
\item whenever $G$ acts continuously by affine isometries on a separable Banach space, every orbit is bounded,
\item any continuous linear representation $\pi\colon G\til \GL X$ on a separable Banach space is bounded, i.e., $\sup_{g\in G}\norm{\pi(g)}<\infty$,

\item whenever $W_0\subseteq W_1\subseteq W_2\subseteq\ldots \subseteq G$ is an exhaustive sequence of open subsets, then $G=W_n^k$ for some $n,k\geqslant 1$,
\item for any open symmetric $V\neq \tom$ there is a finite set $F\subseteq G$ and some $k\geqslant 1$ such that $G=(FV)^k$,
\item\begin{enumerate}
\item[(i)] $G$ is not the union of a chain of proper open subgroups, and
\item[(ii)] if $V$ is a symmetric open generating set for $G$, then $G=V^k$ for some $k\geqslant 1$.
\end{enumerate}

\item any compatible left-invariant metric on $G$ is bounded,
\item any continuous left-invariant \'ecart on $G$ is bounded,
\item any continuous {\em length} function $\ell\colon G\til \R_+$,  i.e., satisfying $\ell(1)=0$ and $\ell(xy)\leqslant \ell(x)+\ell(y)$, is bounded.
\end{enumerate}
\end{thm}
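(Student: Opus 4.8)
The plan is to show that each of $(2)$–$(9)$ is equivalent to $(1)$, organised as a few loops meeting at $(1)$ (equivalently, at $(9)$), together with a separate argument inserting $(4)$–$(6)$. The first loop, $(1)\Rightarrow(7)\Rightarrow(8)\Rightarrow(9)\Rightarrow(1)$, is essentially formal. For $(1)\Rightarrow(7)$, a compatible left-invariant metric $d$ makes $G$ act continuously and isometrically on $(G,d)$ by left translation, with the orbit of $1$ equal to all of $G$, hence bounded. For $(7)\Rightarrow(8)$, if $e$ is a continuous left-invariant \'ecart then $d_l+e$ is a compatible left-invariant metric (with $d_l$ any compatible left-invariant metric, which exists by Section \ref{uniformities}), so it is bounded and thus so is $e$. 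For $(8)\Rightarrow(9)$, pass from a continuous length function $\ell$ to the symmetric one $g\mapsto\ell(g)+\ell(g\inv)$ and then to the continuous left-invariant \'ecart $(g,h)\mapsto\ell(g\inv h)+\ell(h\inv g)$. Finally, $(9)\Rightarrow(1)$ since a continuous isometric action on $(Y,d)$ with $Gy_0$ unbounded yields the unbounded continuous length function $g\mapsto d(gy_0,y_0)$.

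The second loop relies on the constructions of Section \ref{affine}. Here $(1)\Rightarrow(2)$ is trivial. For $(2)\Rightarrow(1)$ and $(3)\Rightarrow(1)$, suppose $(1)$ fails, with $Gy_0$ unbounded; replacing $Y$ by the (separable) closure of $Gy_0$, form the Arens--Eells space $\AE(Y)$, a separable Banach space, with its canonical linear isometric representation $\pi$ and associated cocycle $b_{y_0}(g)=m_{gy_0,y_0}$, so that $\norm{b_{y_0}(g)}_{\AE}=d(gy_0,y_0)$. Then $\rho_{y_0}=\pi\times b_{y_0}$ is a continuous affine isometric action on $\AE(Y)$ whose orbit of $0$ is unbounded, giving $\neg(2)$; composing with the standard embedding $\Aff{\AE(Y)}\hookrightarrow\GL{\AE(Y)\oplus\R}$ produces a continuous linear representation of operator norm at least $1+d(gy_0,y_0)$, giving $\neg(3)$ (continuity in the sense of Section \ref{topologies} follows since $\pi$ is strongly continuous, $b_{y_0}$ is norm-continuous, and $d(gy_0,y_0)\to0$ as $g\to1$). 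For $(5)\Rightarrow(3)$: a continuous linear $\pi$ is bounded, say by $K$, on some symmetric open $W\ni1$, and writing $G=(FW)^k$ with $1\in F$ (which is $(5)$ applied to $W$) yields $\norm{\pi(g)}\leqslant\big(K\max_{f\in F}\norm{\pi(f)}\big)^k$ for all $g$.

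To insert $(4)$, $(5)$, $(6)$ I would prove $(9)\Rightarrow(5)\Rightarrow(6)\Rightarrow(9)$ and $(9)\Rightarrow(4)\Rightarrow(9)$. The easy halves: $(5)\Rightarrow(6)$ holds because a finite subset of $G$ lies in a single member of any increasing chain of open subgroups (ruling out $G=(FH_0)^k$ with $H_0$ proper), and because a finite $F$ lies inside a power $V^m$ when $V$ is a symmetric generating set, so $G=(FV)^k\subseteq V^{(m+1)k}$. Next, $(6)\Rightarrow(9)$: for a symmetrised continuous length function $\ell$, the open subgroups $\langle\{\ell<c\}\rangle$ form an increasing chain with union $G$, so by $(6)(i)$ some $\{\ell<c\}$ generates $G$, whence $(6)(ii)$ gives $G=\{\ell<c\}^k$ and $\ell<ck$. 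And $(4)\Rightarrow(9)$: the sublevel sets $W_n=\{\ell<n\}$ of an unbounded continuous length function satisfy $W_n^k\subseteq\{\ell<nk\}\neq G$.

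The heart of the matter is $\neg(5)\Rightarrow\neg(9)$ and $\neg(4)\Rightarrow\neg(9)$, that is, converting a covering failure into an \emph{unbounded, finite-valued, continuous} left-invariant \'ecart. I would build this by the Birkhoff--Kakutani construction of Section \ref{uniformities}, applied to a bi-infinite chain $P_n^3\subseteq P_{n+1}$ of symmetric open neighbourhoods of $1$, all of which are proper (making the \'ecart unbounded, via a point $g_n\notin P_n$) while $\bigcup_nP_n=G$ (making it finite-valued). For $(4)$: replacing each $W_n$ by $W_n\cap W_n\inv$ and re-indexing, one may assume the $W_n$ are symmetric open neighbourhoods of $1$ with $\bigcup_nW_n=G$ and $W_n^k\neq G$ for all $n,k$; taking $P_{n+1}=P_n^3\cup W_{n+1}$ for $n\geqslant0$ and refining downward, an easy induction gives $P_n\subseteq W_n^{3^n}\neq G$. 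For $(5)$, the symmetric open neighbourhood $V$ of $1$ witnessing the failure need not generate $G$, so keeping the \'ecart finite-valued forces one to interleave a countable dense set of extra generators $g_1,g_2,\dots$ into the chain; the point — and precisely the reason $(5)$ is stated with $(FV)^k$ rather than $FV^k$ — is that the finitely many extra generators used by stage $n$ can be absorbed while $P_n$ stays inside some $(FV)^k$, hence proper. I expect this bookkeeping, together with checking separability and the continuity conditions of Section \ref{topologies} for the representations built from $\AE(Y)$, to be the only genuinely delicate points; everything else is routine manipulation of neighbourhoods and of the operator norm.
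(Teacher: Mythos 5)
Your proposal is correct, and it differs from the paper's proof in two substantive ways. First, the paper does not reprove the equivalence of (1), (4), (5), (6) and (7) at all: it cites \cite{OB} for these and only adds the implications (4)$\saa$(8)$\saa$(7), the duality of \'ecarts and length functions, (1)$\saa$(2), (5)$\saa$(3), and the two constructions (2)$\saa$(1) and (3)$\saa$(7). You instead give a self-contained treatment, and your mechanism for the hard implications (9)$\saa$(4) and (9)$\saa$(5) — running Birkhoff--Kakutani on a bi-infinite increasing chain $P_n^3\subseteq P_{n+1}$ of proper symmetric open neighbourhoods with $\bigcup_n P_n=G$, interleaving a dense set of generators in the case of (5) so the \'ecart stays finite-valued — is exactly the method of Lemma \ref{bounded uniformity} and is sound (the normalisations you need, $1\in V$ after replacing $V$ by $V^2$ and $1\in F$, all go through). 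Second, and more interestingly, for the unboundedness of linear representations you linearise the Arens--Eells affine isometric action via $\Aff{\AE(Y)}\hookrightarrow\GL{\AE(Y)\oplus\R}$, getting $\norm{\hat\rho(g)}\geqslant 1+d(gy_0,y_0)$ with local boundedness near $1$ because the cocycle vanishes there; this closes the loop through (1) rather than through (7). The paper instead proves (3)$\saa$(7) directly by equipping $\M(G)$ with the weighted kernel $\psi(g,h)=e^{d(g,1)}e^{d(h,1)}d(g,h)$, which requires a genuinely nontrivial lower-bound claim just to verify that $\norm\cdot_\psi$ is a norm. Your route is shorter and buys the same conclusion for the theorem; what the paper's construction buys is an unbounded representation that is not merely an affine isometric action in disguise (it is unbounded already on the "linear part"), but nothing in the statement requires that. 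The only caveat is that yours is a plan with the bookkeeping for (9)$\saa$(5) deferred; you have, however, correctly isolated where the work lies, and the absorption estimate $P_{n+1}\subseteq\big((F_n\cup\{g_{n+1}^{\pm1}\}\cup\{1\})V\big)^{3k_n}$ does close the induction.
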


\begin{proof}All but items (2), (3) , (8) and (9) were shown to be equivalent to (OB) in \cite{OB}. Now,  (8) and (9) easily follow from (4), while (8) implies (7), and left-invariant \'ecarts $d$ and length-functions $\ell$ are dual by $\ell(g)=d(g,1)$ and $d(g,h)=\ell(h\inv g)$.
Also, (2) is immediate from (1). And if $\pi\colon G\til \GL X $ is a continuous linear representation, then $\norm{\pi(g)}$ is bounded in a neighbourhood of $1$. So, if (5) holds, then $\pi$ is bounded, showing (5)$\saa$(3).

(2)$\saa$(1): We show the contrapositive, that is, if  $G$ acts continuously and isometrically on a separable metric space $(X,d)$ with unbounded orbits, then $G$ acts continuously and by affine isometries on the Arens-Eells space $\AE(X)$ space such that every orbit is unbounded. 

As in Section \ref{affine}, let $\pi$ denote the isometric linear representation of $G$ on $\AE(X)$ induced by the action $\pi(g)m=m(g\inv\;\cdot\;)$ on $\M(X)$ and, for any point $e\in X$, construct a cocycle $b_e\colon G\til \AE(X)$ associated to $\pi$ by setting 
$$
b_e(g)=m_{ge,e}.
$$
Define also an isometric embedding $\phi_e\colon X\til \AE(X)$ by 
$$
\phi_e(x)=m_{x,e}.
$$
To verify that $\phi_e$ indeed is an isometry, note that 
$$
\norm{\phi_e(x)-\phi_e(y)}=\norm{(\delta_x-\delta_e)-(\delta_y-\delta_e)}=\norm{\delta_x-\delta_y}=d(x,y).
$$
As noted in Section \ref{affine}, $\phi_e$ conjugates the $G$-action on $X$ with $\rho_e$ and thus, as $G$ has an unbounded orbit on $X$, it follows that $G$ has an unbounded orbit on $\AE(X)$ via the affine isometric action $\rho_e$.


(3)$\saa$(7): Again we show that contrapositive. So suppose $d$ is an unbounded compatible left-invariant metric on $G$ and let  $\sigma\colon G\til [1,\infty\,[$ be the function defined by 
$$
\sigma(g)=\exp d(g,1_G)
$$
and note that $\sigma(1_G)=1$, $\sigma(g\inv)=\sigma(g)$ and  $\sigma(gh)\leqslant \sigma(g)\sigma(h)$.
Also, for $g\in G$, let $\pi(g)\in \GL{\M(G)}$ be the invertible linear operator defined by $\pi(g)m=m(g\inv\; \cdot\;)$.

Let now $\psi\colon G^2\til \R_{\geqslant 0}$ be the non-negative kernel on $G$ defined by 
$$
\psi(g,h)=\sigma(g)\sigma(h)d(g,h)
$$ 
and consider the corresponding pseudonorm 
$$
{\norm {m}}_\psi=\inf\big( \sum_{i=1}^n |a_i|\psi(p_i,q_i) \;\del\; m=\sum_{i=1}^na_im_{p_i,q_i}\big).
$$
Note that for any $g,h,f\in G$, one has
$$
\psi(gh,gf)=\sigma(gh)\sigma(gf)d(gh,gf)\leqslant \sigma(g)^2\sigma(h)\sigma(f)d(h,f)=\sigma(g)^2\psi(h,f),
$$
and so $\norm{\pi(g)}_\psi\leqslant \sigma(g)^2$.

\begin{claim}
Suppose $m\in \M(G)$ is a molecule,  $g\in G$ and $\alpha>0$. Then, if $m(h)=0$ for all $h\neq g$ with $d(g,h)<\alpha$, we have
$$
{\norm {m}}_\psi\geqslant |m(g)|\alpha \exp\big( d(g,1)-\alpha\big).
$$
\end{claim}

To see this, let $m=\sum_{i=1}^na_im_{p_i,q_i}$ be any presentation of $m$ and let $A$ be the set of $i\in [1,n]$ such that either $d(g,p_i)< \alpha$ or $d(g,q_i)< \alpha$. Set also $m_1=\sum_{i\in A}a_im_{p_i,q_i}$ and $m_2=\sum_{i\notin A}a_im_{p_i,q_i}$, whence $m_2(h)=0$ whenever $d(g,h)< \alpha$. Since $m=m_1+m_2$, it follows that  $m_1(h)=m(h)=0$ for any $h\neq g$ with $d(g,h)<\alpha$. Moreover, by the definition of $\sigma$, we see that $\sigma(h)>\exp\big( d(g,1)-\alpha\big)$ for any $h\in G$ with $d(g,h)<\alpha$ and so for any $i\in A$,
$$
\exp\big( d(g,1)-\alpha\big)<\sigma(p_i)\sigma(q_i).
$$

Now, by the calculus for the Arens-Eells space, there is a presentation $m_1=\sum_{i=1}^kb_im_{r_i,s_i}$, with $r_i,s_i\in {\rm supp}(m_1)$ and $r_i\neq s_i$, minimising the estimate for the Arens-Eells norm of $m_1$, in particular, such that
$$
\sum_{i=1}^k|b_i|d({r_i,s_i})\leqslant \sum_{i\in A}|a_i|d({p_i,q_i}).
$$
Letting $C$ be the set of $i\in [1,k]$ such that either $r_i=g$ or $s_i=g$, we see that 
$$
|m(g)|=|m_1(g)|=\big|\sum_{i\in C}b_im_{r_i,s_i}(g)\big|\leqslant \sum_{i\in C}|b_i|.
$$
Moreover, for $i\in C$, $d(r_i,s_i)\geqslant \alpha$ and thus 
$$
|m(g)|\alpha\leqslant \sum_{i\in C}|b_i|d(r_i,s_i)\leqslant \sum_{i\in A}|a_i|d({p_i,q_i}).
$$
It thus follows that
$$
 |m(g)| \alpha\exp\big( d(g,1)-\alpha\big)< \sum_{i\in A}|a_i|\sigma(p_i)\sigma(q_i)d({p_i,q_i})\leqslant \sum_{i=1}^n|a_i|\sigma(p_i)\sigma(q_i)d({p_i,q_i}).
$$
Since the presentation $m=\sum_{i=1}^na_im_{p_i,q_i}$ was arbitrary, this shows that
$$
|m(g)| \alpha\exp\big( d(g,1)-\alpha\big)\leqslant \norm m_\psi,
$$
which proves the claim.

Note that then if $m$ is any non-zero molecule, we can choose $g\neq 1$ in its support and let $0<\alpha<\frac 12 d(g,1)$ be such that $m(h)=0$ for any $h\neq g$ with $d(g,h)<\alpha$. Then
$\norm m_\psi\geqslant |m(g)| \alpha\exp\big( d(g,1)-\alpha\big)>0$, which shows that $\norm\cdot_\psi$ is a norm on $\M(G)$.

Also, if $f,h\in G$ with $d(f,h)>1$, then $\norm{m_{f,h}}_\psi\geqslant  \exp\big( d(f,1)-1\big)$. Therefore, if we let $g_n\in G$ be such that $d(1,g_n\inv)\Lim{n\til \infty}\infty$ and pick $f,h\in G$ with $d(f,h)>1$, then 
$$
\norm{\pi({g_n})m_{f,h}}_\psi=\norm{m_{g_nf,g_nh}}_\psi\geqslant  \exp\big( d(g_nf,1)-1\big)= \exp\big( d(f,g_n\inv)-1\big)\Lim{n\til \infty}\infty,
$$
showing that also $\norm{\pi({g_n})}_\psi\Lim{n\til \infty}\infty$.

Also, as is easy to verify, if $D$ is a countable dense subset of $G$, the set of molecules that are rational linear combinations of atoms $m_{g,h}$ with $g,h\in D$ is a countable dense subset of $(\M(G),\norm\cdot_\psi)$. So the completion $Z=\overline{\M(G)}^{\norm\cdot_\psi}$ is a separable Banach space and  $\pi\colon G\til \GL{\M(G),\norm\cdot_\psi}$  extends to a continuous action of $G$ by bounded linear automorphisms on $Z$ with $\norm{\pi(g)}_\psi$ unbounded. 
\end{proof}


\subsection{Bounded uniformities}\label{bdd uniformities}
For each of the uniformities considered in Section \ref{uniformities}, one may consider the class of groups for which they are bounded in the sense of every real valued uniformly continuous function being bounded, cf. Lemma \ref{bounded uniformity}. Of course, such considerations are not new and indeed for the left, or equivalently right uniformity, appeared in the work of J. Hejcman \cite{hejcman} (see also \cite{atkin}).
\begin{defi}\cite{hejcman}
A topological group $G$ is {\em bounded} if for any  open $V\ni 1$ there is a finite set $F\subseteq G$ and some $k\geqslant 1$ such that $G=FV^k$.
\end{defi}

Recall that a function $\varphi\colon G\til \R$ is {\em left-uniformly continuous} if for any $\eps>0$ there is an open $V\ni 1$ such that
$$
\a x,y \in G\; (x\inv y\in V\til |\varphi(x)-\varphi(y)|<\eps).
$$

We then have the following reformulation of boundedness.

\begin{prop}\label{bounded}
The following are equivalent for a Polish group $G$.
\begin{enumerate}
\item $G$ is bounded,
\item any left-uniformly continuous $\varphi\colon G\til \R$ is bounded,
\item $G$ has property (OB) and any open subgroup has finite index.
\end{enumerate}
\end{prop}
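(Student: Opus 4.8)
We prove $(1)\Leftrightarrow(2)$, then $(1)\Rightarrow(3)$, and finally $(3)\Rightarrow(1)$. For $(1)\Leftrightarrow(2)$, observe that $\varphi\colon G\til\R$ is left-uniformly continuous exactly when it is uniformly continuous with respect to the left uniformity $\ku E_l$, and that, since $E^l_W[x]=xW$, a trivial induction gives $(E^l_W)^n[F]=FW^n$ for every finite $F\subseteq G$. Applying Lemma \ref{bounded uniformity} to the uniform space $(G,\ku E_l)$ therefore says precisely that every left-uniformly continuous real function on $G$ is bounded if and only if for each symmetric open $W\ni1$ there are a finite $F$ and an $n\geqslant1$ with $G=FW^n$; since the symmetric open neighbourhoods of $1$ form a neighbourhood basis, this last condition is exactly the definition of $G$ being bounded.

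For $(1)\Rightarrow(3)$, an open subgroup $H\leqslant G$ is in particular an open neighbourhood of $1$, so boundedness yields a finite $F$ with $G=FH^k=FH$, whence $[G:H]\leqslant|F|<\infty$. To see that $G$ has property (OB), I verify condition (4) of Theorem \ref{OB}: given an exhaustive chain of open sets $W_0\subseteq W_1\subseteq\ldots$, after discarding the finitely many terms not containing $1$, re-indexing, and replacing each remaining $W_n$ by $W_n\cap W_n\inv$, we may assume the $W_n$ are symmetric open neighbourhoods of $1$ with $\bigcup_nW_n=G$; boundedness applied to $W_0$ gives $G=FW_0^k$ with $F$ finite and $k\geqslant1$, and choosing $N$ with $F\subseteq W_N$ gives $G\subseteq W_N\cdot W_N^k=W_N^{k+1}$, as required.

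For $(3)\Rightarrow(1)$, fix an open $V\ni1$; it suffices to treat $V\cap V\inv$ in its place, so we may assume $V$ symmetric. Let $H=\langle V\rangle$, an open --- hence Polish --- subgroup of $G$ since it contains the neighbourhood $V$ of $1$, and put $m=[G:H]<\infty$ with transversal $1=e_1,\ldots,e_m$. The key point is that $H$ itself has property (OB): given a continuous action of $H$ by isometries on a metric space $(X,d)$, one coinduces a continuous isometric action of $G$ on $Y=\{f\colon G\til X\del f(gh)=h\inv f(g)\text{ for all }g,h\}\cong X^m$ via $(g\cdot f)(x)=f(g\inv x)$ and $D(f,f')=\max_id\big(f(e_i),f'(e_i)\big)$; writing $g\inv e_i=e_{\sigma_g(i)}h_{g,i}$ with $\sigma_g$ a permutation and $h_{g,i}\in H$, the $G$-invariance of $D$ (from $H$ acting isometrically) and the continuity of the action (since $g\mapsto\sigma_g$ is locally constant and $g\mapsto h_{g,i}$ continuous) are routine, and then boundedness of the $G$-orbit of any $f_0\in Y$ with $f_0(e_1)=x_0$ forces the $H$-orbit of $x_0$ to be bounded. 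Since $V$ is a symmetric open generating set for $H$, Theorem \ref{OB}(6)(ii) applied to $H$ gives $H=V^k$ for some $k\geqslant1$, and hence $G=\{e_1,\ldots,e_m\}\,V^k$, i.e.\ $G$ is bounded.

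The genuinely hard step is the claim used in $(3)\Rightarrow(1)$, that property (OB) passes to open subgroups of finite index. One cannot simply transplant an unbounded continuous left-invariant \'ecart (or length function) from $H$ back to $G$, since $H$ need not be normal; the coinduced action on $X^m$ --- where $G$ simultaneously permutes the $m$ coordinates and acts on each through $H$ --- is engineered precisely to repair this, and the only non-formal verifications there are the $G$-invariance of $D$ and the joint continuity of the coinduced action.
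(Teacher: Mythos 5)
Your proof is correct, and while the key step $(3)\Rightarrow(1)$ follows the same skeleton as the paper's --- pass to the open subgroup $\langle V\rangle$, note it has finite index and property (OB), apply condition (6)(ii) of Theorem \ref{OB} to get $\langle V\rangle=V^k$, and multiply by a transversal --- you diverge from the paper in two substantive ways. First, the paper simply cites Proposition 4.3 of \cite{OB} for the fact that property (OB) passes to open subgroups of finite index, whereas you reprove it from scratch via the coinduced action on $Y\cong X^m$ with the sup metric; your verification is sound (the $G$-invariance of $D$ uses exactly that $\sigma_g$ permutes the indices while each $h_{g,i}$ acts isometrically, and restricting the bounded $G$-orbit of $f_0$ to $H$ and evaluating at $e_1=1$ recovers the $H$-orbit of $x_0$), and it has the merit of making the proposition self-contained; you are also right that this is the one step where normality of $H$ would otherwise be missed. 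Second, your implication structure is $(1)\Leftrightarrow(2)$, $(1)\Rightarrow(3)$, $(3)\Rightarrow(1)$, with $(1)\Rightarrow(3)$ proved directly through condition (4) of Theorem \ref{OB}; the paper instead closes the cycle as $(2)\Rightarrow(3)$ by contraposition, exhibiting concrete unbounded left-uniformly continuous functions (the coset-counting function $\varphi(y)=n$ for $y\in x_nH$ when some open subgroup has infinite index, and an unbounded continuous length function when (OB) fails). The paper's route is slightly shorter and more explicit about \emph{why} the functions witnessing unboundedness exist; yours buys a cleaner reduction to the already-catalogued equivalents of property (OB) and full use of both directions of Lemma \ref{bounded uniformity}. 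Both arguments are complete.
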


\begin{proof}(1)$\saa$(2): This implication is already contained in Lemma \ref{bounded uniformity}.

(2)$\saa$(3): If $H\leqslant G$ is an open subgroup with infinite index, let $x_1,x_2,\ldots$ be left coset representatives for $H$ and define $\varphi\colon G\til \R$ by $\varphi(y)=n$ for all $y\in x_nH$. Then $\varphi$ is left-uniformly continuous and unbounded. Also if $G$ fails property (OB) it admits an unbounded continuous length function $\ell\colon G\til \R_+$. But then $\ell$ is also left-uniformly continuous. 

(3)$\saa$(1): Suppose that $G$ has property (OB) and that any open subgroup has finite index. Then whenever $V\ni1$ is open, the group generated $\langle V\rangle=\bigcup_{n\geqslant 1}V^n$ is open and must have finite index in $G$. Now, by Proposition 4.3 in \cite{OB}, also $\langle V\rangle$ has property (OB) and, in particular, there is some $k\geqslant 1$ such that $\langle V\rangle =V^k$. Letting $F\subseteq G$ be a finite set of left coset representatives for $\langle V\rangle$ in $G$, we have $G=FV^k$, showing that $G$ is bounded.
\end{proof}

A function $\varphi\colon G\til \R$ is {\em uniformly continuous} if for any $\eps>0$ there is an open $V\ni 1$ such that
$$
\a x,y \in G\; (y\in VxV\til |\varphi(x)-\varphi(y)|<\eps).
$$
Equivalently, $\varphi$ is uniformly continuous if it simultaneously left and right uniformly continuous, i.e., continuous with respect to the Roelcke uniformity.

\begin{defi}
A topological group $G$ is {\em Roelcke bounded} if for any  open $V\ni 1$ there is a finite set $F\subseteq G$ and some $k\geqslant 1$ such that $G=V^kFV^k$.
\end{defi}

As for boundedness, we have the following reformulations of Roelcke boundedness.

\begin{prop}\label{Roelcke bounded}
The following are equivalent for a Polish group $G$.
\begin{enumerate}
\item $G$ is Roelcke bounded,
\item any uniformly continuous $\varphi\colon G\til \R$ is bounded,
\item 
\begin{enumerate}
\item for any open subgroup $H$, the double coset space ${\rm H}\backslash {\rm G}\,\slash \,{\rm H}$ is finite, and
\item any open subgroup has property (OB).
\end{enumerate}\end{enumerate}
\end{prop}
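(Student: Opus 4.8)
The plan is to establish the cycle $(1)\saa(2)\saa(3)\saa(1)$, in parallel with the proof of Proposition~\ref{bounded}, relying throughout on Lemma~\ref{bounded uniformity} and on the characterisations of property~(OB) gathered in Theorem~\ref{OB}. For $(1)\Leftrightarrow(2)$, note that ``uniformly continuous'' means uniformly continuous for the Roelcke uniformity $\ku E_R$; since for a symmetric open $W\ni 1$ the Roelcke entourage satisfies $(E^R_W)^k[x]=W^kxW^k$ for all $x\in G$ and $k\geqslant 1$, the assertion that $G$ is Roelcke bounded is precisely the assertion that for every $V\in\ku E_R$ there are a finite $F\subseteq G$ and $k\geqslant 1$ with $G=V^k[F]$, which by Lemma~\ref{bounded uniformity} is equivalent to every uniformly continuous $\varphi\colon G\til\R$ being bounded. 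Thus only $(2)\saa(3)$ and $(3)\saa(1)$ remain.

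For $(2)\saa(3)$ I would argue by contraposition in two cases. If (3)(a) fails, fix an open subgroup $H$ with $H\backslash G/H$ infinite, choose double coset representatives $g_1,g_2,\ldots$, and set $\varphi(g)=n$ whenever $g\in Hg_nH$. This is unbounded, and it is uniformly continuous because, taking $W=H$ in the definition, $y\in WxW=HxH$ forces $\varphi(y)=\varphi(x)$. If (3)(b) fails, fix an open subgroup $H$ (which is Polish, being a closed subgroup) failing property~(OB); by Theorem~\ref{OB} there is an unbounded continuous length function $\ell\colon H\til\R_+$, which we may take symmetric, e.g.\ via $\ell(h)=d(h,1)$ for a continuous left-invariant \'ecart $d$ on $H$. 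Extend it to $G$ by $\varphi|_H=\ell$ and $\varphi\equiv 0$ on $G\setminus H$. Given $\eps>0$, continuity of $\ell$ at $1$ furnishes a symmetric open $W\ni 1$ with $\ell<\eps/2$ on $W$; then for $x\in H$ we have $WxW\subseteq H$, and subadditivity of $\ell$ together with symmetry of $W$ gives $|\varphi(x)-\varphi(y)|<\eps$ for $y\in WxW$, while for $x\notin H$ we have $HxH\cap H=\tom$, so $\varphi$ vanishes on $WxW\subseteq HxH$. In both cases $\varphi$ is uniformly continuous and unbounded, contradicting $(2)$.

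For $(3)\saa(1)$, let $V\ni 1$ be open and, replacing $V$ by $V\cap V\inv$, assume $V$ is symmetric. Then $H=\langle V\rangle=\bigcup_nV^n$ is an open (hence Polish) subgroup, so by (3)(b) it has property~(OB); since $V$ is a symmetric open generating set for $H$, Theorem~\ref{OB}(6) yields $H=V^k$ for some $k\geqslant 1$. By (3)(a) there are only finitely many double cosets, $G=Hg_1H\cup\cdots\cup Hg_mH$, whence $G=V^kg_1V^k\cup\cdots\cup V^kg_mV^k=V^k\{g_1,\ldots,g_m\}V^k$, so $G$ is Roelcke bounded. I do not anticipate a genuine obstacle: the one step requiring more than bookkeeping is the observation in $(2)\saa(3)$ that $HxH\cap H=\tom$ for $x\notin H$, which is what makes the crude extension of a length function off $H$ behave well near the boundary of $H$; the remainder is a recombination of Theorem~\ref{OB} and Lemma~\ref{bounded uniformity}.
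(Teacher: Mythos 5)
Your proposal is correct and follows essentially the same route as the paper: Lemma \ref{bounded uniformity} for the equivalence of (1) and (2), the contrapositive via an unbounded function constant on double cosets (for (3)(a)) or a length function extended by zero off the subgroup (for (3)(b)), and the decomposition $G=\langle V\rangle F\langle V\rangle=V^kFV^k$ for (3)$\saa$(1). The extra details you supply — the identity $(E^R_W)^k[x]=W^kxW^k$, the symmetrisation of $V$ and of $\ell$, and the observation that $HxH\cap H=\tom$ for $x\notin H$ — are all correct and merely make explicit what the paper leaves to the reader.
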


\begin{proof}
Again the implication from (1) to (2) follows from Lemma \ref{bounded uniformity}.

(2)$\saa$(3): If $H\leqslant G$ is an open subgroup, then any  $\varphi\colon G\til \R$ that is constant on each double coset $HxH$ will be uniformly continuous. So if the double coset space ${\rm H}\backslash {\rm G}\,\slash \,{\rm H}$ is infinite, then $G$ supports an unbounded uniformly continuous function.

Also, if $H\leqslant G$ is an open subgroup without property (OB), then there is an unbounded continuous length function $\ell\colon H\til \R_+$. Setting $\ell(x)=0$ for all $x\in G\setminus H$, $\ell\colon G\til \R$ is uniformly continuous, but unbounded.

(3)$\saa$(1): Assume that (3) holds and that $V\ni 1$ is an open set. Then the open subgroup $\langle V\rangle$ has property (OB) and hence for some $k\geqslant 1$, $\langle V\rangle =V^k$.  Moreover,  the double coset space ${\rm \langle V\rangle}\backslash {\rm G}\,\slash \,{\rm \langle V\rangle}$ is finite and $G=\langle V\rangle F \langle V\rangle=V^kFV^k$ for some finite set $F\subseteq G$.
\end{proof}

Note that if $G$ is a Polish group all of whose open subgroups have finite index, e.g., if $G$ is connected, then the three properties of boundedness, Roelcke boundedness and property (OB) are equivalent. 

By Proposition 4.3 of \cite{OB}, if $G$ is a Polish group with property (OB) and $H\leqslant G$ is an open subgroup of finite index, then $H$ also has property (OB). However, it remains an open problem whether property (OB) actually passes to all open subgroups of (necessarily) countably index. Note that if this where to be the case, condition (3) in Proposition \ref{Roelcke bounded} above would simplify.

\begin{prob}Suppose $G$ is a Polish group with property (OB) and $H\leqslant G$ is an open subgroup. Does $H$ have property (OB)?
\end{prob}

Since a Polish group is easily seen to be bounded for the left uniformity if and only if it is bounded for the right uniformity, the only remaining case is the two-sided uniformity. Unfortunately, other than Lemma \ref{bounded uniformity}, we do not have any informative reformulation of this property for Polish groups. 

\begin{defi}
A topological group $G$ is {\em $\ku E_{ts}$-bounded} if for any  open $V\ni 1$ there is a finite set $F\subseteq G$ and some $k\geqslant 1$ such that for any $g\in G$ there are $x_0,\ldots , x_{k-1}$ and $x_k=g$ such that $x_0\in F$, $x_{i+1}\in x_iV$ and $x_{i+1}\inv \in x_i\inv V$ for all $i$. 
\end{defi}


\subsection{Roelcke precompactness}The notion of Roelcke precompactness originates in the work of W. Roelcke \cite{roelcke} on uniformities on groups and has recently been developed primarily by  Uspenski\u\i{}   \cite{uspenskii1,uspenskii2,uspenskii3,uspenskii} and, in the work of T. Tsankov \cite{tsankov}, fund some very interesting applications in the classification of unitary representations of non-Archimedean Polish groups. Tsankov was also able to essentially characterise $\aleph_0$-categoricity of a countable model theoretical structure in terms of Roelcke precompactness of its automorphism group, thus refining the classical theorem of Engeler, Ryll-Nardzewski and Svenonius (see \cite{hodges}). We present a related characterisation in Proposition \ref{roelcke} below.

\begin{defi}
A topological group $G$ is {\em Roelcke precompact} if and only if for any open $V\ni 1$ there is a finite set $F\subseteq G$ such that $G=VFV$.
\end{defi}

Note that, by Theorem \ref{compact}, precompactness with respect to either of the three other uniformities on a Polish group is simply equivalent to compactness. So Roelcke precompactness is the only interesting notion.

Suppose $G$ is a group acting by isometries on a metric space $(X,d)$. For any $n\geqslant 1$, we let $G$ act diagonally on $X^n$, i.e., 
$$
g\cdot (x_1,\ldots, x_n)=(gx_1,\ldots, gx_n),
$$
and equip $X^n$ with the supremum metric $d_\infty$ defined from $d$ by 
$$
d_\infty\big((x_1,\ldots,x_n),(y_1, \ldots,y_n)\big)=\sup_{1\leqslant i\leqslant n }d(x_i,y_i).
$$

\begin{defi}
An isometric action $\alpha \colon G\curvearrowright X$ by a group $G$ on a metric space $(X,d)$ is said to be {\em approximately oligomorphic} if for any $n\geqslant 1$ and $\eps>0$ there is a finite set $A\subseteq X^n$ such that
$$
G\cdot A=\{g\cdot \overline x\del g\in G\;\;\; \&\;\;\; \overline x\in A\}
$$
is $\eps$-dense in $(X^n,d_\infty)$.
\end{defi}

\begin{prop}\label{roelcke}
The following are equivalent for a Polish group $G$.
\begin{enumerate}
\item $G$ is Roelcke precompact,
\item for every $n\geqslant 1$ and open $V\ni 1$ there is a finite set $F\subseteq G$ such that
$$
\underbrace{G\times \ldots\times G}_{n\textrm { times}}=V\cdot (\underbrace{FV\times \ldots \times FV}_{n \text{ times}})
$$
\item for any continuous isometric action $\alpha\colon G\curvearrowright X$ on a metric space $(X,d)$ inducing a dense orbit, any open $U\ni 1$, $\eps>0$ and $n\geqslant 1$, there is a finite set $A\subseteq X^n$ such that $U\cdot A$ is $\eps$-dense in $(X^n,d_\infty)$,
\item $G$ is topologically isomorphic to a closed subgroup $ H\leqslant {\rm Isom}(X,d)$, where $(X,d)$ is a separable complete metric space, ${\rm Isom}(X,d)$ is equipped with the topology of pointwise convergence and the action of $H$ on $X$ is approximately oligomorphic and induces a dense orbit.
\end{enumerate}
\end{prop}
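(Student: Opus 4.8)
The plan is to establish the cycle $(1)\Rightarrow(2)\Rightarrow(3)\Rightarrow(4)\Rightarrow(1)$; since $(2)$ with $n=1$ is literally the statement $G=VFV$, the implication $(2)\Rightarrow(1)$ is free, so closing the cycle yields all four equivalences.

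Two of the links are essentially bookkeeping. For $(2)\Rightarrow(3)$: given a continuous isometric action $\alpha\colon G\curvearrowright X$ with a dense orbit $Gx_0$, and given $U\ni 1$, $\eps>0$, $n$, first shrink to a symmetric open $V\subseteq U$ with $d(vx_0,x_0)<\eps$ for all $v\in V$, then use $(2)$ to pick a finite $F$ with $G^n=V\cdot(FV)^n$. Writing $g_i=vf_iv_i$ and using that $vf_i$ acts as an isometry, $d(g_ix_0,vf_ix_0)=d(v_ix_0,x_0)<\eps$; hence $A:=\{(f_1x_0,\dots,f_nx_0)\colon f_i\in F\}$ has $V\cdot A$, and therefore $U\cdot A$, $\eps$-dense among the tuples $(g_1x_0,\dots,g_nx_0)$, and these are dense in $X^n$. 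For $(4)\Rightarrow(1)$: identify $G$ with a closed $H\le{\rm Isom}(X)$, $X$ complete separable, the action approximately oligomorphic with a dense orbit; the sets $V_{\bar a,\delta}=\{h\colon d_\infty(h\bar a,\bar a)<\delta\}$, for finite tuples $\bar a$ from $X$ and $\delta>0$, form a neighbourhood basis at $1$, so it suffices to write $G=V_{\bar a,\delta}FV_{\bar a,\delta}$ with $F$ finite. Apply approximate oligomorphicity to tuples of length $2|\bar a|$ to get a finite $\mathcal A\subseteq X^{|\bar a|}\times X^{|\bar a|}$ with $H\cdot\mathcal A$ being $(\delta/4)$-dense. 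Given $g\in G$, approximate the point $(\bar a,g^{-1}\bar a)$ by some $h\cdot(\bar b,\bar c)$, so that $h\bar b$ and $gh\bar c$ are both $(\delta/4)$-close to $\bar a$; the key observation is that if $h^{\ast}$ is a fixed once-and-for-all witness with $h^{\ast}\bar b$ close to $\bar a$, then $h^{\ast}h^{-1}$ approximately fixes $\bar a$ — because $h^{-1}\bar a\approx\bar b\approx(h^{\ast})^{-1}\bar a$ — so $h^{\ast}h^{-1}\in V_{\bar a,\delta}$; applying the same idea a second time to the tuple $h^{\ast}\bar c$ extracts a finite middle factor, and tracking the constants one reads off $g\in V_{\bar a,\delta}FV_{\bar a,\delta}$ with $F$ the finite set of chosen witnesses. (Running the identical computation with tuples of length $(n+1)|\bar a|$ and the point $(\bar a,g_1^{-1}\bar a,\dots,g_n^{-1}\bar a)$ delivers a common left multiplier, and hence $(2)$, directly; so one may also close the cycle via $(4)\Rightarrow(2)$.)

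For $(3)\Rightarrow(4)$ I would build the space by completion: fix a compatible left-invariant metric $d_l\le 1$ on $G$ and let $X$ be its completion. Left translations are $d_l$-isometries and so extend to isometries $\bar\lambda_g$ of $X$; the extended action is continuous, isometric, and has the dense orbit $G\cdot 1$. Applying $(3)$ to the left-translation action of $G$ on $(G,d_l)$ itself yields, for every $n$ and $\eps$, a finite $A\subseteq G^n$ with $G\cdot A$ $\eps$-dense in $(G^n,(d_l)_\infty)$, and as $A\subseteq X^n$ and $G^n$ is dense in $X^n$ this makes the action on $X$ approximately oligomorphic, with a dense orbit. The delicate point — and the one I expect to take real work — is that $g\mapsto\bar\lambda_g$ is a topological isomorphism of $G$ onto a \emph{closed} subgroup of ${\rm Isom}(X)$: injectivity and the fact that it is a topological embedding are routine, but for closedness one uses that right translations are $d_l$-uniformly continuous and hence also extend to homeomorphisms of $X$, so that any pointwise limit $\phi$ of left translations $\bar\lambda_{g_i}$ must itself be the extension $\bar\lambda_z$ of left-multiplication by $z:=\phi(1)$; surjectivity of $\phi$ then forces the orbit of $z$ under the extended right translations to be dense in $X$, and one must argue — using Raikov completeness of $G$ together with Roelcke precompactness, which at this stage we already know — that this puts $z$ back in $G$.

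The main obstacle is $(1)\Rightarrow(2)$. Here the one-dimensional covering $G=VFV$ does not obviously tensor up: for a fixed $f$ the set $VfV$ is not a finite union of left translates of $V$, so there is no naive diagonalisation. The route I would take is through the isometry-group picture. Roelcke precompactness makes the Roelcke completion $\widehat G_R$ \emph{compact}, with $G$ acting on it by homeomorphisms and with the dense orbit $G\cdot 1$, and it provides, for every symmetric open $W$, a finite $F$ with $G=WFW$; the goal is to promote this to the $n$-variable covering $G^n=W\cdot(FW)^n$ with a \emph{common} left multiplier, and the mechanism I would attempt is to work inside the compact space $\widehat G_R^{\,n}$, and inside the Polish space $X^n$ from the previous paragraph, combining a compactness argument with a Baire-category argument to trade the ``all of $G$'' on the left for a neighbourhood of $1$. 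Making this last step rigorous — equivalently, showing directly that the canonical left-translation action of a Roelcke precompact Polish group is approximately oligomorphic in every arity simultaneously — is what I expect to be genuinely hard, and is the crux of the whole proposition.
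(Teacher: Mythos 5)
The implication $(1)\Rightarrow(2)$, which you correctly single out as the crux, is left as an unexecuted plan in your proposal, and the mechanism you gesture at (passing to the compact Roelcke completion and ``combining a compactness argument with a Baire-category argument'') is not the right tool: compactness of $\widehat G_R^{\,n}$ gives you finite coverings by Roelcke-entourage sections, which is exactly the statement $G^n=\bigcup_{\bar f}(Vf_1V\times\cdots\times Vf_nV)$ you already have coordinatewise, and does nothing to produce the \emph{common} left multiplier; Baire category in $X^n$ likewise has no obvious purchase since the sets $V\cdot(FV)^n$ need not be anywhere comeagre. The actual argument is an elementary induction on $n$ with a conjugation trick, and is worth recording. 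Given symmetric open $V\ni 1$, choose symmetric open $W\ni 1$ with $W^2\subseteq V$ and, by the inductive hypothesis, a finite $D$ with $G^n=W\cdot(DW)^n$. Now shrink further: let $U\ni 1$ be open, symmetric, contained in $W$ and in every conjugate $dWd\inv$ for $d\in D$, and use Roelcke precompactness once more to get a finite $E$ with $G=UEU$. For a tuple $(x_1,\dots,x_n,y)$, first write $x_i\in wd_iW$ with a common $w\in W$, then find $u\in U$ with $w\inv y\in uEU$, so $y\in wu\cdot EV$. The point of the conjugation condition is that $d_i\inv u\inv d_i\in W$, whence
$$
x_i\in wd_iW=w\,(ud_i)\,(d_i\inv u\inv d_i)\,W\subseteq wud_iW^2\subseteq wud_iV,
$$
so the perturbed multiplier $wu\in V$ works simultaneously for all $n+1$ coordinates. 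Without this step the equivalence is not established: $(2)\Rightarrow(3)\Rightarrow(4)\Rightarrow(1)$ plus the trivial $(2)\Rightarrow(1)$ still leaves $(1)$ implying none of the other three.

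The remaining links of your cycle are essentially the paper's. Your $(2)\Rightarrow(3)$ is identical. Your $(3)\Rightarrow(4)$ via the completion of $(G,d_l)$ is the paper's route; the closedness of the image of $G$ in ${\rm Isom}(X,d)$, which you flag as delicate, is a standard fact about the left completion of a Polish group (your sketch via extending right translations is the usual proof) and the paper simply asserts it. For $(4)\Rightarrow(1)$ the paper defers to the proof of Theorem 5.2 of \cite{OB}, whereas you sketch a direct argument by applying approximate oligomorphicity in arity $2|\bar a|$ to the point $(\bar a,g\inv\bar a)$; that sketch is sound in outline and, as you note, running it in arity $(n+1)|\bar a|$ gives $(4)\Rightarrow(2)$ directly. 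But none of this substitutes for the missing $(1)\Rightarrow(2)$.
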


The implication from (1) to (3) was essentially noted in \cite{tsankov}.

\begin{proof}
(1)$\saa$(2): The proof is by induction on $n\geqslant 1$, the case $n=1$ corresponding directly to Roelcke precompactness.

Now suppose the result hold for $n$ and fix $V\ni 1$ symmetric open. Choose a symmetric open set $W\ni 1$ such that $W^2\subseteq V$ and find by the induction hypothesis some finite set $D\subseteq G$ such that
$$
\underbrace{G\times \ldots\times G}_{n\textrm { times}}=W\cdot (\underbrace{DW\times \ldots \times DW}_{n \text{ times}}).
$$
Set now $U=V\cap \bigcap_{d\in D}dWd\inv$ and pick a finite set $E\subseteq G$ such that $G=UEU$. We claim that for  $F=D\cup E$, we have 
$$
\underbrace{G\times \ldots\times G}_{n+1\textrm { times}}=V\cdot (\underbrace{FV\times \ldots \times FV}_{n+1 \text{ times}}).
$$
To see this, suppose $x_1,\ldots, x_n, y\in G$ are given. By choice of $D$, there are $w\in W$ and $d_1,\ldots, d_n\in D$ such that $x_i\in wd_iW$ for all $i=1,\ldots, n$. Now find some $u\in U$ such that $w\inv y\in uEU$, whence  $y\in wuEV$. Since $u\inv \in U\subseteq dWd\inv$ for every $d\in D$, we have $d_i\inv u\inv d_i\in W$ for every $i$ and so 
$$
x_i\in wd_iW=wd_i\cdot d_i\inv u d_i\cdot d_i\inv u\inv d_i\cdot W\subseteq w ud_i W^2\subseteq w ud_iV.
$$ 
Thus, 
$$
(x_1,\ldots, x_n,y)\in wu(\underbrace{DV\times \ldots \times DV}_{n \text{ times}}\times EV)\subseteq V\cdot (\underbrace{DV\times \ldots \times DV}_{n \text{ times}}\times EV),
$$
which settles the claim and thus the induction step.

(2)$\saa$(3): Suppose $U\ni 1$ is open, $n\geqslant 1$, $\eps>0$ and fix any $x\in X$. Let also $V=U\cap \{g\in G\del d(gx,x)<\eps/2\}$. Pick a finite set $F\subseteq G$ such that 
$$
\underbrace{G\times\ldots\times G}_{n \text { times}}=V\cdot \big(\underbrace{FV\times \ldots\times FV}_{n \text{ times}}\big).
$$
and set $A=\{(f_1x,\ldots, f_nx)\del f_i\in F\}\subseteq X^n$. Then if $(y_1,\ldots, y_n)\in X^n$, we can find $g_i\in G$ such that $d(y_i,g_ix)<\eps/2$ for all $i$. Also, there are $w, v_i\in V$ such that $g_i=wf_iv_i$, whence
\[\begin{split}
d(y_i,wf_ix)&\leqslant d(y_i,g_ix)+d(g_ix,wf_ix)\\
&\leqslant \eps/2+d(wf_iv_ix,wf_ix)\\
&=\eps/2+d(v_ix,x)\\
&< \eps/2+\eps/2.
\end{split}\]
In particular, $V\cdot A$ and hence also $U\cdot A$  is $\eps$-dense in $X^n$.

(3)$\saa$(4): Let $d$ be a compatible left-invariant metric on $G$ and let $X$ be the completion of $G$ with respect to $d$. Since the left-shift action of $G$ on itself is transitive, this action extends to a continuous action by isometries on $(X,d)$ with a dense orbit. Moreover, we can see $G$ as a closed subgroup of ${\rm Isom}(X,d)$, when the latter is equipped with the pointwise convergence topology. By (3), the action of $G$ on $X$ is approximately oligomorphic.

The implication (4)$\saa$(1) is implicit in the proof of Theorem 5.2 in \cite{OB}.
\end{proof}

We shall return to the microscopic properties of Roelcke precompact Polish groups later in Section \ref{microscopic}.


\subsection{Fixed point properties}\label{fixed points}
We shall now briefly consider the connection between the aforementioned boundedness properties and fixed point properties for affine actions on Banach spaces.
\begin{defi}
A Polish group $G$ has {\em property (ACR)} if any affine continuous action of $G$ on a separable reflexive Banach space has a fixed point. 
\end{defi}

\begin{prop}\label{OB then ACR}
Any Polish group with property (OB) has property (ACR).
\end{prop}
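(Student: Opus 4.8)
The plan is to combine property (OB) with a standard convexity-and-reflexivity argument, in the style of the Ryll-Nardzewski / Bruhat-Tits center-of-a-bounded-set technique. Let $G$ be a Polish group with property (OB) and suppose $\rho = \pi \times b \colon G \to \Aff X$ is a continuous affine action on a separable reflexive Banach space $X$, with $\pi \colon G \to \GL X$ the associated linear representation and $b \colon G \to X$ the cocycle. By Theorem \ref{OB}, property (OB) implies that every orbit of the affine action is bounded; pick any $x_0 \in X$ (say $x_0 = 0$) and let $B = \overline{\operatorname{conv}}\,(G \cdot x_0)$, the closed convex hull of the orbit. Then $B$ is a bounded, closed, convex, nonempty subset of $X$, and since $\rho(g)$ is an affine homeomorphism of $X$ that permutes $G \cdot x_0$, the set $B$ is $G$-invariant.

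The next step is to produce a $G$-fixed point inside $B$. The clean way is to use a Chebyshev-type center. For a bounded set $C \subseteq X$ define the radius functional $r(y) = \sup_{c \in C} \norm{y - c}$ for $y \in X$; one checks $r$ is convex, continuous, and $r(y) \to \infty$ as $\norm y \to \infty$. Apply this with $C = B$: the sublevel sets $\{y \in X \del r(y) \leqslant \lambda\}$ are bounded, closed, and convex, hence weakly compact by reflexivity, so $r$ attains its infimum $\lambda_0 = \inf_X r$ on a nonempty, bounded, closed, convex set $M = \{y \del r(y) = \lambda_0\}$. The key point is that $M$ is a \emph{singleton}: if $y_1 \neq y_2$ both lie in $M$, then by uniform convexity of the renormed space one gets $r\big(\tfrac12(y_1+y_2)\big) < \lambda_0$, a contradiction --- but since $X$ is merely reflexive, not uniformly convex, I would instead first apply a theorem of Troyanski (or just the fact that every separable reflexive space admits an equivalent uniformly convex, or at least strictly convex, norm) to renorm $X$; the affine action remains continuous and affine for the new norm, and orbits remain bounded, so there is no loss of generality. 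With a strictly convex norm the midpoint argument gives uniqueness of the minimizer. Finally, $M$ is $G$-invariant: each $\rho(g)$ is a $B$-preserving affine \emph{isometry} (for the renormed $X$ the linear part $\pi(g)$ need not be isometric --- this is the subtlety, see below), so $r(\rho(g)y) = \sup_{c \in B}\norm{\rho(g)y - c} = \sup_{c\in B}\norm{\rho(g)y - \rho(g)c} = \ldots$, and if $\pi(g)$ is an isometry this equals $r(y)$; hence $\rho(g)$ maps $M$ to $M$, and since $M = \{y_0\}$ we get $\rho(g) y_0 = y_0$ for all $g$, i.e.\ a fixed point.

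The main obstacle is precisely the italicized point: property (OB) bounds orbits but does not by itself make the linear part $\pi$ act by isometries, and after renorming for strict convexity one certainly cannot expect $\pi(g)$ to be isometric. The fix is to first average the norm over the group. Because $\pi \colon G \to \GL X$ is a continuous linear representation on a separable Banach space, Theorem \ref{OB}(3) gives $\sup_{g \in G}\norm{\pi(g)} =: K < \infty$, and then $\norm{\cdot}' := \sup_{g \in G}\norm{\pi(g)\cdot}$ defines an equivalent norm on $X$ (satisfying $\norm{\cdot} \leqslant \norm{\cdot}' \leqslant K\norm{\cdot}$) for which every $\pi(g)$ is a linear isometry. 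So the order of operations is: (i) invoke boundedness of $\pi$ to get the $G$-invariant equivalent norm; (ii) renorm again, if necessary, to also be strictly convex while staying $G$-invariant --- here one must check that the strictly-convexification procedure (e.g.\ the classical construction interpolating with the norm of a strictly convex space into which $X$ embeds, or Asplund averaging) can be carried out $G$-equivariantly, which it can since $\pi$ is isometric for the norm from step (i) and the construction is canonical; (iii) run the Chebyshev-center argument above using boundedness of the orbit $G \cdot 0$ from Theorem \ref{OB}(2). Reflexivity enters only through weak compactness of bounded closed convex sets to guarantee the minimizer exists; strict convexity and $G$-invariance of the final norm guarantee it is unique and $G$-fixed. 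I would present steps (i)--(iii) in that order, flagging (ii) as the one place a small amount of care with an external renorming theorem is needed.
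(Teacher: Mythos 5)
Your step (i) and the construction of the bounded, closed, convex, $G$-invariant set coincide exactly with the paper's proof: renorm by $\triple x=\sup_{g\in G}\norm{\pi(g)x}$ using Theorem \ref{OB}(3), so that $\rho$ becomes affine \emph{isometric}, and take $C=\overline{\rm conv}(\rho(G)0)$, which is weakly compact by reflexivity. At that point the paper simply invokes the Ryll--Nardzewski fixed point theorem (a group of affine isometries of a weakly compact convex set has a common fixed point), and is done. Where you diverge --- replacing that citation by a Bruhat--Tits--style Chebyshev-center argument --- is where the proof breaks.

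The gap is in the uniqueness of the minimizer of $r(y)=\sup_{c\in B}\norm{y-c}$. Strict convexity only gives, for each \emph{individual} $c$, that $\norm{\tfrac12(y_1+y_2)-c}<\lambda_0$ unless $y_1-c$ and $y_2-c$ are proportional; it gives no uniform improvement, so the supremum over $c\in B$ can still equal $\lambda_0$ and the set $M$ of minimizers need not be a singleton. To make the midpoint argument work you need a \emph{uniform} modulus --- uniform convexity, or at least uniform convexity in the direction $y_1-y_2$ --- and your proposed source for it is false: by Enflo's theorem a Banach space admits an equivalent uniformly convex norm if and only if it is superreflexive, and there are separable reflexive spaces (e.g.\ $\big(\bigoplus_n\ell_1^n\big)_{\ell_2}$) that are not superreflexive. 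Troyanski's theorem yields only LUR renormings, which are likewise a pointwise notion and do not give the uniform estimate over the sup. (A correct patch exists --- every separable space has an equivalent norm that is uniformly convex in every direction, which does suffice since $y_1-y_2$ is a fixed direction, and such a renorming can be made invariant under a bounded group of operators --- but this requires genuinely more renorming theory than you cite, and none of it is needed: once you have affine isometries of a weakly compact convex set, Ryll--Nardzewski finishes the argument directly, which is exactly what the paper does.) Your step (ii) as written, asserting that the strictly-convexification is ``canonical'' and hence $G$-equivariant, is also unjustified: the standard constructions (e.g.\ interpolating with an injection into $\ell_2$) are not equivariant, and the supremum over $G$ of strictly convex norms need not be strictly convex.
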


\begin{proof}
Assume $G$ has property (OB) and that  $\rho\colon G\til \Aff X$ is a continuous affine representation on a separable reflexive Banach space $X$ with linear part $\pi\colon G\til \GL X$ and associated cocycle $b\colon G\til X$.
Since $G$ has property (OB), the linear part $\pi$ is bounded, i.e., $\sup_{g\in G}\norm{\pi(g)}<\infty$, and we can therefore define a new equivalent norm $\triple\cdot$ on $X$ by
$$
\triple x=\sup_{g\in G}\norm{\pi(g)x},
$$
i.e., inducing the original topology on $X$. By construction, $\triple\cdot$ is $\pi(G)$-invariant and thus $\rho$ is an affine isometric representation of $G$ on $(X,\triple\cdot)$. By property (OB), every orbit $\rho(G)x\subseteq X$ is bounded and so, e.g., $C=\overline{\rm conv}(\rho(G)0)$ is a bounded closed convex set invariant under the affine action of $G$. As $X$ is reflexive, $C$ is weakly compact, and thus $G$ acts by affine isometries on the weakly compact convex set $C$ with respect to the norm $\triple\cdot$. It follows by the Ryll-Nardzewski fixed point theorem (Thm 12.22 \cite{fabian}) that $G$ has a fixed point on $X$. 
\end{proof}

Recall that a topological group $G$ is said to have {\em property (FH)} if any continuous affine isometric action on a Hilbert space has a fixed point, or, equivalently, has bounded orbits (see \cite{BHV}). So clearly property (ACR) is stronger than (FH). Similarly, fixed point properties for affine isometric actions on a Banach space $X$ have been studied for a variety of other classes of Banach spaces such as $L^p$  and uniformly convex spaces \cite{furman,cornulier}. It is worth noting that (ACR) characterises the compact groups within the class of locally compact Polish groups. This follows for example from the fact, proved by U. Haagerup and A. Przybyszewska \cite{haagerup}, that any locally compact Polish group $G$ admits a continuous affine isometric action on the strictly convex and reflexive space $\big(\bigoplus_nL^{2n}(G)\big)_2$ such that $\norm{g\cdot \xi-\xi}\Lim{g\til \infty}\infty$ for all $\xi \in \big(\bigoplus_nL^{2n}(G)\big)_2$. Also, as will be seen in Theorem \ref{loc comp equiv}, any non-compact locally compact Polish group $G$ admits a continuous affine (not necessarily isometric) action on a separable Hilbert space
with unbounded orbits.

A word of caution is also in its place with regards to continuous affine actions. While for an isometric action, either all orbits are bounded or no orbit is bounded, this is certainly not so for a general affine action on a Banach space. E.g., one can have a fixed point and still have unbounded orbits.

\begin{exa} There are examples of Polish groups that admit no non-trivial continuous representations in $\GL{\ku H}$ or even in $\GL{X}$, where $X$ is any separable reflexive Banach space. For example, note that the group of increasing homeomorphisms of $[0,1]$ with the topology of uniform convergence, $G={\rm Homeo}_+([0,1])$, has property (OB) (one way to see this is to note that the oligomorphic and hence Roelcke precompact group ${\rm Aut}(\Q,<)$ maps onto a dense subgroup of ${\rm Homeo}_+([0,1])$). Therefore, any continuous representation $\pi\colon G\til  \GL{X}$ must be bounded, whence
$$
\triple x=\sup_{g\in G}\norm{\pi(g)x}
$$
is an equivalent $G$-invariant norm on $X$. Though the norm may change, $X$ is of course still reflexive under the new norm and so $\pi$ can be seen as a strongly continuous linear isometric representation of $G$ on a reflexive Banach space. However, as shown by M. Megrelishvili \cite{megrelishvili}, any such representation is trivial, and so $\pi(g)=\Id$ for any $g\in G$.
\end{exa}

Though we have no example to this effect, the above example does seem to indicate that the class of reflexive spaces is too small to provide a characterisation of property (OB) and thus the implication (OB)$\saa$(ACR) should not reverse in general.


\subsection{Property (${\rm OB}_k$) and examples} We shall now consider a strengthening of property (OB) along with the class of Polish SIN groups.

\begin{defi}
Let $k\geqslant 1$. A Polish group $G$ is said to have {\em property (${\rm OB}_k$)} if whenever 
$$
W_0\subseteq W_1\subseteq W_2\subseteq\ldots \subseteq G
$$
is a exhaustive sequence of open subsets, then $G=W_n^k$ for some $n\geqslant 1$.
\end{defi}
Again, property (${\rm OB}_k$) has the following reformulation
\begin{itemize}
\item For any open symmetric $V\neq\tom$ there is a finite set $F\subseteq G$ such that $G=(FV)^k$.
\end{itemize}

Recall that a topological group is called a {\em SIN} group (for {\em small invariant neighbourhoods}) if it has a neighbourhood basis at the identity consisting of conjugacy invariant sets, or, equivalently, if the left and right uniformities coincide. For Polish groups, by a result of V. Klee, this is equivalent to having a compatible invariant metric, which necessarily is complete. Now, if $V\subseteq G$ is a conjugacy invariant neighbourhood of $1$, then $FV=VF$ for any set $F\subseteq G$, so we have the following set of equivalences.

\begin{prop}Let $G$ be a Polish SIN group.
Then the following conditions are equivalent.
\begin{enumerate}
\item $G$ is compact,
\item $G$ is Roelcke precompact,
\item $G$ has property ${\rm (OB}_k)$ for some $k\geqslant 1$.
\end{enumerate}
Also, the following conditions are equivalent.
\begin{enumerate}
\item $G$ is $\ku E_{ts}$-bounded,
\item $G$ is bounded,
\item $G$ is Roelcke bounded,
\item $G$ has property (OB).
\end{enumerate}
\end{prop}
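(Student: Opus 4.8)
The plan is to reduce everything to one fact: a Polish SIN group $G$ carries, by Klee's theorem, a compatible \emph{bi-invariant} metric $d$, which is automatically complete, and the open balls $V_\eps=\{g\in G\mid d(g,1)<\eps\}$ form a neighbourhood basis at $1$ consisting of \emph{symmetric, conjugacy-invariant} sets. As noted just before the statement, $FV_\eps=V_\eps F$ for every $F\subseteq G$, so the products occurring in the various covering conditions collapse: $V_\eps FV_\eps=FV_\eps^2$, $(FV_\eps)^k=F^kV_\eps^k$, and $V_\eps^kFV_\eps^k=FV_\eps^{2k}$, where $F^k=\{f_1\cdots f_k\mid f_i\in F\}$ remains finite. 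Moreover, by the triangle inequality and left-invariance of $d$ one has $V_\eps^k\subseteq V_{k\eps}$, while $gV_{k\eps}=\{h\in G\mid d(h,g)<k\eps\}$. Since each of these covering conditions is monotone under shrinking $V$ and the sets $V_\eps$ are symmetric and open, it suffices to argue with the basis $\{V_\eps\}_{\eps>0}$.

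Consider the first list. The implication $(1)\Rightarrow(2)$ is trivial: if $G=FV_\eps$ then, as $1\in V_\eps$, $G=FV_\eps\subseteq V_\eps FV_\eps\subseteq G$. The implication $(2)\Rightarrow(3)$ is an elementary reindexing --- from $G=V^2FV^2$ (Roelcke precompactness applied to the neighbourhood $V^2$) one reads off $G=\big((F\cup\{1\})V\big)^4$, so $G$ has property $({\rm OB}_4)$. The content is $(3)\Rightarrow(1)$. Assuming $({\rm OB}_k)$, for every $\eps>0$ there is a finite $F$ with
$$
G=(FV_\eps)^k=F^kV_\eps^k\subseteq F^kV_{k\eps}=\bigcup_{h\in F^k}\{g\in G\mid d(g,h)<k\eps\},
$$
i.e. $G$ is covered by at most $|F|^k$ balls of radius $k\eps$. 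Letting $\eps\to0$ shows that $(G,d)$ is totally bounded; being complete, it is compact.

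For the second list, $(1)\Leftrightarrow(2)$ holds because, for a symmetric conjugacy-invariant $V$, the conditions $x_{i+1}\in x_iV$ and $x_{i+1}\inv\in x_i\inv V$ coincide: the first says $x_i\inv x_{i+1}\in V$, the second (by symmetry of $V$) says $x_{i+1}x_i\inv\in V$, and conjugating by $x_i$ turns one into the other while fixing $V$; hence $\ku E_{ts}$-boundedness and boundedness request literally the same data. Next, $(2)\Rightarrow(3)$ is immediate since $FV_\eps^k\subseteq V_\eps^kFV_\eps^k$; and $(3)\Rightarrow(4)$ follows from Proposition \ref{Roelcke bounded}, since a Roelcke bounded group has property (OB) on each of its open subgroups, in particular on $G$ itself. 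Finally $(4)\Rightarrow(2)$: by Theorem \ref{OB}, property (OB) yields, for the open symmetric set $V_\eps$, a finite $F$ and a $k$ with $G=(FV_\eps)^k=F^kV_\eps^k$; writing $F'=F^k$ this reads $G=F'V_\eps^k$, and as $\eps$ ranges over the basis this is precisely the statement that $G$ is bounded. This closes the cycle $(2)\Rightarrow(3)\Rightarrow(4)\Rightarrow(2)$ and, together with $(1)\Leftrightarrow(2)$, proves all four equivalences.

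The only step that is not pure bookkeeping is $(3)\Rightarrow(1)$ of the first list: the point is that, once a bi-invariant metric is at hand, an $({\rm OB}_k)$-covering is literally a covering of $G$ by finitely many balls of radius $k\eps$, and since $\eps$ is arbitrary this forces total boundedness --- hence, by completeness, compactness. Everything else rests only on the identity $FV=VF$ for conjugacy-invariant $V$, together with the observation that each of the relevant covering properties can be checked on a neighbourhood basis at $1$.
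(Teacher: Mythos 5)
Your proposal is correct and follows essentially the same route as the paper: the whole argument rests on the identity $FV=VF$ for conjugacy-invariant $V$, the only substantive implication being $({\rm OB}_k)\Rightarrow$ compact, which both you and the paper obtain by collapsing $(FV)^k$ to $F^kV^k$ and deducing precompactness (your phrasing via total boundedness of the complete bi-invariant metric is the same fact). The remaining implications are the bookkeeping the paper leaves implicit, and your verifications of them are accurate.
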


\begin{proof}
The only non-trivial fact is that property ${\rm (OB}_k)$ implies compactness. But if $G$  has property ${\rm (OB}_k)$, then 
for any conjugacy invariant open set $V\ni 1$, there is a finite set $F\subseteq G$ such that $G=(VF)^k=V^kF^k$. So if $G$ is a SIN group, then for any open set $V\ni 1$ there is a finite set $F\subseteq G$ such that $G=VF$, i.e., $G$ is precompact. Since $G$ is Polish, it will actually be compact.
\end{proof}

\begin{exa} 
Let $E$ be the orbit equivalence relation induced by a measure pre\-ser\-ving ergodic automorphism of $[0,1]$ and let $[E]$ denote the corresponding {\em full group}, i.e., the group of measure-preserving automorphisms $T\colon [0,1]\til [0,1]$ such that $xET(x)$ for almost all $x\in [0,1]$, equipped with the invariant metric 
$$
d(T,S)=\lambda\big(\{x\in [0,1]\del T(x)\neq S(x)\}\big).
$$
Then $[E]$ is a non-compact,  Polish SIN group and, as shown in \cite{miller}, ${E}$ has property (OB). Thus, $[E]$ cannot have property (${\rm OB_k}$) for any $k\geqslant 1$.
\end{exa}

\begin{exa}
For another example, consider the separable commutative $C^*$-alge\-bra $\big(C(2^\N,\C), \norm\cdot_\infty\big)$
and the  closed multiplicative subgroup $C(2^\N,\T)$ consisting of all continuous maps from Cantor space $2^\N$ to the circle group $\T$.  So $C(2^\N,\T)$ is an abelian Polish group. Moreover, we claim that for any neighbourhood $V$ of the identity in $C(2^\N,\T)$, there is a $k$ such that any element $g\in C(2^\N,\T)$ can be written as $g=f^k$ for some $f\in V$. 

To see this, find some $k>1$ such that any continuous
$$
 f\colon 2^\N\til U=\{e^{2\pi i \alpha}\in\T\del -1/k<\alpha<1/k\;\}
$$ 
belongs to $V$. Fix $g\in C(2^\N,\T)$ and note that 
$$
A=\{x\in 2^\N\del g(x)\notin U\}
$$
and 
$$
B=\{x\in 2^\N\del g(x)=1\}
$$
are disjoint closed subsets of $2^\N$ and can therefore be separated by a clopen set $C\subseteq 2^\N$, i.e., $A\subseteq C\subseteq \;\sim\! B$. Then for any $x\in C$, writing $g(x)=e^{2\pi i\alpha}$ for $0<\alpha<1$, we set
$$
f(x)=e^{\frac{2\pi i\alpha}{k}}
$$
Also, if $x\notin C$, write $g(x)=e^{2\pi i \alpha}$ for some $-1/k<\alpha<1/k$ and set 
$$
f(x)=e^{\frac{2\pi i\alpha}{k}}.
$$
Thus, $f^k(x)=g(x)$ for all $x\in C$ and, since $C$ is clopen, $f$ is easily seen to be continuous.
Moreover, since $f$ only takes values in $U$, we see that $f\in V$, which proves the claim.

In particular, this implies that $C(2^\N,\T)$ is a bounded group. On the other hand, the usual Haar basis in $C(2^\N,\C)$, i.e., the functions $h_n\in  C(2^\N,\C)$ defined by $h_n(x)=1$ if the $n$th coordinate of $x$ is $1$ and $0$ otherwise, form an infinite family of distance $1$ from each other, so $C(2^\N,\T)$ is not compact.
\end{exa}

\begin{prob}As was shown in \cite{OB}, ${\rm Homeo}(S^2)$ has property (OB) and, as it has a connected open subgroup of finite index, it is actually bounded. On the other hand, it is an open problem due to V. Uspenski\u\i{} whether ${\rm Homeo}(S^2)$ is Roelcke precompact. In fact, it seems to be unknown even whether it has property  (${\rm OB_k}$) for any $k\geqslant 1$.
\end{prob}


\subsection{Non-Archimedean Polish groups}\label{non-archimedean}
Of special interest in logic are the automorphism groups of countable first order structures, that is, the closed subgroups of the group of all permutations of the natural numbers, $S_\infty$. Recall that the topology on $S_\infty$ is  the topology of pointwise convergence on $\N$ viewed as a discrete space. Thus, a neighbourhood basis at the identity in $S_\infty$ consists of the pointwise stabilisers (or {\em isotropy} subgroups) of finite subsets of $\N$, which are themselves subgroups of $S_\infty$. As is wellknown and easy to see, the property of having a neighbourhood basis at $1$ consisting of open subgroups actually isomorphically  characterises  the closed subgroups of $S_\infty$ within the class of Polish groups.

\begin{defi}
A Polish group $G$ is {\em non-Archimedean} if it has a neighbourhood basis at $1$ consisting of open subgroups. Equivalently, $G$ is non-Archimedean if it is isomorphic to a closed subgroup of $S_\infty$.
\end{defi}

\begin{prop}Let $G$ be a non-Archimedean Polish group.
Then 
\begin{enumerate}
\item $G$ is  Roelcke precompact if and only if it is Roelcke bounded,
\item $G$ is bounded if and only if it is compact.
\end{enumerate}
\end{prop}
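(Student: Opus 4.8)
The plan is to exploit the fact that in a non-Archimedean Polish group the open subgroups form a neighbourhood basis at $1$, so that all four covering properties can be tested against open subgroups rather than arbitrary open neighbourhoods. For (1), recall that Roelcke boundedness always implies Roelcke precompactness is weaker in general, so one direction is trivial; I only need to deduce Roelcke precompactness from Roelcke boundedness. By Proposition \ref{Roelcke bounded}, a Roelcke bounded Polish group has the property that every open subgroup $H$ has property (OB) and every double coset space $H\backslash G\,\slash\,H$ is finite. Now let $V\ni 1$ be open; shrink $V$ to an open subgroup $U\leqslant V$. Since $U$ has property (OB) and $U$ is a \emph{subgroup}, item (6)(ii) of Theorem \ref{OB} — or simply that a group with property (OB) generated by one of its symmetric open subsets equals a finite power of it — applied to the symmetric open generating set $U$ of the group $U$ gives $U=U^k=U$ for $k=1$; that is, there is nothing to iterate because $U$ is already a subgroup. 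Then finiteness of $U\backslash G\,\slash\,U$ yields a finite $F$ with $G=UFU\subseteq VFV$, so $G$ is Roelcke precompact. The key point, which I would state carefully, is that for an open \emph{subgroup} $U$ one has $U^k=U$ for all $k$, so the distinction between Roelcke precompactness ($G=VFV$) and Roelcke boundedness ($G=V^kFV^k$) collapses.

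For (2), again only one direction needs work: compactness trivially implies boundedness. Suppose $G$ is bounded and non-Archimedean. By Proposition \ref{bounded}, $G$ has property (OB) and every open subgroup has finite index. Fix an open subgroup $U\leqslant G$ that is small (any one from the neighbourhood basis); then $[G:U]<\infty$. As above, $U$ being an open subgroup with property (OB)—this uses Proposition 4.3 of \cite{OB}, that property (OB) passes to open \emph{finite-index} subgroups, applied to $U$ itself—satisfies $U=U^k=U$, i.e.\ $U$ is already a subgroup so trivially $G=FU$ for a finite set $F$ of coset representatives. Thus for the given open neighbourhood $V$ (containing $U$) we have $G=FV$ with $F$ finite, i.e.\ $G$ is precompact in the left uniformity. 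Since $G$ is Polish, hence Raikov complete, and precompactness in the left uniformity together with completeness gives compactness (this is the equivalence (1)$\Leftrightarrow$(2) of Theorem \ref{compact}), $G$ is compact.

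The main obstacle, such as it is, is purely bookkeeping: making sure that when I invoke "property (OB) for the open subgroup $U$" I am entitled to do so. In the proof of (2) this is clean because $U$ has finite index, so Proposition 4.3 of \cite{OB} applies directly. In the proof of (1) the open subgroup $U$ may have infinite index, but Proposition \ref{Roelcke bounded}(3)(b) \emph{hypothesises} property (OB) for all open subgroups, so it is available by assumption and no appeal to the (open) Problem about inheritance of (OB) is needed. The remaining subtlety worth a sentence is the observation that the equivalences are genuinely special to the non-Archimedean setting: it is precisely the existence of arbitrarily small open \emph{subgroups} $U$, for which $U^k=U$, that forces the "$V$ versus $V^k$" gap between the precompact and bounded versions of each property to vanish.
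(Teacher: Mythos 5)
Your proof is correct and follows essentially the same route as the paper: reduce both covering properties to open subgroups $U$, where $U^k=U$ collapses the bounded/precompact distinction, then use finiteness of double cosets (for (1)) and finite index plus completeness (for (2)). The appeals to property (OB) for $U$ are harmless but superfluous, as you yourself note --- only Proposition \ref{Roelcke bounded}(3)(a) and Proposition \ref{bounded}(3) are actually used.
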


\begin{proof}
Note that, for a closed subgroup $G$ of $S_\infty$, in the definition of Roelcke precompactness it suffices to quantify over open subgroups $V\leqslant G$. So $G$ is Roelcke precompact if and only if for any open subgroup $V\leqslant G$, the double coset space ${\rm V}\backslash {\rm G}\,\slash \,{\rm V}$ is finite, which is implied by Roelcke boundedness. As also Roelcke precompactness implies Roelcke boundedness, (1) follows.

For (2), it suffices to notice that if $G$ is not compact, then it has an open subgroup of infinite index and thus $G$ cannot be bounded. 
\end{proof}

\begin{exa} By Theorem 5.8 of \cite{OB}, the isometry group of the rational Urysohn metric space $\Q\U_1$ of diameter $1$ has property (OB) even as a discrete group and thus also as a Polish group. However, since it acts continuously and transitively on the discrete set $\Q\U_1$, but not oligomorphically, the group cannot be Roelcke precompact. 
\end{exa}

\begin{exa}
Note that if $G$ is a non-Archimedean Polish group, then $G$ is SIN if and only if $G$ has a neighbourhood basis at the identity consisting of {\em normal} open subgroups. For this, it suffices to note that if $U$ is a conjugacy invariant neighbourhood of $1$, then $\langle U\rangle$ is a conjugacy invariant open subgroup of $G$, i.e., $\langle U\rangle$ is normal in $G$.
In this case, we can find a decreasing series
$$
G\geqslant V_0\geqslant V_1\geqslant V_2\geqslant \ldots
$$
of normal open subgroups of $G$ forming a neighbourhood basis at $1$, and, moreover, this characterises the class of Polish, non-Archimedean, SIN groups. Note also that such a $G$ is compact if and only if all the quotients $G/V_n$ are finite. Moreover, as any countably infinite group admits a continuous affine action on a separable Hilbert space without fixed points (cf. Theorem \ref{loc comp equiv}), we see that $G$ is compact if and only if it has property (ACR).
\end{exa}

Of course, not all non-Archimedean Polish groups have non-trivial countable quotients, but, as we shall see, property (OB) can still be detected by actions on countable sets.

\begin{lemme}
Let $G$ be a non-Archimedean Polish group without property (OB). Then $G$ acts continuously and by isometries on a countable discrete metric space $(X,d)$ with unbounded orbits.
\end{lemme}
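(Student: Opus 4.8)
The plan is to exploit the characterisation of property (OB) failure in terms of continuous left-invariant \'ecarts, together with the non-Archimedean structure of $G$. Since $G$ lacks property (OB), by Theorem \ref{OB} (equivalence of (1) and (8)) there is a continuous left-invariant \'ecart $\partial$ on $G$ that is unbounded. First I would combine $\partial$ with the group's neighbourhood basis: fix a decreasing neighbourhood basis $G\geqslant V_0\geqslant V_1\geqslant \ldots\ni 1$ consisting of open subgroups (possible since $G$ is non-Archimedean), and note that $\partial$ being continuous means that for each $n$ there is some $m(n)$ with $\partial(g,1)<2^{-n}$ for all $g\in V_{m(n)}$. The idea is then to \emph{discretise} $\partial$ along the coset structure of these subgroups.

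The key step is to replace $\partial$ by an honest left-invariant metric on a countable quotient-like object. Concretely, I would pass to an open subgroup $V=V_m$ (for suitable $m$) on which $\partial$ takes values close to $0$, and consider the countable discrete homogeneous space $X=G/V$ of left cosets, which is countable because $V$ is open in a separable group. I would then try to push $\partial$ down to a metric on $X$ by setting $d(gV,hV)=\inf\{\partial(g',h')\del g'\in gV,\ h'\in hV\}$, or a symmetrised/path-metric variant thereof. Left-invariance of $\partial$ makes this well-defined and $G$-invariant, and the action of $G$ on $X$ by left translation is continuous (it is continuous on the discrete set $X$ precisely because point-stabilisers — conjugates of $V$ — are open). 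The orbit of the coset $1\cdot V$ is all of $X$, so I need this orbit to be unbounded, i.e. $\sup_{g}d(gV,V)=\infty$.

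The main obstacle — and the step requiring genuine care — is ensuring that the quotient metric $d$ remains unbounded: a priori, dividing by the open subgroup $V$ could collapse all the unboundedness of $\partial$, since cosets might be ``$\partial$-large''. To handle this I would argue by contradiction: if for \emph{every} open subgroup $V$ in the basis the induced quotient pseudometric were bounded, I would reconstruct from these bounds a bound on $\partial$ itself, contradicting unboundedness. More precisely, if $d_{V}(gV,V)\leqslant C$ for all $g$, and $\partial$ is small (say $<1$) on $V$, then any $g\in G$ lies within $\partial$-distance roughly $C+1$ of some fixed coset representative; since there could be many cosets this does not immediately bound $\partial$, so instead I would choose $V$ small enough (using continuity of $\partial$) that $\partial$ is uniformly small on each coset — one can arrange $\partial(g,h)<\eps$ whenever $g\inv h\in V$ by picking $V$ inside a $\partial$-ball, using left-invariance — and then $d_V$ and $\partial$ differ by at most $\eps$ on representatives, so boundedness of $d_V$ would force boundedness of $\partial$. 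Thus for an appropriate $V$ the metric space $(G/V, d_V)$ is countable, discrete (every point is isolated since $V$ is open, making singletons coset-open, hence at positive $d_V$-distance from the rest once we take $d_V$ to be a genuine metric rather than pseudometric — passing to a quotient by the ``distance zero'' relation if needed, which stays countable), carries a continuous isometric $G$-action, and has an unbounded orbit. This yields the conclusion.
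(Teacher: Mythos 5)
Your overall strategy --- extract an unbounded continuous left-invariant \'ecart $\partial$ from the failure of (OB) via Theorem \ref{OB}(8), choose an open subgroup $V$ inside $\{g \del \ell(g)<\eps\}$ where $\ell(g)=\partial(g,1)$, and push $\partial$ down to the countable coset space $G/V$ --- is sound and genuinely different from the paper's proof, and your observation that $\partial$ is then uniformly $\eps$-small on each coset, so that the quotient quantity differs from $\partial$ by at most $2\eps$ on representatives and hence stays unbounded, is exactly the right point. However, two steps are not correct as written. First, the naive infimum $d_V(gV,hV)=\inf\{\partial(g',h') \del g'\in gV,\ h'\in hV\}$ need not satisfy the triangle inequality: chaining near-optimal representatives only yields $d_V(gV,fV)\leqslant d_V(gV,hV)+d_V(hV,fV)+2\eps$, because the representative of $hV$ that is near-optimal for the first pair need not match the one near-optimal for the second. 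Your hedge of passing to the ``path-metric variant'' would restore the triangle inequality but endangers precisely the unboundedness you need: individual steps between distinct cosets are not bounded below by any positive constant, so long chains could a priori collapse the path metric. Second, the discreteness argument conflates two topologies: a singleton $\{gV\}$ is open in the quotient topology because $V$ is open, but that says nothing about $gV$ being at positive $d_V$-distance from the other cosets, and quotienting by the distance-zero relation does not produce a discrete metric space either, since distinct classes may still be at arbitrarily small positive distance.

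Both defects are repaired by one standard correction that you stop just short of: set $d'(gV,hV)=d_V(gV,hV)+2\eps$ for $gV\neq hV$ and $d'(gV,gV)=0$. The deficit of at most $2\eps$ in the triangle inequality is absorbed by the added constants, so $d'$ is a genuine $G$-invariant metric; distinct cosets are at distance at least $2\eps$, so the space is uniformly discrete and its metric topology is the discrete quotient topology, whence the left-translation action is continuous because point stabilisers $gVg\inv$ are open; and $d'(gV,V)\geqslant \ell(g)-2\eps$ is still unbounded. With that emendation your argument is complete. For comparison, the paper takes a more combinatorial route: it picks an open subgroup $V$ witnessing the failure of the covering reformulation (5) of (OB), builds an exhaustion $B_k=(F_kVF_k)^{k!}$, and uses the integer-valued step function $\delta(gV,fV)=\min\big(k \del Vf\inv gV\subseteq B_k\big)$; since $\delta\geqslant 1$, the induced path metric is automatically a discrete metric and satisfies the triangle inequality by construction, and unboundedness follows from $G\neq B_{k+1}$ rather than from an \'ecart. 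Your approach buys a shorter argument once the metric is patched, at the cost of invoking the \'ecart characterisation; the paper's buys discreteness and the triangle inequality for free.
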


\begin{proof}
Since $G$ fails (OB) there is an open subgroup  $V\leqslant G$ such that for any finite subset $F\subseteq G$ and any number $k\geqslant 1$ we have $G\neq (VF)^k$. Let also $\{1\}=F_1\subseteq F_2\subseteq\ldots\subseteq G$ be an increasing sequence of finite symmetric subsets  whose union is dense in $G$ and set 
$$
B_k=(F_kVF_k)^{k!}.
$$
Then $B_k$ is symmetric, $1\in B_k\subseteq (B_k)^k\subseteq B_{k+1}$ and any two-sided coset $VaV$ is a subset of all but finitely many  $B_k$. 

Consider now the left coset space $G/V$ and define $\delta\colon G/V\til [1,\infty\,[$ by 
$$
\delta(gV,fV)=\min\big(k\del Vf\inv gV\subseteq B_k\big).
$$
Define also a metric $d$ on $G/V$ by $d(gV,gV)=0$ and 
$$
d(gV,fV)=\inf\big(\sum_{i=0}^{k-1}\delta(h_iV,h_{i+1}V)\del h_0V,h_1V,\ldots, h_kV \text{ is a path from }gV \text{ to }fV\big),
$$
whenever $gV\neq fV$. Clearly both $\delta$ and $d$ are invariant under the left-shift action by $G$ on $G/V$ and $d(gV,fV)\geqslant 1$ for any distinct $gV$ and $fV$.

Now assume towards a contradiction that $d(gV,V)\leqslant k$ for all $gV\in G/V$. Then certainly, by the definition of $d$, from any coset $gV$ there is a path $h_0V,h_1V,\ldots, h_kV$ ending at $h_kV=V$ with $\delta(h_iV,h_{i+1}V)\leqslant k$ for all $i$. It follows that
\[\begin{split}
g&\in VgV\\
&=Vh_k\inv h_0V\\
&\subseteq \big(Vh_k\inv h_{k-1}V\big)\cdot \big(Vh_{k-1}\inv h_{k-2}V\big)\cdot\ldots\cdot \big(Vh_1\inv h_0V\big)\\
&\subseteq \underbrace{B_k\cdot B_k\cdot\ldots\cdot B_k}_{k \text{ times}}\\
&\subseteq B_{k+1},
\end{split}\]
contradicting that $G\neq B_{k+1}$.

We have therefore constructed a transitive action of $G$ by isometries on the discrete metric space $(G/V,d)$ of infinite diameter. Moreover, as $V$ is a clopen subgroup of $G$, the action is continuous.
\end{proof}

\begin{thm}
Assume a Polish group $G$ acts continuously and by isometries on a countable discrete metric space $(X,d)$ with unbounded orbits.
Then $G$ admits an unbounded continuous linear representation $\pi\colon G\til \GL{\ku H}$ on a separable Hilbert space. 
\end{thm}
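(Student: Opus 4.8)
The plan is to realise $G$ as a group of \emph{weighted permutation operators} on $\ell^2(X)$. Fix once and for all a point $e\in X$ lying in an orbit of infinite diameter, and set $\ku H=\ell^2(X)$, which is a separable Hilbert space since $X$ is countable; write $(\delta_x)_{x\in X}$ for its standard orthonormal basis. Let $w\colon X\til[1,\infty\,[$ be the weight $w(x)=2^{d(x,e)}$, and define $\pi(g)$ on basis vectors by
$$
\pi(g)\delta_x=\frac{w(gx)}{w(x)}\,\delta_{gx},
$$
extended by linearity. A direct computation on basis vectors gives $\pi(1)=\Id$ and $\pi(gh)\delta_x=\pi(g)\pi(h)\delta_x$ for all $x$, so $\pi$ will be a representation of $G$ in $\GL{\ku H}$ as soon as each $\pi(g)$ is seen to be a bounded invertible operator, and then $\pi(gh)=\pi(g)\pi(h)$ follows by density.

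The crucial point is that, since $g$ acts by an isometry, $d(gx,e)\leqslant d(gx,ge)+d(ge,e)=d(x,e)+d(ge,e)$, and hence $\frac{w(gx)}{w(x)}=2^{d(gx,e)-d(x,e)}\leqslant 2^{d(ge,e)}$ for every $x\in X$. Since $g$ is a bijection of $X$, $(\delta_{gx})_{x\in X}$ is again an orthonormal basis, so $\pi(g)$ — which multiplies the $x$-coordinate by $\frac{w(gx)}{w(x)}$ and relabels — extends to a bounded operator with $\norm{\pi(g)}=\sup_{x\in X}\frac{w(gx)}{w(x)}\leqslant 2^{d(ge,e)}<\infty$; applying the same to $g\inv$ shows $\pi(g)\inv=\pi(g\inv)$ is bounded, so $\pi(g)\in\GL{\ku H}$. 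Note moreover that for $g$ in the pointwise stabiliser $G_e$ — an open subgroup, since $X$ is discrete and $h\mapsto he$ is continuous — one has $d(gx,e)=d(x,e)$ for all $x$, whence $\norm{\pi(g)}=1$; more generally $\norm{\pi(g)}\leqslant 2^{d(g_0e,e)}$ for all $g\in g_0G_e$, so $\norm{\pi(g)}$ is bounded on a neighbourhood of each point of $G$, in particular of $1$.

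It then remains to verify strong continuity and unboundedness. For continuity, if $\eta\in\ku H$ is finitely supported, say on $\{x_1,\ldots,x_n\}$, and $g_0\in G$, then $\pi(g)\eta=\pi(g_0)\eta$ for every $g$ in the open neighbourhood $g_0\cdot(G_{x_1}\cap\ldots\cap G_{x_n})$ of $g_0$, so $g\mapsto\pi(g)\eta$ is locally constant; since such $\eta$ are dense in $\ku H$ and $\norm{\pi(g)}$ is locally bounded, a routine $\eps/3$-approximation (as in Section~\ref{topologies}) upgrades this to strong continuity of $\pi$ on all of $\ku H$, and together with the local norm bound near $1$ this means $\pi$ is a continuous representation. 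For unboundedness, $\norm{\pi(g)}\geqslant\norm{\pi(g)\delta_e}=\frac{w(ge)}{w(e)}=2^{d(ge,e)}$, so picking $g_n\in G$ with $d(g_ne,e)\til\infty$ — possible since the orbit of $e$ is unbounded — gives $\norm{\pi(g_n)}\til\infty$ and $\sup_{g\in G}\norm{\pi(g)}=\infty$. The main thing to get right is the construction itself: all boundedness assertions hinge on the inequality $|d(gx,e)-d(x,e)|\leqslant d(ge,e)$, i.e.\ on the action being \emph{isometric}, and it is exactly the exponential weight (equally well $w(x)=1+d(x,e)$) that converts this additive control into the multiplicative control of operator norms that we need; once this is in place no further difficulty arises.
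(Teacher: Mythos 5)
Your proof is correct and follows essentially the same route as the paper: both realise $G$ by weighted permutation operators $\delta_x\mapsto \frac{w(gx)}{w(x)}\delta_{gx}$ on $\ell^2(X)$, with the operator norms controlled via the triangle inequality $|d(gx,e)-d(x,e)|\leqslant d(ge,e)$ and made unbounded along a sequence with $d(g_ne,e)\til\infty$. The only difference is the choice of weight ($2^{d(x,e)}$ versus the paper's roughly logarithmic $\sigma(x)=\min(k\geqslant 2\del d(x,p)\leqslant 2^k)$), which is immaterial, and your verification of strong continuity via the open stabilisers $G_{x_1}\cap\ldots\cap G_{x_n}$ is if anything more detailed than the paper's.
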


\begin{proof}
Suppose that $(X,d)$ is a countable discrete metric space on which $G$ acts continuously by isometries and fix some point $p\in X$. We define a function $\sigma$ on $X$ by setting 
$$
\sigma(x)=\min (k\geqslant 2\del d(x,p)\leqslant 2^k)
$$
and note that for any $g\in G$ and $x\in X$
$$
d(gx,p)\leqslant d(gx,gp)+d(gp,p)=d(x,p)+d(gp,p),
$$
and so
$$
\sigma(gx)\leqslant \max\{\sigma(gp),\sigma(x)\}+1
$$
and 
$$
\frac{\sigma(gx)}{\sigma(x)}\leqslant \sigma(gp).
$$

Let $\ell_2(X)$ denote the Hilbert space with orthonormal basis $(e_x)_{x\in X}$. For any $g\in G$, we define a bounded weighted shift $T_g$ of $\ell_2(X)$ by letting
$$
T_g(e_x)=\frac{\sigma(gx)}{\sigma(x)} e_{gx}
$$
and extending $T_g$ by linearity to the linear span of the $e_x$ and by continuity to all of $\ell_2(X)$. Note also that in this case $T_{g\inv }=T_{g}\inv$ and $T_{gf}=T_gT_f$, so the mapping $g\in G\til T_g\in GL(\ell_2(X))$ is a continuous representation.

Moreover, as
$$
\norm{T_g}\geqslant \frac{\sigma(gp)}{\sigma(p)}
$$
and $\frac{\sigma(gp)}{\sigma(p)}\til \infty$ as $d(gp,p)\til \infty$, we see that the representation $\pi\colon g\mapsto T_g$ is unbounded.
\end{proof}


In the following, if $V$ is an open subgroup of a Polish group $G$, we let $\pi_V\colon G\til {\rm Sym}(G/V)$ denote the continuous homomorphism induced by the left-shift action of $G$ on $G/V$, where ${\rm Sym}(G/V)$ is the Polish group of all permutations of $G/V$ with isotropy subgroups  of $gV\in G/V$ declared to be open. 

Also, if $V$ and $W$ are subgroups of a common group $G$, we note that 
$$
[W:W\cap V]=\text{ number of distinct left cosets of $V$ contained in $WV$}.
$$
In particular, for any $g\in G$,
\[\begin{split}
[V:V\cap V^g]
&=\text{ number of distinct left cosets of $V^g$ contained in $VV^g$}\\
&=\text{ number of distinct left cosets of $V$ contained in $VgV$}.
\end{split}\]
The {\em commensurator} of $V$ in $G$ is the subgroup
$$
{\rm Comm}_G(V)=\{g\in G\del [V:V\cap V^g]<\infty \text{ and } [V^g:V^g\cap V]=[V:V\cap V^{g\inv}]<\infty\},
$$
whereby $G={\rm Comm}_G(V)$ if and only if, for any $g\in G$, $VgV$ is a finite union of left cosets of $V$.

\begin{prop}\label{locally compact quotient}
The following are equivalent for a Polish group $G$.
\begin{enumerate}
\item  There is an open subgroup $W\leqslant G$ of infinite index such that $G={\rm Comm}_G(W)$,
\item there is an open subgroup $V\leqslant G$ of infinite index such that $\overline{\pi_V(G)}$ is a non-compact, locally compact subgroup of ${\rm Sym}(G/V)$.
\end{enumerate} 
\end{prop}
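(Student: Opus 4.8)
The plan is to prove the two implications separately, translating throughout between the combinatorics of double cosets of an open subgroup and the dynamics of the left-translation action $\pi_V\colon G\til {\rm Sym}(G/V)$ on the countably infinite set $G/V$. I would use repeatedly two elementary facts: that a closed subgroup of ${\rm Sym}(\Omega)$, with $\Omega$ countable, is compact exactly when all of its orbits on $\Omega$ are finite; and that an open subgroup of a topological group meets any dense subgroup in a subset dense in it.

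For (1)$\saa$(2) I would take $V=W$ and set $H=\overline{\pi_V(G)}$. That $H$ is non-compact is immediate, since it acts transitively on the infinite set $G/V$ and hence has an infinite orbit. For local compactness, look at the stabiliser $H_{eV}$ of the identity coset $eV$ in $H$; point stabilisers in ${\rm Sym}(G/V)$ are clopen, so $H_{eV}$ is a clopen subgroup of $H$. One checks that $\pi_V(G)\cap ({\rm Sym}(G/V))_{eV}=\pi_V(V)$, and since this is an open subgroup meeting the dense subgroup $\pi_V(G)$, it follows that $\overline{\pi_V(V)}=H_{eV}$. Now the hypothesis $G={\rm Comm}_G(V)$ says precisely that every double coset $VgV$ is a finite union of left cosets of $V$, i.e. that every $\pi_V(V)$-orbit on $G/V$ is finite; since being contained in a prescribed finite set of cosets is a clopen condition, the orbits of $\overline{\pi_V(V)}=H_{eV}$ are finite as well, so $H_{eV}$ is compact. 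Thus $H$ has a compact open subgroup and is locally compact.

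For (2)$\saa$(1), put $H=\overline{\pi_V(G)}$; being a closed subgroup of ${\rm Sym}(G/V)$ it is non-Archimedean, so the pointwise stabilisers $H_{(F)}$ of finite sets $F\subseteq G/V$ form a neighbourhood basis at $1$. Local compactness gives a compact neighbourhood of $1$, which then contains some $H_{(F)}$, and as $H_{(F)}$ is closed it is compact; enlarging $F$ we may assume $eV\in F$, say $F=\{g_1V,\dots,g_mV\}$ with $g_1=1$. I would then set $W=\pi_V\inv\big(({\rm Sym}(G/V))_{(F)}\big)=\bigcap_{i=1}^m g_iVg_i\inv$, an open subgroup contained in $V$ and so of infinite index, and compute, for arbitrary $g\in G$, the number $[W:W\cap gWg\inv]$ of left cosets of $W$ in $WgW$. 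Since $\ker\pi_V$ is a normal subgroup of $G$ contained in $W$ and in $gWg\inv$, passing to $G/\ker\pi_V$ leaves this index unchanged, so it equals $[\pi_V(W):\pi_V(W)\cap\pi_V(gWg\inv)]$; here $\pi_V(W)=\pi_V(G)\cap H_{(F)}$ and $\pi_V(gWg\inv)=\pi_V(G)\cap H_{(gF)}$. Transferring the index from these dense subgroups to their closures gives $[W:W\cap gWg\inv]=[H_{(F)}:H_{(F\cup gF)}]$, which is finite because $H_{(F\cup gF)}$ is an open subgroup of the compact group $H_{(F)}$. As $g$ was arbitrary, every double coset $WgW$ is a finite union of left cosets of $W$, i.e. $G={\rm Comm}_G(W)$.

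Most of this is routine identifications and index bookkeeping. The step I expect to require the most care is the transfer of cardinalities of orbits and coset spaces between a dense subgroup of $H$ and $H$ itself — precisely the equalities $\overline{\pi_V(V)}=H_{eV}$ in the first implication and $[\pi_V(G)\cap H_{(F)}:\pi_V(G)\cap H_{(F\cup gF)}]=[H_{(F)}:H_{(F\cup gF)}]$ in the second. Getting those right is really just the observation above about open subgroups and dense subsets, applied one coset at a time, but it is the point on which the whole argument turns.
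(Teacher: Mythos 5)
Your proof is correct and follows essentially the same route as the paper: take $V=W$ in one direction and $W=\bigcap_i g_iVg_i^{-1}$ in the other, and translate local compactness of $\overline{\pi_V(G)}$ into finiteness of the orbits of a basic clopen stabiliser via the standard fact that a subgroup of ${\rm Sym}(\Omega)$ is relatively compact exactly when all its orbits are finite. The only (harmless) cosmetic difference is in the last step of (2)$\Rightarrow$(1), where you transfer the index $[W:W\cap W^g]$ directly to $[H_{(F)}:H_{(F\cup gF)}]$ inside the compact group $H_{(F)}$, while the paper first shows $[W:W\cap V^f]<\infty$ for all $f$ and then intersects finitely many finite-index subgroups.
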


\begin{proof}
(1)$\saa$(2): Assume that (1) holds and let $V=W$. To see that $\overline{\pi_V(G)}$ is non-compact, just note that the $\pi_V(G)$-orbit of $1V\in G/V$ is infinite. For local compactness, let $H$ denote the isotropy subgroup of $1V$ in ${\rm Sym}(G/V)$. Since $H$ is clopen, it suffices to show that $H\cap \pi_V(G)$ is relatively compact, i.e., that any orbit of $\pi_V\inv(H)=V$ on $G/V$ is finite. But this follows directly from the fact that for any $gV\in G/V$, the set $VgV$ is a union of finitely many left cosets of $V$.

(2)$\saa$(1): Assume that (2) holds and find a neighbourhood $U$ of the identity in ${\rm Sym}(G/V)$ such that $U\cap \overline{\pi(G)}$ is compact. By decreasing $U$, we may suppose that $U=H_{g_1V}\cap\ldots\cap H_{g_nV}$, where $H_{g_iV}$ is the isotropy subgroup of $g_iV\in G/V$. Now, as $U$ is clopen, we have  $U\cap \overline{\pi_V(G)}=\overline{U\cap \pi_V(G)}$ and thus  $U\cap \overline{\pi_V(G)}$ is compact if and only if $U\cap \pi_V(G)$ has only finite orbits on $G/V$, i.e., 
$$
W=V^{g_1}\cap\ldots\cap V^{g_n}=\pi\inv(H_{g_1V}\cap \ldots\cap H_{g_nV})
$$
has only finite orbits on $G/V$. Moreover, this happens exactly when,  for all $f\in G$, $WfV$ is a finite union of left cosets of $V$, i.e., when $[W:W\cap V^f]<\infty$ for all $f\in G$.

Since the intersection of finitely many finite index subgroups of $W$  has finite index in $W$, it follows that for any $f\in G$,
\[\begin{split}
[W:W\cap W^f]&=[W:W\cap (V^{g_1}\cap \ldots\cap V^{g_n})^f]\\
&=[W:W\cap V^{fg_1}\cap \ldots\cap V^{fg_n}]\\
&=[W:(W\cap V^{fg_1})\cap \ldots\cap (W\cap V^{fg_n})]\\
&<\infty,
\end{split}\]
i.e., $G={\rm Comm}_G(W)$.
\end{proof}


\subsection{Locally compact groups}\label{locally compact}
We shall now turn our attention to the class of locally compact groups and see that the hierarchy of boundedness properties in Figure (1) collapses to just compactness and property (FH).
\begin{thm}\label{locally compact affine}
Let $G$ be a compactly generated, locally compact Polish group. Then $G$ admits an affine continuous representation  on separable Hilbert space $\ku H$ such that for all $\xi\in \ku H$,
$$
\norm{g \cdot \xi}\Lim{g\til \infty}\infty.
$$
\end{thm}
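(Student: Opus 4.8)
The plan is to realise $\ku H$ as a weighted $L^2$-space on $G$ carrying a \emph{weighted} left-regular representation $\pi$ for the linear part, together with a suitable cocycle $b$ for the affine part, so that $\rho=\pi\times b$ (in the notation of Section \ref{affine}) is the desired action. The point is that the linear part alone will already force $\norm{\pi(g)\xi}\to\infty$ for every $\xi\neq 0$, so the cocycle $b$ only has to handle the orbit of $0$ — and, more delicately, must be chosen so that no orbit is absorbed back into a bounded set.

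First, the statement is vacuous when $G$ is compact, so assume $G$ is non-compact. Using compact generation, fix a compact symmetric $K\ni 1$ with $\bigcup_nK^n=G$ and let $\ell(g)=\min\{n:g\in K^n\}$; this is subadditive and, crucially, \emph{proper} ($\{\ell\leqslant n\}=K^n$ is compact), so that $\ell(g)\to\infty$ exactly as $g\to\infty$. By a standard construction (Struble's theorem, together with properness for compactly generated groups) replace $\ell$ by a continuous, left-invariant, proper metric $d$ on $G$ with $d(\cdot,1)\asymp\ell$, put $w(x)=e^{d(x,1)}$, and let $\mu$ be left Haar measure, so that $\mu(gA)=\mu(A)$. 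Set $\ku H=L^2(G,\mu)$, which is separable since $G$ is second countable.

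For the linear part I would take $\pi\colon G\to\GL{\ku H}$ given by $\pi(g)\xi(x)=\frac{w(x)}{w(g^{-1}x)}\xi(g^{-1}x)$. This is a representation because $\pi=M_w\,\lambda\,M_w^{-1}$ for the left-regular representation $\lambda$ and the (unbounded) multiplication operator $M_w$; from $d(gy,1)\leqslant d(y,1)+d(g,1)$ one gets $\norm{\pi(g)}\leqslant e^{d(g,1)}$, so $\pi$ is a continuous representation in the sense of Section \ref{topologies} (strong continuity is checked on the dense set of bounded compactly supported functions, on which $M_w$ is locally bounded). From $d(gy,1)\geqslant d(g,1)-d(y,1)$ one gets, for every $\xi\in\ku H$,
$$
\norm{\pi(g)\xi}^2=\int_G\Big(\frac{w(gy)}{w(y)}\Big)^2|\xi(y)|^2\,d\mu(y)\ \geqslant\ e^{2d(g,1)}\int_G e^{-4d(y,1)}|\xi(y)|^2\,d\mu(y)=e^{2d(g,1)}D_\xi ,
$$
and $D_\xi>0$ whenever $\xi\neq 0$; hence $\norm{\pi(g)\xi}\geqslant\sqrt{D_\xi}\,e^{d(g,1)}\to\infty$ as $g\to\infty$.

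What remains — and is the heart of the matter — is the cocycle. Once a continuous cocycle $b\colon G\to\ku H$ for $\pi$ is produced with $\norm{b(g)}\to\infty$ and $\norm{b(g)}=o\big(e^{d(g,1)}\big)$, the affine representation $\rho=\pi\times b$ works: for $\xi\neq 0$, $\norm{\rho(g)\xi}\geqslant\norm{\pi(g)\xi}-\norm{b(g)}\to\infty$ by the estimate above, and for $\xi=0$, $\norm{\rho(g)0}=\norm{b(g)}\to\infty$; moreover $b$ must fail to be a coboundary, as otherwise the affine translate of $0$ by $-\eta$ would be a fixed point with bounded orbit. The natural candidates are the \emph{translation cocycles} $b(g)(x)=w(x)\big(\psi(x)-\psi(g^{-1}x)\big)$ attached to a bounded function $\psi\colon G\to\R$: a direct computation shows these are cocycles for $\pi$, that $b$ is a coboundary precisely when $w\cdot(\psi-c)\in\ku H$ for some constant $c$, and that $b(g)\in\ku H$ for each $g$ as soon as $\psi(x)-\psi(g^{-1}x)$ decays fast enough in $x$ against $w$ and the volume growth of $G$. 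Thus one wants $\psi$ with \emph{at least two distinct limiting values at infinity} — so that no $w\cdot(\psi-c)$ is square-integrable and $b$ is genuinely non-trivial — while keeping the "transition region" of $\psi$ thin enough that $b(g)$ stays in $\ku H$ with the right growth. When $G$ has more than one end this is straightforward (take $\psi$ the indicator of a halfspace). I expect the main obstacle to be the one-ended case — in particular higher-rank groups with property (T), which support \emph{no} affine isometric Hilbert action with unbounded orbits, so that the non-isometry of $\pi$ is genuinely needed — where the halfspace choice of $\psi$ fails to land in $\ku H$; there one has to exploit the coarse geometry furnished by compact generation, namely that $G$ acts properly and cocompactly by isometries on a proper geodesic space, to build $\psi$ (for instance from a horofunction) or else to replace the translation cocycles by a different construction. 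Granting such a $b$, one assembles $\rho=\pi\times b$, with separability of $\ku H$ automatic, and the theorem follows.
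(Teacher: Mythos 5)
Your linear part is sound, and it is in fact the paper's linear part in disguise: the paper works on the completion of $\bigoplus_n L_2(B_n,\lambda)$ for the inner product $\sum_n r^{2n}\int_{B_n}\xi\zeta\,d\lambda$ (with $B_n=V^n\setminus V^{n-1}$) carrying the plain left-regular representation, and this is unitarily conjugate, via multiplication by the weight, to your $M_w\,\lambda\,M_w^{-1}$ on $L^2(G,\mu)$. Your lower bound $\norm{\pi(g)\xi}\geqslant \sqrt{D_\xi}\,e^{d(g,1)}$ for $\xi\neq 0$ is correct and is a cleaner quantitative statement than anything the paper isolates. (Your worry about property (T) is indeed moot for the same reason you give: $\pi$ is not isometric.)

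The gap is the cocycle: you never construct it, and the route you sketch is blocked by an obstacle you impose on yourself. You insist that $b$ must not be a coboundary, and are then led to hunt for a bounded $\psi$ with two distinct limits at infinity such that $w\cdot\big(\psi-\psi(g\inv\cdot)\big)$ is square-integrable; since $w$ grows exponentially, this forces $\psi$ to stabilise exponentially fast along every direction, and you correctly concede that you cannot produce such a $\psi$ in general (the one-ended case). The paper's resolution is precisely the move you rule out: it takes $b(g)=\pi(g)\chi_V-\chi_V$, which is your translation cocycle with $\psi=\chi_V$ --- a genuine coboundary on all of $\ku H$, with unique fixed point $-\chi_V$ --- and then restricts the affine action to the closed $\pi$-invariant hyperplane $\ku H_0=\{\xi\in\ku H\del \int_G\xi\,d\lambda=0\}$ (the integral is a bounded $\pi$-invariant functional on the weighted space, and $b(g)$ has integral $\lambda(gV)-\lambda(V)=0$). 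Since $-\chi_V\notin\ku H_0$, no orbit in $\ku H_0$ is fixed, and the separable Hilbert space of the theorem is $\ku H_0$, not $\ku H$. Note also that your sufficient condition $\norm{b(g)}=o\big(e^{d(g,1)}\big)$ is neither achievable for such cocycles (here $\norm{b(g)}\asymp e^{d(g,1)}$) nor needed: writing $\rho(g)\xi=\pi(g)(\xi+\chi_V)-\chi_V$ and noting that $\xi+\chi_V\neq 0$ for every $\xi\in\ku H_0$ (its integral is $\lambda(V)>0$), your own estimate yields $\norm{\rho(g)\xi}\geqslant\sqrt{D_{\xi+\chi_V}}\,e^{d(g,1)}-\norm{\chi_V}\til\infty$. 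With that one correction your argument closes, and in fact more efficiently than the paper's two-case computation.
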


\begin{proof}Without loss of generality, $G$ is non-compact.
We fix a symmetric compact neighbourhood $V\subseteq G$ of $1$ generating $G$, i.e., such that 
$$
V\subseteq V^2\subseteq V^3\subseteq \ldots \subseteq G=\bigcup_{n\in \N}V^n.
$$
Moreover, by the non-compactness of $G$, we have $G\neq V^n$ for all $n$ and thus $B_n=V^n\setminus V^{n-1}\neq \tom$. Also, for any $m$ and $n$,
$$
B_n\cdot B_m\subseteq V^n\cdot V^m=V^{n+m}\subseteq B_1\cup \ldots\cup B_{n+m}.
$$
So, if $g\in B_n$, then for any $k$,
$$
gB_m\cap B_k\neq \tom \saa k\leqslant m+n.
$$
But, as also  $g\inv \in B_n$, we see that
$$
gB_m\cap B_k\neq\tom \saa B_m\cap g\inv B_k\neq \tom\saa m\leqslant k+n.
$$
In other words, for any $g\in B_n$,
$$
gB_m\cap B_k\neq \tom \saa m-n\leqslant k\leqslant m+n, 
$$
and thus, in fact, for any $m,n\geqslant 1$,
$$
B_n\cdot B_m\subseteq B_{m-n}\cup \ldots\cup B_{m+n}.
$$
Now, since $V^2$ is compact and ${\rm int} \;V\neq \tom$, there is a finite set $F\subseteq G$ such that $V^2\subseteq FV$, whence
$$
\lambda(B_n)\leqslant \lambda (V^n)\leqslant \lambda(F^{n-1}V)\leqslant |F|^{n-1}\lambda(V),
$$
where $\lambda$ is left Haar measure on $G$. Choosing $r>\max\{|F|,\lambda(V)\}$, we have $\lambda(B_n)\leqslant r^n$ for all $n\geqslant 1$.

Consider now the algebraic direct sum $\bigoplus_{n\geqslant 1}L_2(B_n,\lambda)$ equipped with the inner product
$$
\langle\xi\del\zeta\rangle=\sum_{n\geqslant 1}r^{2n}\int_{B_n}\xi\cdot \zeta\; d\lambda.
$$
So the completion $\ku H$ of $\bigoplus_{n\geqslant 1}L_2(B_n,\lambda)$ with respect to the corresponding norm $\norm\cdot$ consists of all Borel measurable functions $\xi\colon G\til \R$ satisfying 
$$
\norm{\xi}^2=\langle\xi\del \xi\rangle=\sum_{n\geqslant 1}r^{2n}\int_{B_n}\xi^2\; d\lambda<\infty
$$
and thus,  in particular, $\int_{B_n}\xi^2\;d\lambda\;\Lim{n}\; 0$.
Moreover, if we let $\zeta$ be defined by $\zeta\equiv \frac 1{r^{2n}}$ on $B_n$, then 
\[\begin{split}
\norm{\zeta}^2
&=\sum_{n\geqslant 1}r^{2n}\int_{B_n}\Big(\frac 1{r^{2n}}\Big)^2\; d\lambda   \\
&= \sum_{n\geqslant 1}r^{2n}\lambda(B_n)\frac 1{r^{4n}}   \\
&\leqslant \sum_{n\geqslant 1}\frac 1 {r^n}\\
&<\infty.
\end{split}\]
So $\zeta\in \ku H$.
Also, for any $\xi \in \ku H$, 
\[\begin{split}
\langle\xi\del \zeta\rangle
&=\sum_{n\geqslant 1}r^{2n}\int_{B_n}\xi\frac 1{r^{2n}}\; d\lambda   \\
&= \sum_{n\geqslant 1}\int_{B_n}\xi\; d\lambda  \\
&=\int_{G}\xi\; d\lambda.
\end{split}\]
Let $\ku H_0$ denote the orthogonal complement of $\zeta$ in $\ku H$, i.e., $\ku H_0=\{\xi\in \ku H\del \int_G\xi\; d\lambda=0\}$.

We now define $\pi\colon G\til \GL{\ku H}$ to be the left regular representation, i.e., $\pi(g)\xi=\xi(g\inv\,\cdot\,)$. To see that this is well-defined, that is, that each $\pi(g)$ is a bounded operator on $\ku H$, note that if $g\in B_m$ and $\xi\in \ku H$, then 
\[\begin{split}
\norm{\pi(g)\xi}^2
&=\sum_{n\geqslant 1}r^{2n}\int_{B_n}\xi(g\inv \,\cdot\,)^2\; d\lambda   \\
&=\sum_{n\geqslant 1}r^{2n}\int_{g\inv B_n}\xi^2\; d\lambda   \\
&=\sum_{n\geqslant 1}r^{2n}\sum_{k\geqslant 1}\int_{g\inv B_n\cap B_k}\xi^2\; d\lambda   \\
&=\sum_{n,k\geqslant 1}r^{2n}\int_{g\inv B_n\cap B_k}\xi^2\; d\lambda  \\
&\leqslant\sum_{n,k\geqslant 1}r^{2(m+k)}\int_{g\inv B_n\cap B_k}\xi^2\; d\lambda  \\
&=2^{2m}\sum_{n,k\geqslant 1}r^{2k}\int_{g\inv B_n\cap B_k}\xi^2\; d\lambda  \\
&=2^{2m}\sum_{k\geqslant 1}r^{2k}\int_{B_k}\xi^2\; d\lambda  \\
&=2^{2m}\norm{\xi}^2.
\end{split}\]
Thus, $\norm{\pi(g)}\leqslant 2^m$ for all $g\in B_m$, showing also that $\norm{\pi(g)}$ is uniformly bounded for $g$ in a neighbourhood of $1\in G$.

Finally, as 
$$
\langle\pi(g)\xi\del \zeta\rangle=\int_G\xi(g\inv\,\cdot\,)\;d\lambda=\int_G\xi\;d\lambda=\langle\xi\del \zeta\rangle,
$$
we see that $\ku H_0$ is $\pi(G)$-invariant.

Now define a $\pi$-cocycle $b\colon G\til \ku H_0$ by $b(g)=\pi(g)\chi_V-\chi_V$, where $\chi_V$ is the characteristic function of $V\subseteq G$, and let $\rho\colon G\til \Aff{\ku H}$ be the corresponding affine representation, $\rho(g)\xi=\pi(g)\xi+b(g)$. Thus,
$$
\rho(g)\xi-\xi=\xi(g\inv\,\cdot\,)+\chi_V(g\inv\,\cdot\,)-\xi-\chi_V
$$
for any $g\in G$ and $\xi\in \ku H$.

\begin{claim}For all $\xi\in \ku H_0$ and $K$, there is a compact set $C\subseteq G$ such that
$$
\norm{\rho(g)\xi-\xi}> K
$$
for all $g\in G\setminus C$.
\end{claim}

Assume first that $\xi\equiv-1 $ on $V$. Then, as $\xi\in \ku H_0$, we have $\int_G\xi d\lambda=0$ and thus there must be a Borel set $E\subseteq G$ such that $\lambda(E)>\delta>0$ and $\xi>\delta$ on $E$. Also, without loss of generality, we may assume that $E\subseteq B_n$ for some $n>1$. 
Since $\int_{B_m}\xi^2d\lambda\Lim{m}0$, we can find $M>n+2$ such that for all $m> M$, we have  $r^{2(m-n)}\frac{\delta^3}8>K$ and the set 
$$
F_m=\big\{h\in B_{m-n}\cup\ldots\cup B_{m+n}\del |\xi(h)|>\frac \delta2\big\}
$$
has measure $<\frac \delta2$.

Thus, if $g\in B_m$, $m>M$, we have $gE\subseteq B_{m-n}\cup\ldots\cup B_{m+n}$ and so $gE\cap V=\tom$, while also $E\cap V=\tom$. Therefore, 
\[\begin{split}
\norm{\rho(g)\xi-\xi}^2
&\geqslant r^{2(m-n)}\int_{B_{m-n}\cup \ldots\cup B_{m+n}}\big(\rho(g)\xi-\xi)^2d\lambda\\
&\geqslant r^{2(m-n)}\int_{gE\setminus F_m}\big(\xi(g\inv\,\cdot\,)+\chi_V(g\inv\,\cdot\,)-\xi-\chi_V\big)^2d\lambda\\
&\geqslant r^{2(m-n)}\int_{gE\setminus F_m}\big(\xi(g\inv\,\cdot\,)-\xi\big)^2d\lambda\\
&\geqslant r^{2(m-n)}\frac {\delta^3}8\\
&>K.
\end{split}\]
Setting $C=B_1\cup \ldots \cup B_M=V^M$, the claim follows.

Assume now instead that $\xi\not\equiv-1 $ on $V$ and  fix $A\subseteq V$  a Borel set of positive measure $\lambda(A)>\eps>0$ such that $|\xi+1|>\eps$ on $A$. As $\int_{B_n}\xi^2\;d\lambda\Lim n 0$, there is an $N>2$ such that, for all $m>N$, the set 
$$
D_m=\Big\{h\in B_{m-1}\cup B_m\cup B_{m+1}\Del |\xi(h)|>\frac {\eps} 2\Big\}
$$
has measure $<\frac \eps2$.

In particular, if $g\in B_m$ for some $m>N$, then $gA\subseteq B_mB_1\subseteq B_{m-1}\cup B_m\cup B_{m+1}$ and so for any $h\in gA\setminus D_m$, as $gA\cap V=\tom$,
$$
|\rho(g)\xi(h)-\xi(h)|=|\xi(g\inv h)+\chi_V(g\inv h)-\xi(h)-\chi_V(h)|=|\xi(g\inv h)+1-\xi(h)-0|\geqslant \frac \eps2.
$$
It follows that for such $g$,
\[\begin{split}
\norm{\rho(g)\xi-\xi}^2
&\geqslant r^{2m-2}\int_{B_{m-1}\cup B_m\cup B_{m+1}}\big(\rho(g)\xi-\xi)^2d\lambda\\
&\geqslant r^{2m-2}\int_{gA\setminus D_m}\big(\frac \eps2\big)^2d\lambda\\
&\geqslant r^{2m-2}\frac {\eps^3}8.
\end{split}\]
Choosing $M>N$ large enough such that $ r^{2M-2}\frac {\eps^3}8>K$, we see that for all $g\notin C=B_1\cup \ldots \cup B_M=V^M$, we have  $\norm{\rho(g)\xi-\xi}^2>K$, proving the claim and thus the theorem.
\end{proof}

Let us also mention that the preceding theorem holds for all locally compact second countable (i.e., Polish locally compact) groups. For, in order to extend the argument above to the non-compactly generated groups, it suffices to produces a covering $V_0\subseteq V_1\subseteq \ldots \subseteq G$ by compact subsets such that $V_n\cdot V_m\subseteq V_{n+m}$, which can be done, e.g., by Theorem 5.3 of \cite{haagerup}. However, we shall not need this extension as any such $G$ admits a fixed point free affine isometric action on Hilbert space.

We can now state the following equivalences for locally compact Polish groups.
\begin{thm}\label{loc comp equiv}
Let $G$ be a locally compact Polish group and $\ku H$ be a separable Hilbert space. Then the following are equivalent.
\begin{enumerate}
\item $G$ is compact,
\item $G$ has property (OB),
\item $G$ has property (ACR),
\item any continuous linear representation $\pi\colon G\til \GL{\ku H}$  is bounded,
\item any continuous affine representation $\pi\colon G\til \Aff{\ku H}$  fixes a point,
\item any continuous affine representation $\pi\colon G\til \Aff{\ku H}$  has  a bounded orbit.
\end{enumerate}
\end{thm}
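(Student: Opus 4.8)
The plan is to establish the cycle
\[
(1)\Rightarrow(2)\Rightarrow(3)\Rightarrow(5)\Rightarrow(6)\Rightarrow(1)
\]
and to splice in $(4)$ via $(2)\Rightarrow(4)\Rightarrow(1)$, so that all six conditions become equivalent. Most of the arrows are soft. For $(1)\Rightarrow(2)$: if $G$ is compact and acts continuously by isometries on a metric space $X$, then for each $x$ the orbit $Gx$ is the continuous image of the compact set $G$ under $g\mapsto gx$, hence compact and a fortiori bounded, so $G$ has property (OB). The implication $(2)\Rightarrow(3)$ is exactly Proposition \ref{OB then ACR}, and $(2)\Rightarrow(4)$ is the special case $X=\ku H$ of the equivalence $(1)\Leftrightarrow(3)$ of Theorem \ref{OB}. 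Since a separable Hilbert space is in particular separable and reflexive, $(3)\Rightarrow(5)$ is immediate from the definition of property (ACR), and $(5)\Rightarrow(6)$ holds because a fixed point is a bounded (singleton) orbit. Everything thus reduces to $(6)\Rightarrow(1)$ and $(4)\Rightarrow(1)$.

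Both of these are contrapositive reductions to Theorem \ref{locally compact affine}. Assume $G$ is not compact. Invoking Theorem \ref{locally compact affine} — and, in the non-compactly-generated case, either the extension of that theorem noted in the remark following it or the fact that such groups fail property (FH) and so admit a fixed-point-free affine isometric action on $\ku H$ — we obtain a continuous affine representation of $G$ on a separable Hilbert space $\ku H$ with linear part $\pi\colon G\til\GL{\ku H}$ and associated cocycle $b\colon G\til\ku H$ all of whose orbits are unbounded. In particular this representation has no bounded orbit, which contradicts $(6)$; and the orbit of $0$ being unbounded says precisely that $g\mapsto b(g)$ is an unbounded function.

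To contradict $(4)$ from the same data I would use the standard embedding $\Aff{\ku H}\inj\GL{\ku H\oplus\R}$ to convert this affine representation into an unbounded linear one: define $\tilde\pi\colon G\til\GL{\ku H\oplus\R}$ by $\tilde\pi(g)(\xi,t)=(\pi(g)\xi+t\,b(g),\,t)$. The cocycle identity $b(gh)=b(g)+\pi(g)b(h)$ makes $\tilde\pi$ a homomorphism; strong continuity of $\pi$ together with continuity of $b$ makes $\tilde\pi$ strongly continuous; and since $\norm{\pi(g)}$ is bounded near $1$ and $b(1)=0$, the estimate $\norm{\tilde\pi(g)}\leqslant\sqrt{(\norm{\pi(g)}+\norm{b(g)})^2+1}$ shows $\norm{\tilde\pi(g)}$ is bounded near $1$ as well. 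Hence $\tilde\pi$ is a continuous linear representation on the separable Hilbert space $\ku H\oplus\R\iso\ku H$. But $\norm{\tilde\pi(g)}\geqslant\norm{\tilde\pi(g)(0,1)}=\sqrt{\norm{b(g)}^2+1}$ is unbounded, contradicting $(4)$ and closing the cycle.

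The only genuine difficulty is thus already encapsulated in Theorem \ref{locally compact affine}; within the present argument the delicate points are merely bookkeeping — checking that $\tilde\pi$ meets the paper's definition of a continuous linear representation (strong continuity plus local norm-boundedness), and, in the non-compactly-generated case, making sure the appropriate form of Theorem \ref{locally compact affine} is available.
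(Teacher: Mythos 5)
Your proposal is correct, and its overall skeleton — reducing everything to $(4)\Rightarrow(1)$ and $(6)\Rightarrow(1)$, with the soft arrows handled exactly as you describe and $(6)\Rightarrow(1)$ split into the compactly generated case (Theorem \ref{locally compact affine}) and the non-compactly-generated case (union of a chain of proper open subgroups, hence an affine isometric action on Hilbert space with unbounded orbits) — coincides with the paper's. Where you genuinely diverge is $(4)\Rightarrow(1)$. The paper does not recycle the affine representation at all: it gives a second, self-contained construction of an unbounded continuous linear representation on $L_2(G)$, partitioning $G$ into relatively compact shells $B_k=(A_k)^{k!}\setminus(A_{k-1})^{(k-1)!}$ and defining a weighted left-regular representation $T_g=\bigoplus_{m,l}T_g^{m,l}$ with $T_g^{m,l}(f)=\exp(l-m)f(g\inv\,\cdot\,)$ on $L_2(B_m\cap g\inv B_l)$; this works uniformly for every $\sigma$-compact non-compact locally compact group, with no compactly-generated dichotomy. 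You instead linearise the affine representation via the standard embedding $\Aff{\ku H}\hookrightarrow\GL{\ku H\oplus\R}$, $\tilde\pi(g)(\xi,t)=(\pi(g)\xi+t\,b(g),t)$, and your verifications (homomorphism from the cocycle identity, strong continuity, local norm-boundedness from $b(1)=0$ and local boundedness of $\norm{\pi(g)}$, unboundedness from $\norm{\tilde\pi(g)}\geqslant\sqrt{\norm{b(g)}^2+1}$ with $b$ unbounded because the orbit of $0$ is) are all sound; in the non-compactly-generated case the BHV cocycle is likewise unbounded since for isometric actions one unbounded orbit forces all orbits unbounded. Your route is shorter and makes the logical dependence on Theorem \ref{locally compact affine} transparent, at the cost of a mild ambient-space bookkeeping step ($\ku H\oplus\R\iso\ku H$ for $\ku H$ infinite-dimensional); the paper's route buys a representation with an explicit and arguably more informative form, independent of the affine construction.
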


\begin{proof}
Clearly, (1) implies (2) and, by Proposition \ref{OB then ACR} (2) implies (3). Moreover, by Theorem \ref{OB}, (2) implies (4). Since (5) trivially implies (6), it suffices to show that (4) and (6) each imply (1). 

To see that (6) implies (1), note first that if $G$ is not compactly generated, then $G$ can be written as the union of an increasing chain of proper open subgroups, in which case it is well-known that $G$ admits a continuous affine isometric representation on a separable Hilbert space with unbounded orbits (see Corollary 2.4.2 \cite{BHV}). On the other hand, if $G$ is compactly generated, it suffices to apply Theorem \ref{locally compact affine}.

Finally, to see that (4) implies (1), assume that $G$ is locally compact, non-compact and let $\lambda$ be left-Haar measure on $G$. Since $G$ is $\sigma$-compact, we can find an exhaustive sequence
$$
A_0\subseteq A_1\subseteq A_2\subseteq \ldots\subseteq G
$$
of compact neighbourhoods of the identity. Set $B_k=(A_k)^{k!}\setminus (A_{k-1})^{(k-1)!}$, which is also relatively compact, and note that for any $g\in B_k$,
$$
gB_m\subseteq B_0\cup\ldots\cup B_{\max\{k,m\}+1},
$$
whence $B_m\cap g\inv B_l=\tom$ for all $l>\max\{k,m\}+1$. Moreover, since $A_0$ has non-empty interior and every $A_k$ is compact, it is easy to see that similarly $B_k$ has non-empty interior and thus $\lambda(B_k)>0$.

We now note that for any $g\in B_k$, the sets 
$$
\{B_m\cap g\inv B_l\}_{l,m}\qquad \text{and}\qquad \{gB_m\cap  B_l\}_{l,m}
$$
each form Borel partitions of $G$ and so $L_2(G)$ can be orthogonally decomposed as
$$
L_2(G)=\bigoplus_{l,m}L_2\big(B_m\cap g\inv B_l\big)=\bigoplus_{l,m}L_2\big(gB_m\cap B_l\big).
$$
Moreover, for every pair $m,l$, we can define an isomorphism 
$$
T_g^{m,l}\colon L_2\big(B_m\cap g\inv B_l\big)\til L_2\big(gB_m\cap B_l\big)
$$ 
by 
$$
T_g^{m,l}(f)=\exp(l-m)f(g\inv \;\cdot\;)
$$
and note that $\norm{T_g^{m,l}(f)}_2=\exp(l-m)\norm{f}_2$ for any $f\in L_2\big(B_m\cap g\inv B_l\big)$. Since $L_2\big(B_m\cap g\inv B_l\big)=\{0\}$, whenever $l>\max\{k,m\}+1$, it follows that the linear operator
$$
T_g=\bigoplus_{m,l}T_g^{m,l}\colon L_2(G)\til L_2(G)
$$
is well-defined, invertible and $\norm{T_g}\leqslant \exp({k+1})$.

So $g\mapsto T_g$ defines a continuous representation of $G$ in $\GL{L_2(G)}$. To see that it is unbounded, note that for any $m$ there are arbitrarily large $l$ such that $\lambda(B_m\cap g\inv B_l)>0$ for some $g\in G$ and so
$$
\Norm{T_g}\geqslant 
\frac{\norm{T_g(\chi_{B_m\cap g\inv B_l})}_2}
{\norm{\chi_{B_m\cap g\inv B_l}}_2}
=\exp(l-m).
$$
Since $\exp(l-m)\Lim{l\til \infty}\infty$, we see that the representation is unbounded.
\end{proof}

The following corollary is now immediate by Proposition \ref{locally compact quotient}.
\begin{cor}
Suppose $G$ is a Polish group and $W\leqslant G$ is an open subgroup of infinite index with $G={\rm Comm}_G(W)$. Then $G$ admits a continuous affine representation  on a separable Hilbert space for which every orbit is unbounded.
\end{cor}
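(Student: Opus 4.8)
The plan is to deduce the corollary directly from Proposition \ref{locally compact quotient} and Theorem \ref{loc comp equiv}, the point being that a non-compact locally compact quotient-like image of $G$ carries an affine action with unbounded orbits, which can then be pulled back along a continuous dense homomorphism. First I would invoke Proposition \ref{locally compact quotient}: the hypothesis that there is an open subgroup $W\leqslant G$ of infinite index with $G={\rm Comm}_G(W)$ is equivalent to the existence of an open subgroup $V\leqslant G$ of infinite index for which $H:=\overline{\pi_V(G)}\leqslant {\rm Sym}(G/V)$ is non-compact and locally compact. Since $V$ is open and $G$ is separable, the coset space $G/V$ is countable, and it is infinite because $V$ has infinite index; hence ${\rm Sym}(G/V)$ is (isomorphic to $S_\infty$ and so) a Polish group, and therefore its closed subgroup $H$ is a non-compact, locally compact \emph{Polish} group, so Theorem \ref{loc comp equiv} applies to it.

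Next I would apply Theorem \ref{loc comp equiv} to $H$: since $H$ is not compact, condition (6) of that theorem fails, so there is a continuous affine representation $\rho\colon H\til \Aff{\ku H}$ on a separable Hilbert space $\ku H$ with no bounded orbit, i.e. every $H$-orbit on $\ku H$ is unbounded. (Alternatively one may cite Theorem \ref{locally compact affine} together with the remark following it, which gives the stronger $\norm{h\cdot\xi}\til\infty$.) Now form the composition $\rho\circ\pi_V\colon G\til\Aff{\ku H}$. Because $V$ is open, $\pi_V$ is a continuous homomorphism, and the linear part and cocycle of $\rho$ are continuous on $H$, so their composites with $\pi_V$ are continuous on $G$; hence $\rho\circ\pi_V$ is a continuous affine representation of $G$ on $\ku H$ in the sense of Section \ref{topologies}.

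It remains to check that every orbit of this $G$-action is unbounded, which is the only step requiring any argument at all. Fix $\xi\in\ku H$; the $G$-orbit of $\xi$ is $\rho(\pi_V(G))\xi$. Since $\pi_V(G)$ is dense in $H$ and the map $h\mapsto\rho(h)\xi$ is continuous, every $\rho(h)\xi$ with $h\in H$ is a limit of points $\rho(\pi_V(g_i))\xi$, so the norm-closure of the $G$-orbit of $\xi$ contains the entire $H$-orbit $\rho(H)\xi$, which is unbounded; therefore the $G$-orbit of $\xi$ is itself unbounded. I do not expect a genuine obstacle here: all the substance lives in Proposition \ref{locally compact quotient} and Theorem \ref{loc comp equiv}, and the corollary is essentially bookkeeping — the two things to be careful about being that $H$ is Polish (so that Theorem \ref{loc comp equiv} is legitimately applicable) and that unboundedness of orbits transfers along the dense-image homomorphism $\pi_V$.
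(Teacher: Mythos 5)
Your proof is correct and follows exactly the route the paper intends: the paper simply declares the corollary ``immediate by Proposition \ref{locally compact quotient}'' (in combination with Theorem \ref{loc comp equiv}), and your write-up supplies precisely the omitted bookkeeping — that $H=\overline{\pi_V(G)}$ is a non-compact locally compact Polish group, that the composite $\rho\circ\pi_V$ is a continuous affine representation, and that unboundedness of every orbit transfers back along the dense-image homomorphism $\pi_V$.
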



\section{Local boundedness properties}
Having studied the preceding global boundedness properties for Polish groups, it is natural to consider their {\em local} counterparts, where by this we understand the existence of a neighbourhood $U\subseteq G$ of the identity satisfying similar covering properties to those listed in Figure 1.

\subsection{A question of Solecki}As mentioned earlier, in \cite{actions} and \cite{uspenskii}, S. Solecki and V. Uspenski\u\i{} independently showed that a Polish group $G$ is compact if and only if  for any open set $V\ni 1$ there is a  finite set $F\subseteq G$ with $G=FVF$. The similar characterisation of compactness with only one-sided translates $FV$ on the other hand is fairly straightforward.

A similar characterisation of locally compact Polish groups is also possible, namely, a Polish group $G$ is locally compact if and only if there is an open set $U\ni1$ such that for any open $V\ni 1$ there is a finite set $F\subseteq G$ such that $U\subseteq FV$.

The corresponding property for two-sided translates leads to the following definition.

\begin{defi}
A topological group $G$ is {\em  feebly locally compact} if there is a neighbourhood $U\ni1$ such that for any open $V\ni1 $ there is a finite set $F\subseteq G$ satisfying $U\subseteq FVF$.
\end{defi}

Solecki \cite{haar null} originally considered groups in the complement of this class and termed these {\em strongly non-locally compact} groups. In connection with Solecki's \cite{haar null} study of left-Haar null sets in Polish groups  \cite{haar null}, the class of strongly non-locally compact Polish groups turned out to be of special significance when coupled with the following concept.

\begin{defi}[S. Solecki \cite{haar null}]
A Polish group $G$ is said to have a {\em free subgroup at $1$} if there is a sequence $g_n\in G$ converging to $1$ which is the basis of a free non-Abelian group and such that any finitely generated subgroup $\langle g_1,\ldots,g_n\rangle$ is discrete.
\end{defi}

Solecki asked whether any (necessarily non-locally compact) Polish group having a free subgroup at $1$ can be feebly locally compact (Question 5.3 in \cite{haar null}).  We shall now present a fairly general construction of Polish groups that are feebly locally compact, but nevertheless fail to be locally compact. Depending on the specific inputs, this construction also provides an example with a free subgroup at $1$ and hence an answer to Solecki's question.

\subsection{Construction}
Fix a countable  group $\Gamma$ and let 
$$
H_\Gamma=\{g\in \Gamma^\Z\del \e m\;\a n\geqslant m\; g(n)=1\},
$$
which is a subgroup of the full direct product $\Gamma^\Z$. Though $H_\Gamma$ is not closed in the product topology for $\Gamma$ discrete, we can equip $H_\Gamma$ with a complete $2$-sided invariant ultrametric $d$ by the following definition.
$$
d(g,f)=2^{\max \big(k\del g(k)\neq f(k)\big)}.
$$
By the definition of $H_\Gamma$, this is well-defined and it is trivial to see that the ultrametric inequality
$$
d(g,f)\leqslant \max\big\{d(g,h),d(h,f)\big\}
$$
is verified. Also, since
$$
\max \big(k\;|\: h(k)g(k)\neq h(k)f(k)\big)=\max \big(k\;|\: g(k)\neq f(k)\big)=\max \big(k\;|\: g(k)h(k)\neq f(k)h(k)\big),
$$
we see that the metric is $2$-sided invariant and hence induces a group topology on $H_\Gamma$. Moreover, the countable set 
$$
\big\{g\in H_\Gamma\del \{k\del g(k)\neq 1\}\text{ is finite }\big\}
$$
is dense in $H_\Gamma$, so $H_\Gamma$ is separable and is easily seen to be complete, whence $H_\Gamma$ is a Polish group. To avoid confusion with the identity in $\Gamma$, denote by $e$ the identity in $H_\Gamma$, i.e., $e(n)=1$ for all $n\in \Z$.

We now let $\Z$ act by automorphisms on $H_\Gamma$ via bilateral shifts of sequences, that is, for any $k\in \Z$ and $g\in H_\Gamma$, we let
$$
\big(k* g\big)(n)=g(n-k)
$$
for any $n\in \Z$. In particular, for any $g,f\in H_\Gamma$ and $k\in \Z$, we have
$$
d(k* g,k* f)=2^kd(g,f),
$$
i.e., $k*B_d(e,2^m)=B_d(e,2^{m+k})$, 
which shows that $\Z$ acts continuously on $H_\Gamma$. We can therefore form the topological semidirect product
$$
\Z\ltimes H_\Gamma,
$$
which is just $\Z\times H_\Gamma$ equipped with the product topology and the group operation  defined by 
$$
(n,g)\cdot(m,f)=(n+m,g(n*f)).
$$ 
So $\Z\ltimes H_\Gamma$ is a Polish group. Also, a neighbourhood basis at the identity is given by the clopen subgroups
$$
V_m=\big\{(0,g)\in \Z\ltimes H_\Gamma\del \a i\geqslant m\; g(i)=1\big\}=\{0\}\times B_d(e, 2^{m}),
$$
which implies that $\Z\ltimes H_\Gamma$ is isomorphic to a closed subgroup of the infinite symmetric group $S_\infty$. Note also that
$$
\ldots\subseteq V_{-1}\subseteq V_0\subseteq V_1\subseteq \ldots.
$$
Then one can easily verify that for any $k\in \Z$ and $g\in H_\Gamma$
$$
(k,e)\cdot(0,g)\cdot(k,e)\inv=(0,k*g)
$$
and hence
$$
(k,e)\cdot V_m\cdot (k,e)\inv =  (k,e)\cdot\big(\{0\}\times B_d(e, 2^{m})\big)\cdot(k,e)\inv  =\{0\}\times   B_d(e, 2^{m+k})      =           V_{m+k}
$$
for any $k,m\in \Z$.

We claim that $\Z\ltimes H_\Gamma$ is Weil complete. To see this, suppose that $f_n\in \Z\ltimes H_\Gamma$ is left-Cauchy, i.e., that $f_n\inv f_m\Lim{n,m\til \infty}1$. Writing $f_n=(k_n,g_n)$ for $k_n\in \Z$ and $g_n\in H_\Gamma$, we have $f_n\inv=\big(-k_n,(-k_n)*g_n\inv\big)$ and so
\begin{align*}
f_n\inv f_m=&\big(-k_n,(-k_n)*g_n\inv\big)\big(k_m,g_m\big)\\
=&\big (k_m-k_n, ((-k_n)*g_n\inv)((-k_n)*g_m)\big)\\
=&\big (k_m-k_n, (-k_n)*(g_n\inv g_m)\big).
\end{align*}
Since  $f_n\inv f_m\Lim{n,m\til \infty}1$, the sequence $k_n\in \Z$ is eventually constant, say $k_n=k$ for $n\geqslant N$, and so for all $n,m\geqslant N$, 
\begin{align*}
f_n\inv f_m=\big (0, (-k)*(g_n\inv g_m)\big).
\end{align*}
Since $\Z$ acts continuously on $H_\Gamma$ it follows that $(-k)*(g_n\inv g_m)\Lim{n,m\til \infty}e$ if and only if $g_n\inv g_m\Lim{n,m\til \infty}e$, i.e., if and only if $(g_n)$ is left-Cauchy in $H_\Gamma$. Since $H_\Gamma$ has a complete $2$-sided invariant metric it follows that $(g_n)$ converges to some $g\in H_\Gamma$ and so $(f_n)$ converges in $\Z\ltimes H_\Gamma$ to $(k,g)$, showing that $\Z\ltimes H_\Gamma$ is Weil complete.

Denoting by $\F_\infty$ the free non-Abelian group on infinitely many letters $a_1,a_2,\ldots$, the following provides an easy answer to Solecki's question mentioned above.

\begin{thm}
The group $\Z\ltimes H_{\F_\infty}$ is a non-locally compact, Weil complete Polish group, having a free subgroup at $1$. Also, $\Z\ltimes H_{\F_\infty}$ is isomorphic to a closed subgroup of $S_\infty$ and there is an open subgroup $U\leqslant \Z\ltimes H_{\F_\infty}$ whose conjugates $fUf\inv$ provide a neighbourhood basis at $1$. In particular, $\Z\ltimes H_{\F_\infty}$  feebly locally compact.
\end{thm}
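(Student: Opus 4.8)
The plan: most of the assertions have already been checked, for an arbitrary countable group $\Gamma$ in place of $\F_\infty$, in the discussion preceding the statement --- namely that $\Z\ltimes H_\Gamma$ is a Polish group, that it is Weil complete, and that, since the clopen subgroups $V_m$ form a neighbourhood basis at $1$, it is non-Archimedean and hence isomorphic to a closed subgroup of $S_\infty$. What remains, now specifically for $\Gamma=\F_\infty$, is: (i) non-local-compactness, (ii) the existence of an open subgroup $U$ with $\{fUf\inv\}$ a neighbourhood basis at $1$, (iii) the existence of a free subgroup at $1$, and (iv) feeble local compactness. Of these, (ii) and (iv) will be immediate and (i) is short, so the substance of the proof is (iii).

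For (ii), take $U=V_0$. The identity $(k,e)V_m(k,e)\inv=V_{m+k}$ recorded above shows that the conjugacy class of $V_0$ contains all the $V_k$, $k\in\Z$, in particular all $V_m$ with $m\leqslant0$; as the latter already form a neighbourhood basis at $1$ and every conjugate $fV_0f\inv$ is an open subgroup, hence a neighbourhood of $1$, the family $\{fV_0f\inv\del f\in\Z\ltimes H_{\F_\infty}\}$ is a neighbourhood basis at $1$. For (iv): given open $V\ni1$, choose $f$ with $fV_0f\inv\subseteq V$; then $V_0\subseteq f\inv Vf\subseteq FVF$ with $F=\{f,f\inv\}$ finite, so $V_0$ witnesses feeble local compactness. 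For (i): evaluation at the $(m-1)$-st coordinate is a continuous surjective homomorphism $V_m\til V_m/V_{m-1}\iso\F_\infty$ onto an infinite discrete group; were $\Z\ltimes H_{\F_\infty}$ locally compact it would, being non-Archimedean, contain a compact open subgroup $K$, whence some $V_m\subseteq K$ would be a closed subgroup of $K$ and so compact, contradicting that it maps continuously onto the non-compact $\F_\infty$.

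For (iii) I would argue as follows. Re-index a free basis of $\F_\infty$ as a family $(x_{n,k})$ with $(n,k)$ ranging over the countable set of pairs of integers satisfying $n\geqslant1$ and $k\leqslant-n$, and define $g_n\in H_{\F_\infty}$ by $g_n(k)=x_{n,k}$ for $k\leqslant-n$ and $g_n(k)=1$ otherwise; then $d(g_n,e)=2^{-n}\til0$, so $(0,g_n)\til1$ in $\Z\ltimes H_{\F_\infty}$. The one thing to verify is that if $w=g_{n_1}^{\eps_1}\cdots g_{n_l}^{\eps_l}$ is a nonempty reduced word in the letters $g_n$, then at any coordinate $k\leqslant-\max_in_i$ every syllable of $w$ is nontrivial, so $w(k)=x_{n_1,k}^{\eps_1}\cdots x_{n_l,k}^{\eps_l}$, and this is reduced in the free basis of $\F_\infty$ precisely because $w$ was reduced (a cancellation of consecutive syllables here forces $n_i=n_{i+1}$ and $\eps_i=-\eps_{i+1}$), whence $w(k)\neq1$. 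This shows at once that $a_n\mapsto g_n$ defines an isomorphism of $\F_\infty$ onto $\langle g_n\del n\geqslant1\rangle$, so the $g_n$ form a free basis; and that any nontrivial $w\in\langle g_{m_1},\ldots,g_{m_r}\rangle$ has $w(k)\neq1$ for all $k\leqslant-\max_im_i$, hence $d(w,e)\geqslant2^{-\max_im_i}$, so that finitely generated subgroup is discrete. Since $\{0\}\times H_{\F_\infty}$ is an open subgroup of $\Z\ltimes H_{\F_\infty}$ (it contains $V_0$), both properties pass to the subgroup generated by the $(0,g_n)$, which is therefore a free subgroup at $1$.

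The only genuine obstacle is the verification in (iii), and within it the small combinatorial point that a word in the alphabet $\{g_n\}$ is reduced if and only if the induced word in the $x_{n,k}$ is reduced at each sufficiently negative coordinate $k$ --- once this is established, both the freeness of $\langle g_n\del n\geqslant1\rangle$ and the discreteness of its finitely generated subgroups drop out of the single computation of $w(k)$.
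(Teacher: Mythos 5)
Your proof is correct and follows essentially the same route as the paper's; the only real difference is in the free subgroup at $1$, where you define $g_n$ using distinct free generators $x_{n,k}$ at each coordinate $k\leqslant -n$, whereas the paper sets $g_n(k)=a_n$ constantly for $k\leqslant -n$ and packages the reduced-word argument as the observation that the coordinate evaluation $\pi_{-n}$ is a homomorphism carrying $\beta_1,\ldots,\beta_n$ to the free basis $a_1,\ldots,a_n$. Both versions rest on the same point: evaluating a reduced word at a sufficiently negative coordinate yields a reduced, hence nontrivial, word in $\F_\infty$, which gives freeness and discreteness of the finitely generated subgroups simultaneously.
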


\begin{proof}
To see that $\Z\ltimes H_{\F_\infty}$ is not locally compact, we define for every $m\in \Z$ a continuous homomorphism
$$
\pi_m\colon\Z\ltimes H_{\F_\infty}\til \F_\infty
$$
by $\pi_m(k,g)=g(m)$. Keeping the notation from before, $V_m=\{0\}\times B_d(e,2^m)$, we see that for any $m\in \Z$, $\pi_{m-1}\colon V_{m}\til \F_\infty$ is surjective. So no $V_{m}$ is compact and hence $\Z\ltimes H_{\F_\infty}$ cannot be locally compact. Now, to see that $\Z\ltimes H_{\F_\infty}$ has a free subgroup at $1$, define $g_n\in H_\Gamma$ by
$$
g_n(k)=\begin{cases}
a_n, &\text{if } k\leqslant -n;\\
1, &\text{if }k>-n.
\end{cases}
$$
Then $(0,g_n)\Lim{n\til \infty}(0,e)$ in $\Z\ltimes H_{\F_\infty}$, so if we let $\beta_n=(0,g_n)$, we see that  $\langle \beta_1,\beta_2,\beta_3,\ldots\rangle$ is a non-discrete subgroup of $\Z\ltimes H_{\F_\infty}$. To see that $\{\beta_1,\beta_2,\beta_3,\ldots\}$ is a free basis for $\langle \beta_1,\beta_2,\beta_3,\ldots\rangle$, it suffices to see that for every $n$, $\langle \beta_1,\beta_2,\ldots, \beta_n\rangle$ is freely generated by $\{\beta_1,\beta_2,\ldots, \beta_n\}$. But this follows easily from the fact that $\pi_{-n}(\beta_i)=a_i$
for any $i\leqslant n$ and that $\pi_{-n}$ is a homomorphism into $\F_\infty$. This argument also shows that $\langle\beta_1,\beta_2,\ldots,\beta_n\rangle$ is discrete. So $\Z\ltimes H_{\F_\infty}$ has a free subgroup at $1$.

That $\Z\ltimes H_{\F_\infty}$ is isomorphic to a closed subgroup of $S_\infty$ has already been proved and, moreover, we know that for any $m$, $(m,e)\cdot V_{-m}\cdot (m,e)=V_0$. So for the last statement it suffices to take $U=V_0$.
\end{proof}



\section{Microscopic structure}\label{microscopic}

The negative answer to Solecki's question of whether feebly locally  compact Polish groups are necessarily locally compact indicates that there is a significant variety of behaviour in Polish groups with respect to coverings by translates of open sets. Also, as indicated by Malicki's result \cite{malicki} that no oligomorphic closed subgroup of $S_\infty$ is  feebly locally compact,  the macroscopic or large scale structure of Polish groups has counterparts at the local level. We shall now develop this even further by bringing it to the microscopic level of Polish groups as witnessed by neighbourhood bases at $1$. 

More precisely, we shall study the conditions under which any neighbourhood basis at $1$
$$
V_0\supseteq V_1\supseteq V_2\supseteq \ldots \ni 1
$$
will cover $G$ via, e.g.,  sequences of two-sided translates $G=\bigcup_{n\in \N}f_nV_ng_n$ by single elements $f_n,g_n\in G$ or two-sided translates $G=\bigcup_{n\in \N}F_nGE_n$ by  finite subsets $F_n, E_n\subseteq G$. As opposed to this, neighbourhood bases $V_0\supseteq V_1\supseteq V_2\supseteq \ldots \ni 1$ failing these covering properties can be thought of a being {\em narrow} in the group $G$.

As is fairly easy to see (see Proposition \ref{loc comp} below), some of these covering properties are generalisations of local compactness, but (which is less trivial)  must fail in non-compact Roelcke precompact Polish groups. Moreover, as will be shown in Section \ref{isometric actions}, the existence of narrow sequences $(V_n)$ allows for the construction of isometric actions on various spaces with interesting local dynamics.


\subsection{Narrow sequences and completeness}
To commence our study, let us first note how some of the relevant covering properties play out in the context of locally compact groups.

\begin{prop}\label{loc comp}
\begin{itemize}
\item[(a)] Suppose $G$ is a non-discrete, locally compact Polish group. Then there are open sets
$$
V_0\supseteq V_1\supseteq \ldots\ni 1
$$
such that for any $f_n\in G$, $G\neq \bigcup_n f_nV_n$.

\item[(b)] Suppose $G$ is a non-discrete, unimodular, locally compact Polish group. Then there are open sets
$$
V_0\supseteq V_1\supseteq \ldots\ni 1
$$
such that for any $f_n, g_n\in G$, $G\neq \bigcup_n f_nV_ng_n$.

\item[(c)] Suppose $G$ is a non-discrete, locally compact Polish group. Then for any open sets
$$
V_0\supseteq V_1\supseteq \ldots\ni 1
$$
there are finite sets $F_n\subseteq  G$ such that $G=\bigcup_n F_nV_n$.

\item [(d)] Let $\Gamma$ be a non-trivial finite group. Then $\Z\ltimes H_\Gamma$ is a non-discrete, locally compact Polish group having a compact open subgroup $U$ such that for any open set $V\ni 1$ there is $f\in \Z\ltimes H_\Gamma$ with $U\subseteq fVf\inv$. In particular, whenever 
$$
V_0\supseteq V_1\supseteq \ldots \ni1
$$ 
are open sets there are $f_n,g_n\in \Z\ltimes H_\Gamma$ such that $\Z\ltimes H_\Gamma=\bigcup_nf_nV_ng_n$.

\end{itemize}
\end{prop}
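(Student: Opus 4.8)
The plan is to handle (a)--(c) by soft general arguments and to save the real work for (d). For (a), I would take a left Haar measure $\lambda$ on $G$; since $G$ is locally compact and non-discrete, $\lambda$ is finite on some compact neighbourhood of $1$ and $\lambda(\{1\})=0$, so one can pick open sets $V_0\supseteq V_1\supseteq\cdots\ni 1$ (intersecting each with the earlier ones if needed) with $\lambda(V_n)$ as small as we like, arranged so that $\sum_n\lambda(V_n)<\lambda(G)$ --- any summable choice works when $\lambda(G)=\infty$, and $\lambda(V_n)<2^{-n-1}\lambda(G)$ when $G$ is compact. By left-invariance $\lambda\big(\bigcup_n f_nV_n\big)\leqslant\sum_n\lambda(V_n)<\lambda(G)$ for every sequence $(f_n)$ in $G$, so $G\neq\bigcup_n f_nV_n$. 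If in addition $G$ is unimodular, then $\lambda$ is also right-invariant, so $\lambda(f_nV_ng_n)=\lambda(V_n)$ and the identical estimate gives (b).

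For (c), I would use that a locally compact Polish group is $\sigma$-compact, $G=\bigcup_nK_n$ with each $K_n$ compact. Given open sets $V_0\supseteq V_1\supseteq\cdots\ni 1$, the family $\{gV_n : g\in K_n\}$ is an open cover of $K_n$, so finitely many suffice: $K_n\subseteq F_nV_n$ for a finite set $F_n\subseteq K_n$, and then $G=\bigcup_nK_n\subseteq\bigcup_nF_nV_n$.

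For (d) I would first verify the stated structure. Recall the clopen subgroups $V_m=\{0\}\times B_d(e,2^m)$ from the construction, where $B_d(e,2^m)=\{g\in H_\Gamma : g(i)=1 \text{ for all } i\geqslant m\}$. Being a closed subset of the complete space $H_\Gamma$, $B_d(e,2^m)$ is complete, and it is totally bounded because, for $j\leqslant m$, the balls of radius $2^j$ that it contains are determined by the finitely many coordinates in $[j,m)$, of which (as $\Gamma$ is finite) there are only $|\Gamma|^{m-j}$; hence $B_d(e,2^m)$ is compact. So $U:=V_0$ is a compact open subgroup, which makes $\Z\ltimes H_\Gamma$ locally compact; it is non-discrete because already $H_\Gamma$ is, and Polish by the construction. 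Now let $V\ni 1$ be open; since the $V_m$ form a neighbourhood basis at $1$, there is $m_0$ with $V_{m_0}\subseteq V$, and the conjugation identity $(k,e)\,V_m\,(k,e)\inv=V_{m+k}$ established in the construction gives, with $f=(-m_0,e)$, that $U=V_0=f\,V_{m_0}\,f\inv\subseteq fVf\inv$.

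For the final assertion of (d): write $\Z\ltimes H_\Gamma=\bigcup_{k,m\in\Z}(k,e)V_m$, a countable union of compact sets, since every $g\in H_\Gamma$ lies in some $B_d(e,2^m)$; enumerate these sets as $C_j=(k_j,e)V_{m_j}$, $j\in\N$. Given the decreasing open sets $W_0\supseteq W_1\supseteq\cdots\ni 1$ (the $V_n$ of the statement, relabelled to avoid a clash), choose for each $j$ an integer $p_j$ with $V_{p_j}\subseteq W_j$ and then an integer $q_j\geqslant m_j-p_j$, so that
\[
V_{m_j}\subseteq V_{p_j+q_j}=(q_j,e)\,V_{p_j}\,(q_j,e)\inv\subseteq (q_j,e)\,W_j\,(q_j,e)\inv .
\]
Then $C_j=(k_j,e)V_{m_j}\subseteq(k_j+q_j,e)\,W_j\,(-q_j,e)$, so putting $f_j=(k_j+q_j,e)$ and $g_j=(-q_j,e)$ we obtain $\Z\ltimes H_\Gamma=\bigcup_jC_j\subseteq\bigcup_jf_jW_jg_j$, hence equality. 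The only step that is not pure bookkeeping is the observation driving this last computation: conjugation by the shift $(q,e)$ dilates a fixed neighbourhood of $1$ enough to swallow any prescribed $V_m$. Once that is in hand, matching the countably many compact cosets $C_j$ one-to-one with the $W_j$ completes the argument --- and, incidentally, the hypothesis that the $W_j$ decrease is not even needed. This dilation phenomenon, rather than any measure-theoretic obstruction, is what I would flag as the crux of (d).
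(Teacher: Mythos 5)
Your proposal is correct and follows essentially the same route as the paper: Haar-measure estimates for (a) and (b), covering a compact neighbourhood (equivalently, the $\sigma$-compact decomposition) by finitely many translates for (c), and for (d) the compact open subgroup $V_0$ together with the shift-conjugation identity $(k,e)V_m(k,e)\inv=V_{m+k}$ to dilate any prescribed neighbourhood. Your explicit handling of the case $\lambda(G)=\infty$ in (a) is a small point of extra care over the paper's stated bound, but the argument is the same.
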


\begin{proof}
(a) Let $\lambda$ be left Haar measure on $G$ and choose $V_n\ni 1$ open such that
$\lambda(V_n)<\lambda(G)/2^{n+2}$. Then for any $f_n\in G$,
$$
\lambda(\bigcup_{n=0}^\infty f_nV_n)\leqslant \sum_{n=0}^\infty \lambda(f_nV_n)=\sum_{n=0}^\infty \lambda(V_n)<\lambda(G),
$$
so $G\neq \bigcup_nf_nV_n$.

(b) This is proved in the same manner as (a) using that Haar measure is $2$-sided invariant.

(c) Let $U\subseteq G$ be any compact neighbourhood of $1$. Then any open $V\ni 1$ covers $U$ by left-translates and hence by a finite number of left translates. So if open $V_0\supseteq V_1\supseteq\ldots\ni 1$ are given, find finite $E_n\subseteq G$ such that $U\subseteq E_nV_n$. Then if $\{h_n\}_{n\in \N}$ is a dense sequence in $G$, we have $G=\bigcup_{n}h_nU=\bigcup_{n}h_nE_nV_n$. Setting $F_n=h_nE_n$ we have the desired conclusion.

(d) One easily sees from the construction of $H_\Gamma$ that $B_d(e,1)$ is compact and thus $U=\{0\}\times B_d(e,1)$ is a compact neighbourhood of $1$ in $\Z\ltimes H_\Gamma$. So $\Z\ltimes H_\Gamma$ is locally compact. Now, if $V_0\supseteq V_1\supseteq\ldots \ni 1$ are open, there are $f_n\in \Z\ltimes H_\Gamma$ such that $U\subseteq f_nV_nf_n\inv$. So if $\{h_n\}_{n\in \N}$ is a dense subset of $\Z\ltimes H_\Gamma$, then 
$\Z\ltimes H_\Gamma=\bigcup_nh_nf_nV_nf_n\inv$.
\end{proof}

Our first task is to generalise Proposition \ref{loc comp} (a) to all non-discrete Polish groups.

\begin{prop}\label{left-translates}
Suppose $G$ is a non-discrete Polish group. Then there are open sets 
$$
V_0\supseteq V_1\supseteq \ldots \ni 1
$$
such that for any $g_n\in G$, $G\neq \bigcup_n g_n V_n$.
\end{prop}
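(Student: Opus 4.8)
The plan is to emulate the measure–theoretic argument of Proposition~\ref{loc comp}~(a) in the absence of a Haar measure, by constructing inside $G$ a Cantor set so ``spread out'' that each single left translate $gV_n$ can meet it in at most one small clopen piece; since the pieces are made to shrink fast enough, they cannot cover it. Fix a compatible left–invariant metric $d_l$ on $G$ (Section~\ref{uniformities}), recall that $d_{ts}=d_l+d_r$ is then a compatible \emph{complete} metric, and write $B(h,\eps)=\{g\del d_l(h,g)<\eps\}$. By left–invariance, $hB(1,\eps)=B(h,\eps)$, $B(1,\eps)\inv=B(1,\eps)$, $B(1,\eps)^m\subseteq B(1,m\eps)$, and $\ov{B(1,\eps)}\subseteq B(1,2\eps)$, so finite products of such balls are controlled entirely by their radii. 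The sole role of non-discreteness of $G$ will be that every nonempty open subset of $G$ is infinite, i.e.\ $G$ has no isolated point.

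I would build, by a single recursion, strictly decreasing radii $\eps_0>\eps_1>\eps_2>\cdots$ with $\eps_\ell<2^{-\ell}$ (so the symmetric open sets $B_\ell:=B(1,\eps_\ell)$ form a neighbourhood basis at $1$), together with a dyadic tree $(h_s)_{s\in 2^{<\N}}$ of elements of $G$ with $h_\tom=1$, such that, putting $N_s:=h_sB_{|s|}$, for every $\ell$ and all distinct $s,t\in 2^\ell$:
\begin{itemize}
\item[(i)] $\ov{N_{s'}}\subseteq N_s$ whenever $s\emb s'$ and $s\neq s'$, so that $\ov{N_{x\begr\ell}}$ decreases in $\ell$ along every branch $x\in 2^\N$;
\item[(ii)] the fattened sets $h_s\ov{B_\ell}^2$, $s\in 2^\ell$, are pairwise disjoint (in particular so are the $\ov{N_s}$, $s\in 2^\ell$);
\item[(iii)] ${\rm diam}_{d_{ts}}(N_s)<2^{-\ell}$.
\end{itemize}
At the recursion step one sets $h_{s0}:=h_s$ and picks $h_{s1}$ to be some point of $B(h_s,\eps_\ell/4)$ other than $h_s$ — which exists because $G$ has no isolated point — and then takes $\eps_{\ell+1}$ small enough. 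Indeed, by the ball arithmetic above, clauses~(i) and~(ii) at level $\ell+1$ follow from the inequalities $8\eps_{\ell+1}<\min_{s\in 2^\ell}d_l(h_s,h_{s1})$ and $16\eps_{\ell+1}<\eps_\ell$, each of which merely asks $\eps_{\ell+1}$ to be small; and for~(iii) at level $\ell+1$ it suffices to shrink $\eps_{\ell+1}$ still further, using that for each of the finitely many fixed elements $h=h_{s0},h_{s1}$ one has ${\rm diam}_{d_{ts}}\big(hB(1,\eps)\big)\to 0$ as $\eps\to 0$, by continuity of left translation by $h$.

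Granting the construction, completeness of $d_{ts}$ together with (i) and (iii) makes $\bigcap_\ell\ov{N_{x\begr\ell}}$ a single point $c_x$ for every $x\in 2^\N$, and by (ii) the assignment $x\mapsto c_x$ is a topological embedding of $2^\N$; let $C\subseteq G$ be its image, so $C\cap\ov{N_s}=\{c_x\del s\emb x\}$ for each $s$ (any $c_x\in\ov{N_s}$ also lies in $\ov{N_{x\begr|s|}}$, which by (ii) is disjoint from $\ov{N_s}$ unless $x\begr|s|=s$). Set $V_n:=B_{n+2}$; these are open with $V_0\supseteq V_1\supseteq\cdots\ni 1$. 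Fix any sequence $(g_n)$ in $G$. The key point is that, for each $n$, the translate $g_nV_n=g_nB_{n+2}$ meets $\ov{N_s}$ for at most one $s\in 2^{n+2}$: if $g_nB_{n+2}$ met both $\ov{N_s}=h_s\ov{B_{n+2}}$ and $\ov{N_t}=h_t\ov{B_{n+2}}$, then $h_s\inv g_n$ and $h_t\inv g_n$ would lie in $\ov{B_{n+2}}\cdot B_{n+2}\subseteq\ov{B_{n+2}}^2$, hence $h_s\inv h_t\in\ov{B_{n+2}}^4$, i.e.\ $h_s\ov{B_{n+2}}^2\cap h_t\ov{B_{n+2}}^2\neq\tom$, forcing $s=t$ by (ii). Thus there is $s_n\in 2^{n+2}$ with $\{x\in 2^\N\del c_x\in g_nV_n\}\subseteq\{x\del s_n\emb x\}$, a set of measure $2^{-(n+2)}$ for the usual $(1/2,1/2)$-product probability measure $\mu$ on $2^\N$. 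Since $\sum_{n\ge 0}2^{-(n+2)}=\tfrac12<1$, some $x\in 2^\N$ satisfies $c_x\notin g_nV_n$ for all $n$; then $c_x\in G\setminus\bigcup_n g_nV_n$, so $G\neq\bigcup_n g_nV_n$.

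The only real work is the recursion, and I expect its sole delicate point to be clause~(iii): a left translate $hB_\ell$ need not have small $d_{ts}$-diameter just because $B_\ell$ is $d_l$-small (conjugation by $h$ may distort it badly), so the radii must be chosen only \emph{after} the finitely many nodes of a level are in place — which is why the argument cannot start from a pre-chosen neighbourhood basis. Everything else reduces, via the displayed ball arithmetic, to taking the next radius small enough, the existence of the ``pushed-off'' node $h_{s1}$ being the one place where non-discreteness enters. Finally, completeness of $d_{ts}$ is essential: lacking local compactness we cannot expect the sets $\ov{N_{x\begr\ell}}$ to be compact, so it is precisely the diameter bound~(iii) that lets Cantor's intersection theorem produce the point $c_x$.
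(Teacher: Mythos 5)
Your proof is correct and follows essentially the same route as the paper's: a Cantor scheme of shrinking closed sets whose ``fattened'' versions $h_s\ov{B_\ell}^2$ are disjoint, so that a single translate $g_nV_n$ can meet at most one node, together with completeness of a compatible complete metric to extract a point of $\bigcap_\ell \ov{N_{x\begr \ell}}$. The only (cosmetic) divergence is at the end, where you build the full Cantor set and finish with a $\sum_n 2^{-(n+2)}<1$ counting argument, whereas the paper diagonalises along a single branch using only sibling disjointness $f_{s0}\ov{V_n}^2\cap f_{s1}\ov{V_n}^2=\tom$.
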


\begin{proof}
Fix a compatible complete metric $d$ on $G$. We will inductively define symmetric open $V_0\supseteq V_1\supseteq \ldots\ni 1$ and $f_s\in G$ for $s\in 2^{<\N}\setminus \{\tom \}$ such that for any $s\in 2^{n}$ and $i\in \{0,1\}$ 
\begin{itemize}
\item ${\rm diam}(f_{si}\overline V_n)<\frac1{n+1}$,
\item $f_{s0}\overline V_n^2\cap f_{s1}\overline V_n^2=\tom$,
\item $f_{si}\overline V_n\subseteq f_sV_{n-1}$.
\end{itemize}
To see how this is done, begin by choosing $f_0, f_1\in G$ distinct and find $V_0\ni 1$ symmetric open such that $f_0\overline V_0^2\cap f_1\overline V_0^2=\tom$ and ${\rm diam}(f_i\overline V_0)<1$.
Now, suppose $V_{n-1}$ and $f_s$ are defined for all $s\in 2^n$. Then for every $s\in 2^n$, we choose distinct $f_{s0},f_{s1}\in f_sV_{n-1}$ and can then find $1\in V_n\subseteq V_{n-1}$ small enough such that the three properties hold.

Suppose that construction has been done and that $g_n\in G$ are given. We show that $G\neq \bigcup_n g_nV_n$ as follows. First, as $f_0\overline V_0^2\cap f_1\overline V_0^2=\tom$, there must be some $i_0\in \{0,1\}$ such that $f_{i_0}\overline V_0\cap g_0V_0=\tom$. And again, since $f_{i_00}\overline V_1\cap f_{i_01}\overline V_1=\tom$, there must be some $i_1\in \{0,1\}$ such that $f_{i_0i_1}\overline V_1\cap g_1V_1=\tom$. Etc. So inductively, we define $i_0,i_1,\ldots\in \{0,1\}$ such that for any $n$, $f_{i_0i_1\ldots i_n}\overline V_n\cap g_nV_n=\tom$. Since the $f_{i_0i_1\ldots i_n}\overline V_n$ are nested and have vanishing diameter it follows that $\bigcap_nf_{i_0i_1\ldots i_n}\overline V_n$ is non-empty and evidently disjoint from $\bigcup_ng_nV_n$.
\end{proof}

Topological groups $G$ with the property that for any open $V_0\supseteq V_1\supseteq \ldots \ni 1$ there are finite sets $F_n\subseteq G$ with $G=\bigcup_{n}F_nV_n$ are said to be {\em o-bounded} or {\em Menger bounded} \cite{tkachenko}. By Proposition \ref{loc comp} (c), these are clearly a generalisation of the locally compact Polish groups, but it remains open whether they actually coincide with the locally compact groups within the class of Polish groups. 
\begin{prob}\label{o-bounded}
Suppose $G$ is a non-locally compact Polish group. Are there open sets $V_0\supseteq V_1\supseteq \ldots \ni 1$ such that for any finite sets $F_n\subseteq G$, we have $G\neq \bigcup_n F_n V_n$?
\end{prob}
There is quite a large literature on o-boundedness in the context of general topological groups, though less work has been done on the more tractable subclass of Polish groups. T. Banakh \cite{banakh1,banakh2} has verified Problem \ref{o-bounded} under additional assumptions, one of them being Weil completeness \cite{banakh1}. We include a proof of his result here, as it can be gotten by only a minor modification of the proof of Proposition \ref{left-translates}.

\begin{prop}\label{Weil complete}
Suppose $G$ is a Weil complete, non-locally compact Polish group. Then there are open sets 
$$
V_0\supseteq V_1\supseteq\dots\ni 1
$$
such that for any finite subsets $F_n\subseteq G$, $G\neq \bigcup_nF_nV_n$.
\end{prop}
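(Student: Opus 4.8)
The plan is to imitate the proof of Proposition~\ref{left-translates}, with two changes forced by the stronger hypotheses: use the Weil completeness of $G$ to work with a \emph{left-invariant} complete compatible metric $d$, and use the non-local-compactness of $G$ to replace the binary branching by infinite branching — which in fact lets the tree be collapsed into a single approximating sequence.

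Fix such a metric $d$ and write $B(g,r)$ for the open $d$-ball of radius $r$; left-invariance gives $B(g,r)=gB(1,r)$, so a covering of $B(g,r)$ by finitely many translates of $B(1,s)$ is, after translating by $g\inv$, a covering of $B(1,r)$ by the same number of translates of $B(1,s)$. The one place where both hypotheses enter is this: since $G$ is not locally compact, $B(1,r)$ is a neighbourhood of $1$ with non-compact closure, hence not totally bounded in the complete space $(G,d)$, so there is a scale $\delta(r)>0$ such that $B(1,r)$ — and therefore, by left-invariance, every translate $B(g,r)$ — is not covered by any finite union of $d$-balls of radius $\delta(r)$. (The same statement with $G$ in place of a ball follows from $G$ being non-compact.)

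Using this I would choose recursively $\epsilon_0>\epsilon_1>\epsilon_2>\cdots\to 0$ with $\epsilon_{n+1}\le\min\{\epsilon_n/10,\ \delta(\epsilon_n)/3\}$ (and $3\epsilon_0$ below the scale $\delta$ at which $G$ itself is not covered by finitely many $\delta$-balls), and set $V_n:=B(1,\epsilon_n)$, a decreasing sequence of open identity neighbourhoods. Now let finite $F_n\subseteq G$ be given and build points $p_0,p_1,p_2,\dots$ with $p_0\in G$, $p_n\in B(p_{n-1},\epsilon_{n-1})$ for $n\ge1$, and $d(p_n,h)\ge 3\epsilon_n$ for all $h\in F_n$. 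This is possible at each stage because the forbidden region $\bigcup_{h\in F_n}B(h,3\epsilon_n)$ is a union of $|F_n|$ translates of $B(1,3\epsilon_n)$, while $B(p_{n-1},\epsilon_{n-1})$ (or $G$ itself, for $n=0$) cannot be covered by finitely many such translates since $3\epsilon_n\le\delta(\epsilon_{n-1})$. As $\sum_n\epsilon_n<\infty$ and $d$ is complete, $(p_n)$ converges to some $x\in G$ with $d(x,p_n)\le\sum_{k\ge n}\epsilon_k<2\epsilon_n$, so $x\in B(p_n,2\epsilon_n)$; and $B(p_n,2\epsilon_n)$ is disjoint from $F_nV_n=\bigcup_{h\in F_n}B(h,\epsilon_n)$, because $d(p_n,h)\ge 3\epsilon_n>2\epsilon_n+\epsilon_n$ for $h\in F_n$. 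Hence $x\notin\bigcup_n F_nV_n$, so $G\neq\bigcup_n F_nV_n$.

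To stay literally within the framework of Proposition~\ref{left-translates} one keeps a tree, now $f_s$ for $s\in\N^{<\N}$: inside each $f_sV_{n-1}$ one places countably many children $f_{sj}$ pairwise $d$-separated by more than $6\epsilon_n$ and at distance $<\epsilon_{n-1}-2\epsilon_n$ from $f_s$, so that the balls $B(f_{sj},2\epsilon_n)$ are nested in the parent ball and shrink to a point; a single translate $hV_n$ then meets at most one such child ball, so $F_nV_n$ blocks at most $|F_n|$ children of the current node and one always survives, and one follows the surviving branch. The real obstacle — and what uses both hypotheses — is producing, inside a single $V_{n-1}$-translate and with one $V_n$ fixed in advance for the whole of level $n$, infinitely many adequately separated children: this is precisely the non-total-boundedness of $B(1,\epsilon_{n-1})$ coming from non-local-compactness, made uniform over the (infinitely many) level-$n$ nodes by the left-invariance of $d$ coming from Weil completeness. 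Without these one is stuck with finite branching and with blocking only a single translate per level, as in Proposition~\ref{left-translates}.
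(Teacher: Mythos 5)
Your proof is correct and rests on exactly the same two ingredients as the paper's: Weil completeness supplies a complete left-invariant metric, and non-local-compactness makes every ball $B(1,r)$ fail to be totally bounded at some scale $\delta(r)$, uniformly over translates by left-invariance. The paper organises this as an infinitely branching tree of translates (extending Proposition~\ref{left-translates}), whereas your main version chooses the points $p_n$ greedily after seeing the $F_n$ — a mild streamlining of the same argument, and your tree variant is precisely the paper's proof.
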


\begin{proof}
By the same  inductive procedure as before, we choose open $V_n\ni 1$ and $f_s\in G$ for every $s\in \N^{<\N}\setminus \{\tom\}$ such that for any $s\in \N^n$ and $i\neq j\in \N$,
\begin{itemize}
\item ${\rm diam}(f_{si}\overline V_n)<\frac1{n+1}$,
\item $f_{si}\overline V_n^2\cap f_{sj}\overline V_n^2=\tom$,
\item $f_{si}\overline V_n\subseteq f_sV_{n-1}$.
\end{itemize}
This is possible by the left-invariance of $d$ and the fact that no open set $V\ni 1$ is relatively compact and hence contains infinitely many disjoint translates of some open set $1\in U\subseteq V$. The remainder of the proof follows that of Proposition \ref{left-translates}.
\end{proof}

For good order, let us also state the analogue of Proposition \ref{Weil complete} for $2$-sided translates.

\begin{prop}\label{two-sided invariant}
Suppose $G$ is a  non-locally compact Polish SIN group. Then there are open sets 
$$
V_0\supseteq V_1\supseteq\dots\ni 1
$$
such that for any finite subsets $F_n, E_n\subseteq G$, $G\neq \bigcup_nF_nV_nE_n$.
\end{prop}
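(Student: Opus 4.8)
The plan is to exploit the SIN hypothesis through Klee's theorem, recalled above: $G$ carries a compatible \emph{bi-invariant} metric $d$, which, $G$ being Polish, is automatically complete. The point of bi-invariance is that for fixed $f,e\in G$ the map $x\mapsto fxe$ is a $d$-isometry, so a two-sided translate $fVe$ of the ball $V=B_d(1,\epsilon)$ is again a ball of radius $\epsilon$, namely $B_d(fe,\epsilon)$. Consequently, for finite $F_n,E_n\subseteq G$ the set $F_nV_nE_n$ is a union of at most $|F_n|\cdot|E_n|$ balls of radius $\epsilon_n$ --- exactly the sort of set controlled in the proof of Proposition \ref{Weil complete} --- so what follows is a minor variant of that argument, now keeping track of two finite index sets instead of one.

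First I would recall the consequence of non-local-compactness already used in Propositions \ref{left-translates} and \ref{Weil complete}: since no neighbourhood of $1$ is relatively compact, and relative compactness equals total boundedness for the complete metric $d$, for every open $V\ni 1$ there are an open $U\ni 1$ with $U\subseteq V$ and points $u_0,u_1,u_2,\dots\in G$ such that the translates $u_iU$ are pairwise disjoint and contained in $V$. Indeed, pick $\rho$ with $B_d(1,\rho)\subseteq V$; as $B_d(1,\rho/2)$ is a neighbourhood of $1$, hence not totally bounded, there are $\delta\in(0,\rho)$ and a $\delta$-separated sequence $(u_i)$ in $B_d(1,\rho/2)$, and one may take $U=B_d(1,\delta/3)$. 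Fix such $U_V,\delta_V,(u_i^V)$ for each $V$.

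Next comes the inductive construction of a scheme indexed by $\N^{<\N}$. Put $V_{-1}=B_d(1,1)$; having defined $V_{n-1}=B_d(1,\epsilon_{n-1})$, apply the previous paragraph to $V=V_{n-1}$ to get $\delta_{n-1}$, $U_{n-1}=B_d(1,\delta_{n-1}/3)$ and $(u_i^{(n-1)})_i$, choose $\epsilon_n<\min\{\delta_{n-1}/4,\ \tfrac1{2(n+1)}\}$, set $V_n=B_d(1,\epsilon_n)$, and for $s\in\N^n$, $i\in\N$ define $f_{si}=f_s\,u_i^{(n-1)}$ (with $f_{(i)}:=u_i^{(-1)}$ at the bottom, the base case being handled as in Proposition \ref{Weil complete}). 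Bi-invariance then yields, for $s\in\N^n$ and $i\ne j$: ${\rm diam}(f_{si}\overline{V_n})\le 2\epsilon_n<\tfrac1{n+1}$; \ $d(f_{si},f_{sj})=d(u_i^{(n-1)},u_j^{(n-1)})\ge\delta_{n-1}>4\epsilon_n$; \ and $f_{si}\overline{V_n}=f_su_i^{(n-1)}\overline{V_n}\subseteq f_su_i^{(n-1)}U_{n-1}\subseteq f_sV_{n-1}$, the last inclusion because $\overline{V_n}\subseteq U_{n-1}$ (as $\epsilon_n<\delta_{n-1}/3$). The separation bound $d(f_{si},f_{sj})>4\epsilon_n$ is the useful one: a ball of radius $\epsilon_n$ meeting both $f_{si}\overline{V_n}$ and $f_{sj}\overline{V_n}$ would force $d(f_{si},f_{sj})\le 4\epsilon_n$, so each ball of radius $\epsilon_n$ meets $f_{si}\overline{V_n}$ for at most one $i$.

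To finish, let finite $F_n,E_n\subseteq G$ be given. Then $F_nV_nE_n$ is a union of at most $|F_n|\,|E_n|$ balls of radius $\epsilon_n$, so at every node $s\in\N^n$ all but at most $|F_n|\,|E_n|$ of the children satisfy $f_{si}\overline{V_n}\cap F_nV_nE_n=\emptyset$. Choosing recursively $i_0,i_1,i_2,\dots$ with $f_{(i_0,\dots,i_n)}\overline{V_n}\cap F_nV_nE_n=\emptyset$ for every $n$, the closed sets $C_n=f_{(i_0,\dots,i_n)}\overline{V_n}$ are nested (by the third scheme condition) with ${\rm diam}(C_n)\to 0$, so by completeness of $d$ their intersection is a single point $x$; since $x\notin F_nV_nE_n$ for all $n$, we conclude $G\ne\bigcup_nF_nV_nE_n$. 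I do not anticipate a genuine obstacle: the whole argument is a reindexing of the proof of Proposition \ref{Weil complete} once one observes that bi-invariance turns two-sided translates of balls into balls, and the only mildly delicate step is the extraction in the second paragraph of infinitely many pairwise disjoint translates of a \emph{single} open $U\ni 1$ all lying inside the prescribed $V$, which is arranged by descending to a smaller ball before separating points.
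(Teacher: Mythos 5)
Your proof is correct and follows essentially the same route as the paper, which simply runs the tree argument of Proposition \ref{Weil complete} with the complete bi-invariant metric supplied by Klee's theorem; your observation that bi-invariance turns each two-sided translate $fV_ne$ into the ball $B_d(fe,\epsilon_n)$ is exactly the point that makes that argument go through for finitely many two-sided translates. The details you supply (the $\delta$-separated points inside a smaller ball, the $4\epsilon_n$ separation of the nodes, and the nested closed sets of vanishing diameter) all check out.
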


\begin{proof}Same proof as for Proposition \ref{Weil complete}, using the fact that any $2$-sided invariant compatible metric on $G$ is complete.
\end{proof}


\subsection{Narrow sequences and conjugacy classes}
While the results of the previous section essentially relied on various notions of completeness, we shall now investigate which role conjugacy classes play in $2$-sided coverings. For simplicity of notation, if $G$ is a Polish group and $g\in G$, we let $g^G=\{fgf\inv \del f\in G\}$ denote the conjugacy class of $g$.

\begin{thm}\label{small conj}
Suppose $G$ is a non-discrete Polish group such that the set 
$$
A=\{g\in G\del 1\in {\rm cl}(g^G)\}
$$
is not comeagre in any open neighbourhood of $1$.
Then there are open 
$$
V_0\supseteq V_1\supseteq \ldots\ni 1
$$
such that for any $f_n,g_n\in G$, $G\neq\bigcup_nf_nV_ng_n$.
\end{thm}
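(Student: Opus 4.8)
\emph{Proof strategy.} The plan is to run the Cantor--scheme construction of Proposition~\ref{left-translates} while adding exactly the ingredient needed to defeat \emph{two-sided} translates $fV_ng$. For $S\subseteq G$ write $\widetilde S=\bigcup_{h\in G}hSh\inv$; this set is conjugation invariant, open if $S$ is, and a neighbourhood of $1$ if $S$ is. Fixing a countable basis $(W_k)_k$ of symmetric open neighbourhoods of $1$, we have $g\in A\iff\forall k\ \exists h\ hgh\inv\in W_k\iff\forall k\ g\in\widetilde{W_k}$, so $A=\bigcap_k\widetilde{W_k}$ is $G_\delta$. The first step is to reformulate the hypothesis, by a Baire--category argument, as the statement $(\star)$: \emph{for every open $U\ni1$ there are a nonempty open $O\subseteq U$ and some $k$ with $O\cap\widetilde{W_k}=\emptyset$}; indeed $U\setminus A=\bigcup_k(U\setminus\widetilde{W_k})$ is non-meagre in the Polish space $U$, so some closed set $U\setminus\widetilde{W_k}$ is non-meagre, hence has nonempty interior, which we take for $O$ (and then $1\notin O$, since $1\in\widetilde{W_k}$).

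The second step is the geometric observation replacing the ``one sibling stays untouched'' lemma of Proposition~\ref{left-translates}. Let $V$ be symmetric open and suppose a two-sided translate $fVg$ meets both $f_{s0}\overline V$ and $f_{s1}\overline V$, say $p=fv_1g=f_{s0}a_1$ and $q=fv_2g=f_{s1}a_2$ with $v_i\in V$, $a_i\in\overline V$. Then
\[
pq\inv=fv_1v_2\inv f\inv\in fV^2f\inv\subseteq\widetilde{V^2},
\qquad\text{while also}\qquad pq\inv=f_{s0}(a_1a_2\inv)f_{s1}\inv .
\]
Conjugating by $f_{s1}$ and using conjugation invariance of $\widetilde{V^2}$ gives $f_{s1}\inv f_{s0}(a_1a_2\inv)\in\widetilde{V^2}$, i.e.\ $f_{s1}\inv f_{s0}\in\widetilde{V^2}\,\overline V^2$. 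Hence, if the siblings are chosen so that $f_{s1}\inv f_{s0}\notin\widetilde{V^2}\,\overline V^2$, then no two-sided translate of $V$ meets both of them, and one may descend into the untouched sibling exactly as in Proposition~\ref{left-translates}.

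The third step is the construction. I would build symmetric open $V_0\supseteq V_1\supseteq\cdots\ni1$ with $\overline{V_n}\subseteq V_{n-1}$ and $V_n$ inside the $n$-th basic neighbourhood of $1$ (so $(V_n)$ is a neighbourhood basis), together with $f_s\in G$ for $s\in2^{<\N}$. At stage $n$, with the $f_s$ ($|s|=n$) and $V_{n-1}$ in hand, apply $(\star)$ to $U=V_{n-1}$ to get $O_n\subseteq V_{n-1}$ and $k_n$ with $O_n\cap\widetilde{W_{k_n}}=\emptyset$; pick $o_n\in O_n$ (so $o_n\ne1$) and set $f_{s0}=f_s$, $f_{s1}=f_so_n$ for all $|s|=n$. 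Then choose $V_n\subseteq V_{n-1}$ symmetric open so small that: (i) $V_n^2\subseteq W_{k_n}$; (ii) ${\rm diam}(f_{si}\overline{V_n})<\tfrac1{n+1}$ for all the finitely many $s,i$; and (iii) $o_n\overline{V_n}^2\subseteq O_n$ and $\overline{V_n}^2o_n\subseteq O_n$. From (i), $\widetilde{V_n^2}\subseteq\widetilde{W_{k_n}}$, and combining with (iii): if $o_n\inv=wv$ with $w\in\widetilde{V_n^2}$ and $v\in\overline{V_n}^2$, then $w\inv=vo_n\in\overline{V_n}^2o_n\subseteq O_n$, contradicting $O_n\cap\widetilde{W_{k_n}}=\emptyset$; hence $o_n\inv\notin\widetilde{V_n^2}\,\overline{V_n}^2$. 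Since $f_{s1}\inv f_{s0}=o_n\inv$, the observation of the second step applies at level $n$. (The regions $f_{si}\overline{V_n}$ then lie inside $f_sV_{n-1}$ and are distinct, which is routine from (i), (iii) and $\overline{V_n}\subseteq V_{n-1}$; the root is handled the same way with $f_\emptyset:=1$ and $U:=G$.)

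Finally, given arbitrary $f_n,g_n\in G$, choose recursively $i_0,i_1,\dots\in\{0,1\}$ so that $f_{i_0\cdots i_n}\overline{V_n}$ is disjoint from $f_nV_ng_n$ --- possible since the latter meets at most one child of the current node. The sets $f_{i_0\cdots i_n}\overline{V_n}$ are nonempty, closed, nested, and of vanishing diameter, so their intersection (in a compatible complete metric) is a single point, which avoids every $f_nV_ng_n$; hence $G\ne\bigcup_nf_nV_ng_n$. I expect the crux to be the second step: recognising that a two-sided translate straddling two separated pieces is forced to throw a ``displacement'' $f_{s1}\inv f_{s0}$ into the conjugacy saturation $\widetilde{V_n^2}$, and realising that the hypothesis on $A$ --- via $(\star)$ --- is precisely what lets one keep $f_{s1}\inv f_{s0}$ outside the relevant part of that saturation; the remainder is a routine variant of Proposition~\ref{left-translates}.
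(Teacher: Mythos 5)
Your proposal is correct and follows essentially the same route as the paper: your step $(\star)$ is the paper's observation that $A$ is $G_\delta$ and hence every neighbourhood of $1$ contains a non-empty open set disjoint from some conjugacy-saturated neighbourhood $\widetilde{W_k}$, your "straddling" computation is the paper's Claim (there phrased as: either $fUg\cap aU=\emptyset$ or $fUg\cap bU=\emptyset$, proved by comparing $(gaU)^{-1}gbU\subseteq Ua^{-1}bU$ with $fU^2f^{-1}$), and the Cantor scheme is the one from Proposition~\ref{left-translates}. The only implicit point worth noting is that your final contradiction needs $w^{-1}\in\widetilde{W_{k_n}}$, which your symmetry assumptions on $W_k$ and $V_n$ do deliver.
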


\begin{proof}Fix a compatible complete metric $d$ on $G$. Note that 
$$
A=\bigcap_n\{g\in G\del \e f\in G\;\; d(1,fgf\inv)<1/n\},
$$
which is a countable intersection of open sets. So, as $A$ is not comeagre in any neighbourhood of $1$, for any open $V\ni 1$ there must be some $n$ such that
$$
\{g\in G\del \e f\in G\;\; d(1,fgf\inv)<1/n\}
$$
is not dense in $V$ and therefore disjoint from some non-empty open $W\subseteq V$.
It follows that for any open $V\ni 1$ there  are non-empty open $W\subseteq V$ and  $U\ni 1$ such that $W\cap fUf\inv =\tom$ for any $f\in G$.

\begin{claim}
For any non-empty open $D\subseteq G$ there are $a,b\in D$ and an open set $U\ni 1$ such that for any $f,g\in G$, either 
$$
fUg\cap aU=\tom
$$
or 
$$
fUg\cap bU=\tom.
$$
\end{claim}
To see this, pick non-empty open sets $W\subseteq D\inv D$ and $U=U\inv\ni 1$ such that for any $f\in G$, $W\cap fUf\inv =\tom$. Let also $a,b\in D$ be such that $a\inv b\in W$. By shrinking $U$ if necessary, we can suppose that $Ua\inv bU\subseteq W$ and that 
$$
Ua\inv bU\cap\big( fUf\inv\cdot fUf\inv\big)=Ua\inv bU\cap fU^2f\inv=\tom
$$ 
for any $f\in G$.

Thus, for any $f,g\in G$, 
$$
(gaU)\inv \cdot gbU\cap (fUf\inv)\inv\cdot(fUf\inv)=\tom,
$$
whence either
$$
aU\cap g\inv fUf\inv=\tom
$$
or 
$$
bU\cap g\inv fUf\inv=\tom.
$$
Since $f,g$ are arbitrary, the claim follows.

Using the claim, we can inductively define open sets $V_0\supseteq V_1\supseteq\ldots\ni 1$ and $a_s\in G$ for $s\in 2^{<\N}\setminus \{\tom\}$ such that for any $s\in 2^n$, $i\in \{0,1\}$ and $f,g\in G$
\begin{itemize}
\item ${\rm diam}(a_{si}\overline V_n)<\frac1{n+1}$,
\item $a_{si}\overline V_n\subseteq a_sV_{n-1}$,
\item either $a_{s0}\overline V_n\cap fV_ng=\tom$ or $a_{s1}\overline V_n\cap fV_ng=\tom$.
\end{itemize}
So if $f_n, g_n\in G$ are given, we can inductively define $i_0,i_1,\ldots\in \{0,1\}$ such that 
$$
a_{i_0i_1\ldots i_n}\overline V_n\cap f_nV_ng_n=\tom
$$ 
for every $n$. It follows that $\bigcap_na_{i_0i_1\ldots i_n}\overline V_n$ is non-empty and disjoint from $\bigcup_nf_nV_ng_n$.
\end{proof}


\subsection{Narrow sequences in oligomorphic groups}
As mentioned earlier, the non-Archimedean Polish groups are exactly those isomorphic to closed subgroups of $S_\infty$. A closed subgroup $G\leqslant S_\infty$ is said to be {\em oligomorphic} if for any finite set $A\subseteq \N$ and $n\geqslant 1$, the pointwise stabiliser $G_A=\{g\in G\del \a x\in A\; g(x)=x\}$ induces only finitely many distinct orbits on $\N^n$. In particular, if one views  $\N$ as a discrete metric space with distance $1$ between all points, then an oligomorphic closed subgroup satisfies condition (3) of Proposition \ref{roelcke}, showing that it must be Roelcke precompact. In fact, Tsankov \cite{tsankov} characterised the Roelcke precompact closed subgroups of $S_\infty$ as those that can be written as inverse limits of oligomorphic groups.

\begin{thm}\label{oligo}
Let $G$ be an oligomorphic closed subgroup of $S_\infty$. Then there is a neighbourhood basis at $1$, $V_0\supseteq V_1\supseteq V_2\supseteq \ldots\ni 1$ such that 
$$
G\neq\bigcup_nF_nV_nE_n
$$
for all finite subsets $F_n,E_n\subseteq G$.
\end{thm}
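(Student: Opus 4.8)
\emph{Plan.}
Since $G$ is a closed subgroup of $S_\infty$, a neighbourhood basis at $1$ is given by the pointwise stabilisers $G_A$ of finite $A\subseteq\N$, so I would look for the $V_n$ among these. The load‑bearing reformulation is elementary: for $f,e\in G$ one has $g\in fG_Ae$ exactly when $g$ extends the finite partial injection $e^{-1}(a)\mapsto f(a)$ ($a\in A$), whose domain $e^{-1}(A)$ has size $|A|$. Hence $g\in F_nV_nE_n$ iff $g$ extends one of the $|F_n|\cdot|E_n|$ partial injections of domain‑size $|A_n|$ arising this way, and each $fV_ne$ — so also $F_nV_nE_n$ — is clopen in $G$. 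The task thus becomes: given arbitrary finite $F_n,E_n$, produce $g\in G$ no restriction of which to these finitely many finite ``copies of $A_n$'' matches the prescribed pattern.

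Here is where oligomorphicity enters: $G=\mathrm{Aut}(M)$ for an $\aleph_0$‑categorical structure $M$ on $\N$, so the algebraic closure of a finite set is finite, with a bound $M_G$ depending only on the cardinality, and back‑and‑forth is available. I would first fix the neighbourhood basis by choosing finite sets $B_0\subseteq B_1\subseteq\cdots$ with $\bigcup_jB_j=\N$ and with $|B_{j+1}|$ so large that $|\mathrm{acl}(B_{j+1})|>M_G\big(2|\mathrm{acl}(B_j)|\big)$, then setting $A_j=\mathrm{acl}(B_j)$ and $V_j=G_{A_j}$; as each $A_j$ is algebraically (hence definably) closed and $\bigcup_jA_j=\N$, the $V_j$ form a decreasing neighbourhood basis at $1$.

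Fixing a complete compatible metric on $G$, I would then build a Cantor scheme $(W_s)_{s\in\N^{<\N}}$ of non‑empty clopen sets with distinguished points $a_s\in W_s$: each $W_s$ is a two‑sided coset $a_sV_{\sigma(s)}\cap V_{\sigma(s)}a_s$ of a basic subgroup, with the ``resolution'' $\sigma$ increasing along the tree, tending to $\infty$ along branches but kept as slowly growing as possible; moreover $\overline{W_{si}}\subseteq W_s$, the $W_{si}$ are pairwise disjoint, and $\mathrm{diam}(W_s)\to0$ along branches. The key property to engineer at each node $s$ is that, with $D_s:=A_{\sigma(s)}\cup a_s^{-1}(A_{\sigma(s)})$, the increments $w_i:=a_s^{-1}a_{si}\in G_{D_s}$ can be chosen — by a bookkeeping argument exploiting that every point outside $\mathrm{acl}(D_s)$ has infinite $G_{D_s}$‑orbit — so that for each $x\notin\mathrm{acl}(D_s)$ the map $i\mapsto a_{si}(x)$ is finite‑to‑one. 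Given $(F_n,E_n)_n$ I then construct a branch $\xi$: to handle the requirement $G\neq F_nV_nE_n$ I descend to a node $s=\xi\restriction m$ whose resolution is small enough that $|A_n|>M_G\big(2|A_{\sigma(s)}|\big)$, which forces $e^{-1}(A_n)\not\subseteq\mathrm{acl}(D_s)$ for every $e$; by the finite‑to‑one property only finitely many children $a_{si}$ fail to dodge all the forbidden patterns, and among the rest I take one whose index $i$ is so large that $\sigma(si)$ already pins $g$ down on the finite set $\bigcup_{e\in E_n}e^{-1}(A_n)$. Since the resolutions grow slowly, the levels available for $n$ grow with $n$, so all requirements can be met at distinct levels. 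Finally $g:=\bigcap_mW_{\xi\restriction m}$ is a single point of $G$, it agrees with the chosen good child on $\bigcup_{e\in E_n}e^{-1}(A_n)$ for every $n$, hence extends none of the forbidden partial injections, so $g\notin\bigcup_nF_nV_nE_n$.

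The main obstacle is the construction and use of the Cantor scheme, and in particular the tension at each level between two opposite needs: the node where we dodge $F_nV_nE_n$ must be \emph{coarse} (small $\sigma(s)$) so that the size‑$|A_n|$ sets $e^{-1}(A_n)$ still protrude from $\mathrm{acl}(D_s)$ and the finite‑to‑one branching can be invoked, while the child region we ultimately enter must be \emph{fine} enough to fix $g$ on all of $\bigcup_{e\in E_n}e^{-1}(A_n)$ so that $g$ inherits the dodge from $a_{si}$. The way out is to separate these two roles — dodge at a coarse node, but recurse into a high‑index child, using $\sigma(si)\to\infty$ as $i\to\infty$ — and verifying this, together with the existence of the finite‑to‑one increment families inside the prescribed small clopen sets (which is precisely where finiteness of algebraic closure in the $\aleph_0$‑categorical structure is used, and which genuinely needs more than non‑local‑compactness of $G$), is where essentially all the effort goes.
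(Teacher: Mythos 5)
Your setup is sound as far as it goes: the reformulation of $fG_Ae$ as the set of $g$ extending a partial injection with domain $e^{-1}(A)$, the use of finiteness of algebraic closure to guarantee that $e^{-1}(A_n)$ protrudes from $\acl{D_s}$ once $|A_n|>M_G(2|A_{\sigma(s)}|)$, and the Neumann-style choice of increments making $i\mapsto a_{si}(x)$ finite-to-one for $x\notin\acl{D_s}$ are all correct and are essentially the same ingredients the paper uses. The gap is in the scheduling. Your only mechanism for transferring the dodge from the child $a_{si}$ to the limit $g$ is to demand $A_{\sigma(si)}\supseteq\bigcup_{e\in E_n}e^{-1}(A_n)$ (or at least the chosen witnesses), and the adversary controls where $e^{-1}(A_n)$ sits in $\N$: choosing $e$ so that $e^{-1}(A_n)\cap A_j=\tom$ for all $j<2n$, say, forces $\sigma(si)\geqslant 2n$. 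But a requirement $n'$ can only be dodged at a node $s$ with $\sigma(s)\leqslant n'-1$, so once the branch jumps to resolution $J$, every requirement $n'\leqslant J$ must already have been handled at or before the current node. Handling requirements $\sigma_m+1,\ldots,J$ at node $s$ forces $\sigma(si)\geqslant\max_{n'\leqslant J}j_{n'}$, which with $j_{n'}\geqslant 2n'$ exceeds $J$; the set of requirements you must clear before descending never stabilises. So ``dodge at a coarse node, recurse into a high-index child'' does not resolve the tension you yourself identify --- it reproduces it one level down, and the regress diverges.

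The fix is to decouple the agreement mechanism from the fixed neighbourhood basis: instead of asking $A_{\sigma(si)}$ to swallow the witnesses, freeze the (finitely many) chosen witness points into the stabilised set at the next node, i.e.\ replace $D_{si}$ by $\acl{D_{si}\cup\{\text{witnesses}\}}$ and require all later increments to lie in the stabiliser of that set; since $g$ is the pointwise limit of the $a_u$ and these are eventually constant on each frozen point, $g$ inherits the dodge. Then the resolution can grow by one per level and every requirement gets a slot. This is what the paper's proof does: it first rewrites $\bigcup_nF_nV_nE_n$ as a countable union of \emph{single} cosets $g_iG_{B_i}$ with $|B_i|\til\infty$, then builds $f_i$ and algebraically closed sets $A_{i+1}=\acl{A_i\cup\{m,i+1,f_{i+1}\inv(i+1)\}}$ adaptively, so that only three points are added per step, the growth is controlled by $|A_{i+1}|\leqslant M(|A_i|+3)$, and an explicit clause (condition (6) there) protects in advance the finitely many cosets $g_jG_{B_j}$ whose $B_j$ could be prematurely swallowed by $A_{i+1}$. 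Without either that protection clause or the witness-freezing device, your construction as written cannot meet all the requirements.
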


\begin{proof}
Since, for any finite subset $A\subseteq \N$, the pointwise stabiliser $G_A$
induces only finitely many orbits on $\N$, it follows that the {\em algebraic closure} of $A$,
$$
\acl A=\{x\in \N\del G_A\cdot x\text{ is finite}\}
$$
is finite. Using that $G_{fA}\cdot fx=fG_Af\inv fx= f   G_A\cdot x$, one sees that $f\cdot \acl A=\acl {fA}$ for any $f\in G$, and, since $G_{\acl A}$ has finite index in $G_A$, we also find that $\acl{\acl A}=\acl A$. Finally, for any $n\geqslant 1$, $G$ induces only finitely many orbits on the space $\N^{[n]}$ of $n$-elements subsets of $\N$. So, as $|\acl A|=|f\cdot \acl A|=|\acl {fA}|$, the quantity
$$
M(n)=\max\big(|\acl A|\del A\in \N^{[n]}\big)
$$
is well-defined.

\begin{claim}
For all $g_i\in G$ and finite sets $B_i\subseteq \N$ with
$$
M(0)<|B_0|\leqslant |B_1|\leqslant\ldots
$$
and $|B_i|\til \infty$, we have $G\neq \bigcup_{i\in \N}g_iG_{B_i}$.
\end{claim}
To prove the claim, we will inductively construct finite algebraically closed sets  $A_i\subseteq \N$ and elements $f_i\in G$ such that the following conditions hold for all $i\in \N$,
\begin{enumerate}
\item $A_i\subseteq A_{i+1}$,
\item $f_{i+1}\in f_iG_{A_i}$,
\item $i\in A_i$,
\item $i\in f_iA_i$,
\item $f_iG_{A_i}\cap g_iG_{B_i}=\tom$,
\item for any $j\in \N$, if $B_j\subseteq A_i$, then also $f_iG_{A_i}\cap g_jG_{B_j}=\tom$.
\end{enumerate}

Assume first that this construction has been made. Then, as the $A_i$ is are increasing with $i$, we have by (2) that $f_i|_{A_i}= f_j|_{A_i}$ for all $i\leqslant j$. Also, by (3), for any $i\in \N$, $f_i(i)=f_{i+1}(i)=f_{i+2}(i)=\ldots$ and by  (4) $f_i\inv(i)\in A_i$, whence also  
$$
f_if_i\inv(i)=f_{i+1}f_i\inv(i)=f_{i+2}f_i\inv(i)=\ldots
$$
and so 
$$
f_i\inv(i)=f_{i+1}\inv(i)=f_{i+2}\inv(i)=\ldots.
$$
It follows that $(f_i)$ is both left and right Cauchy and thus converges in $G$ to some $f\in\bigcap_i f_iG_{A_i}$. Since by (5) we have $f_iG_{A_i}\cap g_iG_{B_i}=\tom$ it follows that $f\notin \bigcup_ig_iG_{B_i}$.

To begin the construction, set $A_{-1}=\acl\tom$ and $f_{-1}=1$. Then (6) hold since  $|A_{-1}|<|B_j|$ for all $j$.

Now, suppose $f_{-1},\ldots,f_i$ and $A_{-1},\ldots,A_i$ have been chosen such that (1)-(6) hold. Choose first $n\geqslant {i+1}$ large enough such that $M(|A_i|+3)<|B_{n+1}|$ and set
$$
C=\{i+1\}\cup B_0\cup\ldots \cup B_n\cup f_i\inv g_0B_0\cup\ldots\cup f_i\inv g_nB_n.
$$
As $A_i$ is algebraically closed, every orbit of $G_{A_i}$ on $\N\setminus A_i$ is infinite.  Therefore, by a lemma of P. M. Neumann \cite{neumann}, there is some $h\in G_{A_i}$ such that $h(C\setminus A_i)\cap (C\setminus A_i)=\tom$. Setting $f_{i+1}=f_ih\in f_iG_{A_i}$, we note that if $m\in C\setminus A_i$, then $h(m)\notin C$, whence
$$
f_{i+1}(m)=f_ih(m)\notin g_0B_0\cup\ldots\cup g_nB_n.
$$
If possible, choose $m\in B_{i+1}\setminus A_i$ and set
$$
A_{i+1}=\acl{A_i\cup\{m, i+1,f_{i+1}\inv(i+1)\}}.
$$ 
Otherwise, let 
$$
A_{i+1}=\acl{A_i\cup\{i+1,f_{i+1}\inv(i+1)\}}.
$$

That (1)-(4) hold are obvious by the choice of $A_{i+1}$. For (5), i.e., that 
$$
f_{i+1}G_{A_{i+1}}\cap g_{i+1}G_{B_{i+1}}=\tom,
$$ 
note that if $B_{i+1}\subseteq A_i$, then this is verified by condition (6) for the previous step and the fact that $f_{i+1}G_{A_{i+1}}\subseteq f_iG_{A_i}$. On the other hand, if  $B_{i+1}\not\subseteq A_i$, then there is some $m\in (A_{i+1}\cap B_{i+1})\setminus A_i$, whence $f_{i+1}(m)\notin g_{i+1}B_{i+1}$. In this case, $f_{i+1}(m)\neq g_{i+1}(m)$ and so 
$$
f_{i+1}G_{A_{i+1}}\cap g_{i+1}G_{B_{i+1}}=\tom.
$$
Finally, suppose that $B_j\subseteq A_{i+1}$ for some $j$, whence by the choice of $n$ we have $j\leqslant n$. If already $B_j\subseteq A_i$, then 
$$
f_{i+1}G_{A_{i+1}}\cap g_jG_{B_j}=\tom
$$
holds by (6) at the previous step. And if, on the other hand, there is some $m\in (A_{i+1}\cap B_j)\setminus A_i$, then $m\in C\setminus A_i$ and so $f_{i+1}(m)\notin g_jB_j$, whence
$$
f_{i+1}G_{A_{i+1}}\cap g_{j}G_{B_{j}}=\tom,
$$
which ends the construction and therefore verifies the claim.

To construct the neighbourhood basis $(V_n)$ at $1$, we pick finite algebraically closed sets $A_0\subseteq A_1\subseteq \ldots\subseteq \N=\bigcup_{n\in \N}A_n$ such that $M(0)<|A_0|$ and let $V_n=G_{A_n}$. Noting that for any $f,h\in G$, $fG_{A_n}h\inv= fh\inv G_{hA_n}$, we see that if $F_n$ and $E_n$ are finite subsets of $G$, we have
$$
\bigcup_nF_nV_nE_n=\bigcup_nF_nG_{A_n}E_n=\bigcup_ng_nG_{B_n}
$$
for some  $g_n\in G$ and $B_n\subseteq \N$ as in the claim. It thus follow that $G\neq \bigcup_nF_nV_nE_n$.
\end{proof}


\subsection{Narrow sequences in Roelcke precompact groups}
Suppose $G$ is a group acting by isometries on a metric space $(X,d)$. For any $n\geqslant 1$, we let $G$ act diagonally on $X^n$, i.e., 
$$
g\cdot (x_1,\ldots, x_n)=(gx_1,\ldots, gx_n),
$$
and equip $X^n$ with the supremum metric $d_\infty$ defined from $d$ by 
$$
d_\infty\big((x_1,\ldots,x_n),(y_1, \ldots,y_n)\big)=\sup_{1\leqslant i\leqslant n }d(x_i,y_i).
$$
Also, for any $\overline x\in X^n$ and $\eps>0$, let 
$$
V(\overline x, \eps)=\{g\in G\del d_\infty(g\overline x, \overline x)<\eps\}.
$$
We then have the following easy facts 
\begin{enumerate}
\item $V(\overline x,\eps)\cdot V(\overline x,\delta)\subseteq V(\overline x,\eps+\delta)$,
\item $f\cdot V(\overline x,\eps)\cdot f\inv =V(f\overline x,\eps)$,
\item for $|\overline x|=|\overline y|$, we have $V(\overline x,\sigma)\subseteq V\big(\overline y,\sigma+2d_\infty(\overline x,\overline y)\big)$,
\item for any subset $U\subseteq G$,
$$
d_H(U\cdot x,U\cdot y)\leqslant d(x,y),
$$
where $d_H$ denotes the Hausdorff distance induced by $d$.
\end{enumerate}
Also, an {\em $\eps$-ball} in $X$ is any subset of the form 
$$
B(x,\eps)=\{y\in X\del d(x,y)<\eps\}
$$
and if $D\subseteq X$ is any subset, we let 
$$
(D)_\eps=\{y\in X \del \e x\in D\; d(x,y)<\eps\}.
$$

For the following sequence of lemmas, suppose $G$ is a Roelcke precompact Polish group acting continuously and by isometries on a separable complete metric space $(X,d)$  with a dense orbit.

\begin{lemme}\label{covering}
For any $\eps>\delta>0$, $\sigma>0$ and $\overline x\in X^n$, the set
$$
\B(\overline x,\sigma, \delta)=\{y\in X\del V(\overline x,\sigma)\cdot y \text{ can be covered by finitely many $\delta$-balls}\}
$$
can be covered by finitely many $\eps$-balls.
\end{lemme}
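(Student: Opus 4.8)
The plan is to leverage Roelcke precompactness in the guise of approximate oligomorphicity of the action (Proposition~\ref{roelcke}), applied not to $X^n$ but to $X^{n+1}$, so as to split $\B(\overline x,\sigma,\delta)$ into finitely many pieces each of which is visibly covered by finitely many $\eps$-balls. First I would fix $\eta>0$ small enough that $2\eta<\sigma$ and $2\eta<\eps-\delta$; this is possible because $\sigma>0$ and $\eps>\delta$. Since $G$ is Roelcke precompact, its continuous isometric action on $(X,d)$ with dense orbit satisfies condition~(3) of Proposition~\ref{roelcke}, so taking $U=G$ and the integer $n+1$ there is a finite $A\subseteq X^{n+1}$ with $G\cdot A$ being $\eta$-dense in $(X^{n+1},d_\infty)$. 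For $\overline a\in A$ set
$$
S_{\overline a}=\big\{y\in\B(\overline x,\sigma,\delta)\del \e g\in G\;\; d_\infty\big(g\overline a,(\overline x,y)\big)<\eta\big\}.
$$
As every point $(\overline x,y)$ of $X^{n+1}$ is within $\eta$ of $G\cdot A$, we get $\B(\overline x,\sigma,\delta)=\bigcup_{\overline a\in A}S_{\overline a}$, and it suffices to cover each $S_{\overline a}\neq\tom$ by finitely many $\eps$-balls.

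For a fixed such $\overline a$, I would choose a base point $y_0\in S_{\overline a}$ together with a witness $g_0\in G$, and let $y\in S_{\overline a}$ with witness $g\in G$ be arbitrary. Put $f=g_0 g\inv$. Applying the isometry $g\inv$ and then $g_0$ to $d_\infty(g\overline a,(\overline x,y))<\eta$ and combining with $d_\infty(g_0\overline a,(\overline x,y_0))<\eta$ gives $d_\infty\big(f(\overline x,y),(\overline x,y_0)\big)<2\eta$. Reading off the first $n$ coordinates yields $d_\infty(f\overline x,\overline x)<2\eta<\sigma$, so $f\in V(\overline x,\sigma)$ and, $f$ being an isometry, also $f\inv\in V(\overline x,\sigma)$; reading off the last coordinate yields $d(fy,y_0)<2\eta$, hence $d(y,f\inv y_0)<2\eta$ with $f\inv y_0\in V(\overline x,\sigma)\cdot y_0$. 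Therefore $S_{\overline a}\subseteq\big(V(\overline x,\sigma)\cdot y_0\big)_{2\eta}$. Now $y_0\in\B(\overline x,\sigma,\delta)$ means $V(\overline x,\sigma)\cdot y_0$ sits inside finitely many $\delta$-balls, say $N$ of them, so $\big(V(\overline x,\sigma)\cdot y_0\big)_{2\eta}$, and hence $S_{\overline a}$, sits inside the same $N$ balls enlarged to radius $\delta+2\eta<\eps$, i.e.\ inside $N$ $\eps$-balls. Summing over the finite set $A$ gives the lemma.

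The one genuinely substantive step is the use of approximate oligomorphicity on $(n+1)$-tuples to convert the bland information ``$(\overline x,y)$ and $(\overline x,y_0)$ lie near a common $G$-orbit'' into the workable statement ``a single $f\in V(\overline x,\sigma)$ moves $y$ to within $2\eta$ of $y_0$''; after that, the smallness of $V(\overline x,\sigma)\cdot y_0$ guaranteed by $y_0\in\B(\overline x,\sigma,\delta)$ simply propagates to all of $S_{\overline a}$. The only thing to be careful about is to fix $\eta$ at the outset so that both requirements --- $f$ remaining inside $V(\overline x,\sigma)$ and the enlarged $\delta$-balls keeping radius below $\eps$ --- hold simultaneously, which is exactly what $2\eta<\min\{\sigma,\eps-\delta\}$ arranges; one should also note in passing that $U=G$ is an admissible choice in Proposition~\ref{roelcke}(3) and that $N$ is allowed to depend on $\overline a$ and $y_0$.
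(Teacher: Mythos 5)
Your argument is correct, but it follows a genuinely different route from the paper's. You apply approximate oligomorphicity in dimension $n+1$ (Proposition \ref{roelcke}(3) with $U=G$) to split $\B(\overline x,\sigma,\delta)$ into finitely many cells $S_{\overline a}$, and then observe that each nonempty cell lies in the $2\eta$-enlargement of a single set $V(\overline x,\sigma)\cdot y_0$ with $y_0\in \B(\overline x,\sigma,\delta)$, which by definition is already covered by finitely many $\delta$-balls; the enlargement costs $2\eta<\eps-\delta$, so the same finitely many centres serve with radius $\eps$. The paper instead uses only the one-dimensional density statement: it fixes a finite $A\subseteq X$ with $V(\overline x,\alpha)\cdot A$ an $\alpha$-net in $X$, and for $y\in\B(\overline x,\sigma,\delta)$ approximated by $gz$ with $g\in V(\overline x,\alpha)$, $z\in A$, it transfers the finite coverability of $V(\overline x,\sigma)\cdot y$ to $V(\overline x,\sigma-\alpha)\cdot z$ via the Hausdorff-distance estimate $d_H(U\cdot x,U\cdot y)\leqslant d(x,y)$, so that the finitely many $(\delta+\alpha)$-balls covering $V(\overline x,\sigma-\alpha)\cdot C$, for the finite subset $C\subseteq A$ of ``good'' net points, serve after a further $\alpha$-enlargement for all of $\B(\overline x,\sigma,\delta)$. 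Your version buys a more transparent finiteness argument (each cell is controlled by one base point, with no auxiliary set $C$ and no Hausdorff-distance bookkeeping), at the price of invoking the density of $G\cdot A$ in $X^{n+1}$ rather than just an $\alpha$-net in $X$ produced by the small entourage $V(\overline x,\alpha)$; both inputs are supplied by Proposition \ref{roelcke}, so either is legitimate here. Your checks of the two constraints on $\eta$ (namely $2\eta<\sigma$ so that $f=g_0g\inv$ and $f\inv$ land in $V(\overline x,\sigma)$, and $2\eta<\eps-\delta$ so that the enlarged balls stay inside $\eps$-balls) are exactly the points that need care, and they are handled correctly.
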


\begin{proof}Let $\alpha>0$ be small enough such that $2\alpha<\sigma$ and $\delta+2\alpha<\eps$. Also, by Proposition \ref{roelcke} (3), let $A\subseteq X$ be a finite set such that $V(\overline x,\alpha)\cdot A$ is an $\alpha$-net in $X$.
Let also $C\subseteq A$ be the subset of all $z\in A$ such that $V(\overline x, \sigma-\alpha)\cdot z$ can be covered by finitely many $(\delta+\alpha)$-balls. Since $A$ and hence $C$ is finite, there are finitely many $(\delta+\alpha)$-balls $B_1,\ldots, B_k$ covering $V(\overline x,\sigma-\alpha)\cdot C$.

Now, suppose that $y\in X$ and $V(\overline x,\sigma)\cdot y$ can be covered by finitely many $\delta$-balls and find $z\in A$ and $g\in V(\overline x,\alpha)$ such that $d(gz,y)<\alpha$. Then
$$
V(\overline x,\sigma-\alpha)\cdot g\inv y\subseteq V(\overline x,\sigma-\alpha)\cdot V(\overline x,\alpha)\cdot y\subseteq V(\overline x,\sigma)\cdot y
$$
can be covered by finitely many $\delta$-balls and so, as
$$
d_H\big(V(\overline x,\sigma-\alpha)\cdot g\inv y,V(\overline x,\sigma-\alpha)\cdot z\big)\leqslant d(g\inv y,z)<\alpha,
$$
also $V(\overline x,\sigma-\alpha)\cdot z$ can be covered by finitely many $(\delta+\alpha)$-balls, i.e., $z\in C$. Since $\alpha<\sigma-\alpha$, we have $gz\in V(\overline x,\sigma-\alpha)\cdot z\subseteq B_1\cup\ldots\cup B_k$ and hence $y\in (B_1)_\alpha\cup\ldots\cup(B_k)_\alpha$. Since the $(B_i)_\alpha$ are each contained in the $\eps$-balls with the same centre, the result follows.
\end{proof}
We also note that
$$
\go B(g\overline x,\sigma,\delta)=g\cdot \go B(\overline x,\sigma,\delta).
$$ 
Using this, we can prove the following.

\begin{lemme} \label{uniform covering}
Suppose $\eps>\delta>0$, $\sigma>0$ and $n\geqslant 1$. Then there is some $k\geqslant 1$ such that for every $\overline z\in X^n$, $\go B(\overline z,\sigma,\delta)$ can be covered by $k$ many $\eps$-balls.
\end{lemme}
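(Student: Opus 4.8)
The plan is to bootstrap Lemma~\ref{covering}, which covers $\go B(\overline x,\sigma,\delta)$ by finitely many $\eps$-balls for a \emph{fixed} base point $\overline x$, into a cover whose cardinality is independent of the base point. The mechanism is the $G$-equivariance $\go B(g\overline x,\sigma,\delta)=g\cdot\go B(\overline x,\sigma,\delta)$ recorded just after Lemma~\ref{covering}, combined with the existence of finite $G$-nets in $X^n$ coming from Roelcke precompactness.

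Concretely, I would first fix a slack $\eta>0$ with $\eta<\sigma/2$ and set $\sigma_0=\sigma-2\eta>0$. Since the action of $G$ on $(X,d)$ is continuous, isometric and has a dense orbit, Proposition~\ref{roelcke}(3), applied to the diagonal action on $(X^n,d_\infty)$ with $U=G$ and precision $\eta$, yields a finite set $A\subseteq X^n$ with $G\cdot A$ being $\eta$-dense in $X^n$. Then, for each $\overline a\in A$, Lemma~\ref{covering} (with the given pair $\eps>\delta$, radius $\sigma_0$, and base point $\overline a$) covers $\go B(\overline a,\sigma_0,\delta)$ by some finite number $k_{\overline a}$ of $\eps$-balls, and I would set $k=\max_{\overline a\in A}k_{\overline a}$, which is finite since $A$ is.

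The remaining step is a perturbation argument. Given an arbitrary $\overline z\in X^n$, choose $h\in G$ and $\overline a\in A$ with $d_\infty(h\overline a,\overline z)<\eta$. By property (3) of the sets $V(\,\cdot\,,r)$ listed above (together with the obvious monotonicity of $V(\overline z,\,\cdot\,)$), one has $V(h\overline a,\sigma_0)\subseteq V\big(\overline z,\sigma_0+2d_\infty(h\overline a,\overline z)\big)\subseteq V(\overline z,\sigma)$, so for every $y\in X$ the orbit $V(h\overline a,\sigma_0)\cdot y$ is contained in $V(\overline z,\sigma)\cdot y$. Hence if $V(\overline z,\sigma)\cdot y$ is covered by finitely many $\delta$-balls, so is $V(h\overline a,\sigma_0)\cdot y$, which gives $\go B(\overline z,\sigma,\delta)\subseteq\go B(h\overline a,\sigma_0,\delta)=h\cdot\go B(\overline a,\sigma_0,\delta)$. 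Since $h$ acts on $X$ by an isometry, it sends the $k_{\overline a}$ $\eps$-balls covering $\go B(\overline a,\sigma_0,\delta)$ to $k_{\overline a}\leqslant k$ many $\eps$-balls whose union contains $h\cdot\go B(\overline a,\sigma_0,\delta)\supseteq\go B(\overline z,\sigma,\delta)$, which is exactly the desired uniform bound.

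I do not expect a genuine obstacle: the real content sits in Lemma~\ref{covering} and in the availability of finite $G$-nets in $X^n$. The only delicate bookkeeping is to reserve room in the radius parameter — choosing $\eta<\sigma/2$ so that $\sigma_0>0$ — and to observe that Lemma~\ref{covering} is applicable precisely because the hypothesis already provides $\eps>\delta$.
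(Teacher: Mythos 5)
Your argument is correct and is essentially the paper's own proof: both use Proposition \ref{roelcke}(3) to produce a finite set whose $G$-orbit is $\eta$-dense in $(X^n,d_\infty)$, apply Lemma \ref{covering} with the slightly shrunk radius $\sigma-2\eta$ at those finitely many base points, and transfer the cover via the equivariance $\go B(g\overline x,\sigma,\delta)=g\cdot\go B(\overline x,\sigma,\delta)$ together with property (3) of the sets $V(\,\cdot\,,r)$. The only (immaterial) difference is that you translate the net point to lie near $\overline z$, whereas the paper translates $\overline z$ to lie near the net point.
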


\begin{proof}
Choose $\alpha>0$ such that $2\alpha<\sigma$ and find by Proposition \ref{roelcke} (3) some $\overline x_1,\ldots,\overline x_p\in X^n$ such that 
$$
G\cdot \overline x_1\cup\ldots \cup G\cdot \overline x_p
$$
is $\alpha$-dense in $X^n$. By Lemma \ref{covering}, pick some $k\geqslant 1$ such that each $\go B(\overline x_i, \sigma-2\alpha,\delta)$ can be covered by $k$ many $\eps$-balls. 

Now suppose $\overline z\in X^n$ is given and find $g\in G$ and $1\leqslant i\leqslant p$ such that
$d_\infty(\overline x_i,g\overline z)<\alpha$. Then 
$$
V(\overline x_i,\sigma-2\alpha)\subseteq V(g\overline z,\sigma)
$$
and hence
\begin{displaymath}\begin{split}
g\cdot \go B(\overline z,\sigma,\delta)&=\go B(g\overline z,\sigma,\delta)\\
&=\{y\in X\del V(g\overline z,\sigma)\cdot y \text{ can be covered by finitely many $\delta$-balls}\}\\
&\subseteq \{y\in X\del V(\overline x_i,\sigma-2\alpha)\cdot y \text{ can be covered by finitely many $\delta$-balls}\}.
\end{split}\end{displaymath}
Since the latter can be covered by $k$ many $\eps$-balls, also $\go B(\overline z,\sigma,\delta)$ can be covered by $k$ many $\eps$-balls.
\end{proof}

The next statement is obvious.
\begin{lemme}
Suppose $\delta>0$, $\sigma>0$ and $\overline x\in X^n$. Then, for any $y\notin \go B(\overline x,\sigma,\delta)$ and any finite set $B_1,\ldots,B_k\subseteq X$ of $\delta$-balls, there is some $g\in V(\overline x,\sigma)$ such that $gy\notin B_1\cup\ldots\cup B_k$.
\end{lemme}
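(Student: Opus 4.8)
The statement is immediate once one unwinds the definition of $\go B(\overline x,\sigma,\delta)$, so the plan is simply to read off the quantifier structure. First I would recall that, by definition, $y\in\go B(\overline x,\sigma,\delta)$ means precisely that the set $V(\overline x,\sigma)\cdot y$ can be covered by \emph{some} finite family of $\delta$-balls. Contraposing, the hypothesis $y\notin\go B(\overline x,\sigma,\delta)$ asserts exactly that $V(\overline x,\sigma)\cdot y$ is \emph{not} contained in any finite union of $\delta$-balls.

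Next I would apply this to the particular finite family $B_1,\ldots,B_k$ given in the statement: since $B_1,\ldots,B_k$ is such a finite family of $\delta$-balls, the previous sentence forces $V(\overline x,\sigma)\cdot y\not\subseteq B_1\cup\ldots\cup B_k$. Hence there is a point of $V(\overline x,\sigma)\cdot y$ lying outside $B_1\cup\ldots\cup B_k$, and writing this point as $gy$ for some $g\in V(\overline x,\sigma)$ yields the required element. There is no genuine obstacle here; the only point requiring the slightest care is making sure one uses the right direction of the definition, namely that "coverable by finitely many $\delta$-balls" is an existential over finite families, so its negation fails against every concrete finite list $B_1,\ldots,B_k$.
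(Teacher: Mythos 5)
Your proof is correct and is exactly the intended argument: the paper itself offers no written proof, simply remarking that the statement is obvious, and the unwinding of the definition of $\go B(\overline x,\sigma,\delta)$ that you give is precisely what makes it so.
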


In the following, assume furthermore that $X$ is non-compact. Since $X$ is not totally bounded, we can find some $\eps>0$ such that $X$ contains an infinite $2\eps$-separated subset $D\subseteq X$.

\begin{lemme}\label{avoid}
For any $\sigma>0$, $0<\delta<\frac \eps2$ and $n\geqslant 1$, there is $\overline y \in X^{<\N}$ such that for any  $\overline x\in X^{n}$, finite $F\subseteq G$ and $h\in G$, there is $g\in V(\overline x, \sigma)$ such that 
$$
g\cdot V(\overline x\con h\inv \overline y, \delta)\cap F\cdot V(\overline y,\delta)\cdot h=\tom.
$$ 
\end{lemme}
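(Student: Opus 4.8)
The plan is to strip the conjugations and translations out of the statement using the elementary identities for the sets $V(\,\cdot\,,\eps)$, thereby reducing the whole assertion to one application of the ``obvious'' lemma above, and to choose $\overline y$ by a counting argument combining Lemma \ref{uniform covering} with the $2\eps$-separation of $D$. Concretely, fix $\sigma>0$, $0<\delta<\frac\eps2$ and $n\geqslant 1$, so that $2\delta<\eps$. Applying Lemma \ref{uniform covering} with $\eps$, $2\delta$, $\sigma$ and $n$ in the roles of its ``$\eps$'', ``$\delta$'', ``$\sigma$'' and ``$n$'' yields $k\geqslant 1$ such that for every $\overline z\in X^n$ the set $\go B(\overline z,\sigma,2\delta)$ is covered by at most $k$ many $\eps$-balls. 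Since $D$ is $2\eps$-separated, each $\eps$-ball meets $D$ in at most one point, so I fix $k+1$ distinct points $y_0,\dots,y_k\in D$ and declare $\overline y=(y_0,\dots,y_k)\in X^{<\N}$; this will be the required tuple.

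Now suppose $\overline x\in X^n$, a finite $F\subseteq G$ and $h\in G$ are given. By the pigeonhole principle some $y_j$ lies outside $\go B(h\overline x,\sigma,2\delta)$, and by the equivariance $\go B(h\overline x,\sigma,2\delta)=h\cdot\go B(\overline x,\sigma,2\delta)$ noted after Lemma \ref{covering}, this means $h\inv y_j\notin\go B(\overline x,\sigma,2\delta)$. Feeding the point $h\inv y_j$, the scale $2\delta$ and the finitely many $2\delta$-balls $\{B(fy_j,2\delta)\mid f\in F\}$ into the ``obvious'' lemma produces $g\in V(\overline x,\sigma)$ with $gh\inv y_j\notin\bigcup_{f\in F}B(fy_j,2\delta)$, so that $d_\infty(gh\inv\overline y,f\overline y)\geqslant d(gh\inv y_j,fy_j)\geqslant 2\delta$ for every $f\in F$.

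It remains to check that this $g$ works, and this is where I expect the only real care is needed. Using the conjugation rule $f\cdot V(\overline x,\eps)\cdot f\inv=V(f\overline x,\eps)$ one has $V(\overline y,\delta)\cdot h=h\cdot V(h\inv\overline y,\delta)$, hence $F\cdot V(\overline y,\delta)\cdot h=Fh\cdot V(h\inv\overline y,\delta)$; and since $d_\infty$ on a concatenated tuple is the maximum of the two coordinatewise sup-distances, $V(\overline x\con h\inv\overline y,\delta)=V(\overline x,\delta)\cap V(h\inv\overline y,\delta)\subseteq V(h\inv\overline y,\delta)$. If some $z$ belonged to the intersection $g\cdot V(\overline x\con h\inv\overline y,\delta)\cap Fh\cdot V(h\inv\overline y,\delta)$, then $z=gk_1=fhk_2$ for some $f\in F$ and $k_1,k_2\in V(h\inv\overline y,\delta)$; comparing the images of the tuple $h\inv\overline y$ under $z$ computed in the two ways, and using that $g$ and $fh$ are isometries while $k_1,k_2$ move $h\inv\overline y$ by less than $\delta$, the triangle inequality gives $d_\infty(gh\inv\overline y,f\overline y)<2\delta$, contradicting the previous paragraph; so the intersection is empty, which is the claim. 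The delicate point is precisely this bookkeeping with the identities $V(\overline y,\delta)h=hV(h\inv\overline y,\delta)$ and $V(\overline x\con h\inv\overline y,\delta)\subseteq V(h\inv\overline y,\delta)$, which is what collapses the two-sided covering problem down to displacing a single point $h\inv y_j$ from finitely many $2\delta$-balls; once that reduction is in place, the counting step (exactly where Lemma \ref{uniform covering} and the $2\eps$-separation of $D$ force $|\overline y|$ to be $k+1$) together with the ``obvious'' lemma finish the argument with no further difficulty.
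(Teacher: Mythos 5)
Your proof is correct and follows essentially the same route as the paper: choose $k$ from Lemma \ref{uniform covering} at scale $2\delta$, take $k+1$ points of the $2\eps$-separated set $D$ as $\overline y$, use pigeonhole to find a coordinate $h\inv y_j\notin\go B(\overline x,\sigma,2\delta)$, and apply the displacement lemma to push $gh\inv y_j$ at distance $\geqslant 2\delta$ from $Fy_j$. The only cosmetic differences are that you apply the covering to $h\overline x$ and invoke equivariance where the paper keeps $\overline x$ and notes that $h\inv y_0,\dots,h\inv y_k$ remain $2\eps$-separated, and that you write out the final triangle-inequality bookkeeping in more detail.
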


\begin{proof}
By Lemma \ref{uniform covering}, we can find some $k\geqslant 1$ such that for any $\overline x\in X^n$ there are  $k$ many $\eps$-balls covering $\B(\overline x,\sigma,2\delta)$. Choose distinct $y_0,\ldots, y_k\in D$ and notice that for any $h\in G$, $\overline x\in X^n$ and $\eps$-balls   $B_1,\ldots,B_k$ covering $\B(\overline x,\sigma,2\delta)$, we have
$$
\{h\inv y_0,\ldots, h\inv y_k\}\not\subseteq B_1\cup\ldots\cup B_k,
$$ 
whence there is some $i=0,\ldots,k$ such that $h\inv y_i\notin \B(\overline x,\sigma,2\delta)$. We set $\overline y=(y_0,\ldots,y_k)$ and $V=V(\overline y,\delta)$.

Suppose now that $\overline x\in X^n$, $F\subseteq G$ is finite and $h\in G$. Then
$$
FVh=Fh\cdot V(h\inv \overline y,\delta)
$$
and we can pick some $i=0,\ldots,k$ such that $h\inv y_i\notin \B(\overline x,\sigma,2\delta)$. It follows that there is some $g\in V(\overline x,\sigma)$ such that 
$$
d(gh\inv y_i,fy_i)\geqslant 2\delta
$$
for all $f\in F$, whence 
$$
g\cdot V(h\inv y_i,\delta)\cap Fh\cdot V(h\inv y_i,\delta)=\tom
$$
and thus also
$$
g\cdot V(\overline x\con h\inv \overline y, \delta)\cap FVh=\tom,
$$ 
which proves the lemma.
\end{proof}

\begin{thm}\label{approx oligo}
Suppose $G$ is a non-compact, Roelcke precompact Polish group. Then there is a neighbourhood basis at $1$, $V_0\supseteq V_1\supseteq \ldots \ni 1$ such that for any $h_n\in G$ and finite $F_n\subseteq G$,
$$
G\neq \bigcup_n F_nV_nh_n.
$$
\end{thm}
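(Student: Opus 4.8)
The plan is to combine the avoidance mechanism of Lemma \ref{avoid} with a fusion argument in the spirit of Proposition \ref{left-translates}, but carried out inside $G$ (using that $G$ is Raikov complete) rather than inside $X$. First I would use Proposition \ref{roelcke}(4) to fix an isometric, approximately oligomorphic action of $G$ on a separable complete metric space $(X,d)$ with a dense orbit, and identify $G$ with a closed subgroup of ${\rm Isom}(X,d)$ in the topology of pointwise convergence; since $G$ is non-compact, $X$ is non-compact, so, as in the discussion preceding Lemma \ref{avoid}, there are $\eps>0$ and an infinite $2\eps$-separated set $D\subseteq X$. The sets $V(\overline x,r)$, $\overline x\in X^{<\N}$, $r>0$, then form a neighbourhood basis at $1$ in $G$. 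I would also fix once and for all a decreasing neighbourhood basis $(U_n)$ at $1$, for each $n$ a tuple $\overline u_n\in X^{<\N}$ and $\eta_n>0$ with $V(\overline u_n,\eta_n)\subseteq U_n$, and a countable dense sequence $(x_i)_{i\in\N}$ in $X$.

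The neighbourhood basis $(V_n)$ is built recursively so that $V_n=V(\overline y_n,\delta_n)$ with $\overline y_n$ extending $\overline y_{n-1}$ as a prefix, $\delta_n\downarrow 0$ and $\delta_n<\min(\eps/2,\eta_n,\delta_{n-1}/2)$. The essential bookkeeping device is a predetermined integer $L_n$ — the length of the running tuple used at stage $n$ of the fusion — defined by $L_0=0$ and $L_{n+1}=L_n+|\overline y_n|+2(n+1)$. At stage $n$ I apply Lemma \ref{avoid} with parameters $\sigma_n:=\delta_n/2^{n+3}$, $\delta_n$ and $L_n$ to get a tuple $\overline y^{(n)}$ with the corresponding avoidance property, and then set $\overline y_n:=\overline y_{n-1}\con\overline y^{(n)}\con\overline u_n$. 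One checks (easily, since $V(\overline t,\delta)$ only shrinks as one enlarges the finite index tuple $\overline t$, and this involves only the underlying set of coordinates) that enlarging the witnessing tuple preserves the avoidance conclusion of Lemma \ref{avoid}, so $\overline y_n$ still has the avoidance property for $(\sigma_n,\delta_n,L_n)$; appending $\overline u_n$ forces $V_n\subseteq U_n$, so $(V_n)$ really is a neighbourhood basis. The numerology guarantees $\sum_{j\geqslant n+1}\sigma_j<\delta_n$ and $\sigma_{n+1}+\delta_{n+1}\leqslant\delta_n$.

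Now, given $h_n\in G$ and finite $F_n\subseteq G$, I run the fusion: set $p_0=1$, $\overline x_0=\tom$, and at stage $n$ apply the avoidance property of $\overline y_n$ to the running tuple $\overline x_n\in X^{L_n}$, the finite set $p_n\inv F_n$, and the element $h_n$, obtaining $g_n\in V(\overline x_n,\sigma_n)$ with
$$
g_n\cdot V(\overline x_n\con h_n\inv\overline y_n,\delta_n)\;\cap\;(p_n\inv F_n)\cdot V(\overline y_n,\delta_n)\cdot h_n=\tom .
$$
Put $p_{n+1}:=p_ng_n$ and $\overline x_{n+1}:=\overline x_n\con h_n\inv\overline y_n\con\overline s_n$, where $\overline s_n$ adjoins the $2(n+1)$ points $x_0,\dots,x_n,p_{n+1}\inv x_0,\dots,p_{n+1}\inv x_n$, so that $\overline x_{n+1}$ has length exactly the predetermined $L_{n+1}$. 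Set $C_n:=p_{n+1}\cdot V(\overline x_{n+1},\delta_n)$. Left-multiplying the displayed disjointness by $p_n$ and using $V(\overline x_{n+1},\delta_n)\subseteq V(\overline x_n\con h_n\inv\overline y_n,\delta_n)$ gives $C_n\cap F_nV_nh_n=\tom$, hence $\overline{C_n}\cap F_nV_nh_n=\tom$ since $F_nV_nh_n$ is open. Using $g_j,g_j\inv\in V(\overline x_j,\sigma_j)\subseteq V(\overline x_m,\sigma_j)$ for $j\geqslant m$ together with the submultiplicativity $V(\overline x,a)V(\overline x,b)\subseteq V(\overline x,a+b)$, one gets $p_m\inv p_{m'}=g_m\cdots g_{m'-1}\in V(\overline x_m,\sum_{j\geqslant m}\sigma_j)$; since $\bigcup_m\overline x_m$ contains the dense set $\{x_i\}$, this shows $(p_n)$ is left-Cauchy, and the coordinates $p_m\inv x_i\in\overline x_m$ ($i<m$) likewise make $p_{m'}p_m\inv$ move $\{x_i\}$ arbitrarily little, so $(p_n)$ is also right-Cauchy. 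By Raikov completeness $p_n\to p^*\in G$, and for each $n$ we have $p_m\in C_n$ for all $m\geqslant n+1$ (because $g_{n+1}\cdots g_{m-1}\in V(\overline x_{n+1},\sum_{j>n}\sigma_j)\subseteq V(\overline x_{n+1},\delta_n)$), so $p^*\in\overline{C_n}$; therefore $p^*\notin F_nV_nh_n$ for every $n$, i.e. $G\neq\bigcup_nF_nV_nh_n$.

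The delicate point — and where the argument really hinges — is the tension between three requirements: Lemma \ref{avoid} must be invoked with a length parameter matching the running tuple; the running tuple must eventually see a dense subset of $X$ (so that the approximants $p_n$ converge in $G$); yet the running tuple's contents depend on the opponent's elements $h_n$, while the $V_n$ have to be chosen in advance. The resolution is to densify the running tuple with a number of extra coordinates that is fixed ahead of time ($2(n+1)$ at stage $n$), which makes each $L_n$ predetermined even though the coordinates themselves are not, and to content oneself with $\bigcap_n\overline{C_n}\neq\tom$ extracted from a two-sided Cauchy sequence and Raikov completeness, rather than attempting to force the nested sets $C_n$ to shrink to a single point (which would require controlling conjugates $p_{n+1}V(\overline x_{n+1},\delta_n)p_{n+1}\inv$ that are again opponent-dependent).
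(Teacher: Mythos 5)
Your proposal is correct and follows essentially the same route as the paper: the same appeal to Proposition \ref{roelcke}(4) and Lemma \ref{avoid}, the same device of predetermining the lengths of the running tuples so that Lemma \ref{avoid} can be invoked in advance, and the same densification of the running tuple by points $x_i$ and $p_{n+1}\inv x_i$ to force convergence of the approximants (the paper phrases this as pointwise convergence to a surjective isometry, you as two-sided Cauchyness plus Raikov completeness, which is the same thing for a closed subgroup of ${\rm Isom}(X,d)$). Your explicit padding by the tuples $\overline u_n$ to guarantee that $(V_n)$ is genuinely a neighbourhood basis is a small but welcome refinement of the paper's argument.
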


\begin{proof}
By Proposition \ref{roelcke} (4), without loss of generality we can suppose that $G$ is a closed subgroup of ${\rm Isom}(X,d)$, where $(X,d)$ is a separable complete metric space with a dense $G$-orbit. Since $G$ in non-compact, so is $X$ and thus there is an $\eps>0$ such that $X$ contains an infinite $2\eps$-separated subset $D\subseteq X$.

Let $z_1,z_2,\ldots$ be a dense sequence in $X$  with each point listed infinitely often and set $\delta_n=\frac \eps{3^n}$. So $\sum_{n\geqslant m}\delta_n\Lim{m\til \infty}0$.

We define inductively tuples $\overline y_i\in X^{<\N}$ and numbers $n_i$ as follows. First, let $n_1=1$ and apply  Lemma \ref{avoid} to $\sigma=\delta=\delta_1$ and $n=n_1$ to get $\overline y_1$. In general, if $n_i$ and $\overline y_i$ are chosen, we set $n_{i+1}=n_i+|\overline y_i|+2$ and apply Lemma \ref{avoid} to $\sigma=\delta=\delta_{i+1}$ and $n=n_{i+1}$ to find $\overline y_{i+1}$. Finally, set $V_i=V(\overline y_i,\delta_i)$.

Thus, for any $\overline x\in X^{n_i}$, finite $F\subseteq G$ and $h\in G$, there is some $g\in V(\overline x,\delta_i)$ such that
$$
g\cdot V(\overline x\con h\inv \overline y_i,\delta_i)\cap FV_ih=\tom.
$$

Now suppose $F_i\subseteq G$ are finite subsets and $h_i\in G$. Set $\overline x_1=z_1$. By induction on $i$, we now construct $\overline x_i\in X^{<\N}$ and $g_i\in G$ such that
\begin{enumerate}
\item $\overline x_i\in X^{n_i}$,
\item $\overline x_{i+1}=\big(\overline x_i,h_i\inv \overline y_i, z_i, (g_1\cdots g_i)\inv(z_i)\big)$,
\item $g_i\in V(\overline x_i,\delta_i)$,
\item $g_1g_2\cdots g_i\cdot V(\overline x_i\con h_i\inv \overline y_i,\delta_i)\cap F_iV_ih_i=\tom$.
\end{enumerate}

Note that since $\delta_{i+1}+\delta_{i+1}<\delta_{i}$, we have by  (2) and (3) above
\begin{displaymath}\begin{split}
g_1g_2\cdots g_{i}g_{i+1}\cdot \overline{V(\overline x_{i+1},\delta_{i+1})}
&\subseteq g_1g_2\cdots g_{i}\cdot V(\overline x_{i+1},\delta_{i+1})\overline{V(\overline x_{i+1},\delta_{i+1})}\\
&\subseteq g_1g_2\cdots g_{i}\cdot V(\overline x_{i+1},\delta_{i})\\
&\subseteq g_1g_2\cdots g_{i}\cdot V(\overline x_{i}\con h_i\inv \overline y_i,\delta_{i})\\
&\subseteq g_1g_2\cdots g_{i}\cdot \overline{V(\overline x_{i},\delta_{i})},
\end{split}\end{displaymath}
and so, in particular,
$$
\bigcap_{i=1}^\infty g_1\cdots g_i\cdot V(\overline x_i\con h\inv_i\overline y_i,\delta_i)= \bigcap_{i=1}^\infty g_1\cdots g_i\cdot \overline{V(\overline x_i,\delta_i)},
$$
which is disjoint from $\bigcup_{i=1}^\infty F_iV_ih_i$.

We claim that $g_1g_2g_3\cdots$ converges pointwise  to a surjective isometry from $X$ to $X$ and hence converges in $G$. 

First, to see that it converges pointwise on $X$ to an isometry $\psi$ from $X$ to $X$, it suffices to show that  $g_1g_2g_3\cdots(z_l)$ converges for any $l$, i.e., that 
$\big(g_1g_2\cdots g_i(z_l)\big)_{i=1}^\infty$ is Cauchy in $X$.
But
\begin{displaymath}\begin{split}
d\big(g_1\cdots g_i(z_l), g_1\cdots g_{i+m}(z_l)\big)
=&d\big(z_l, g_{i+1}\cdots g_{i+m}(z_l)\big)\\
\leqslant& d\big(z_l, g_{i+1}(z_l)\big)+d\big( g_{i+1}(z_l),g_{i+1}g_{i+2}(z_l)\big)\\
&+\ldots+d\big(g_{i+1}\cdots g_{i+m-1}(z_l), g_{i+1}\cdots g_{i+m}(z_l)\big)\\
\leqslant& d\big(z_l, g_{i+1}(z_l)\big)+d\big(z_l, g_{i+2}(z_l)\big)+\ldots+d\big(z_l, g_{i+m}(z_l)\big)\\
<& \delta_{i+1}+\delta_{i+2}+\ldots+\delta_{i+m}\\
<&\delta_i,
\end{split}\end{displaymath}
whenever $i\geqslant l$, showing that the sequence is Cauchy.

To see the pointwise limit $\psi$ is surjective, it suffices to show that the image of $\psi$ is dense in $X$. So fix $z_l$ and $\eps>0$. We show that ${\rm Im}(\psi)\cap B(z_l,\eps)\neq \tom$. Begin by choosing $i$ large enough such that $\delta_i<\eps$ and $z_l=z_i$. Then $z_l=z_i=g_1\cdots g_i(x)$ for some term $x$ in $\overline x_{i+1}$ and hence, by a calculation as above, we find that
\begin{displaymath}\begin{split}
d(z_l, g_1\cdots g_{i+m}(x))=d\big(g_1\cdots g_i(x), g_1\cdots g_{i+m}(x)\big)<\delta_i
\end{split}\end{displaymath}
for any $m\geqslant 0$, whence $\psi(x)=\lim_{m\til \infty}g_1\cdots g_{i+m}(x)$ is within $\eps$ of $z_l$.

Finally, since the sets $g_1\cdots g_i\cdot \overline{V(\overline x_i,\delta_i)}$ are decreasing and 
$$
g_1\cdots g_i\in g_1\cdots g_i\cdot \overline{V(\overline x_i,\delta_i)}
$$
for every $i$, we have 
$$
\psi=\lim_ig_1\cdots g_i\in \bigcap_i g_1\cdots g_i\cdot \overline{V(\overline x_i,\delta_i)}
$$
and hence $\psi\in G\setminus \bigcup_iF_iV_ih_i$, which finishes the proof.
\end{proof}

In \cite{malicki}, Malicki studied Solecki's question of whether any feebly locally compact Polish group is locally compact and proved among other things that none of ${\rm Isom}(\U_1)$, $\ku U(\ell_2)$ nor oligomorphic closed subgroups of $S_\infty$ are feebly non-locally compact. Theorem \ref{oligo} strengthens his result for oligomorphic closed subgroups of $S_\infty$, but does not imply his results for  ${\rm Isom}(\U_1)$ and $\ku U(\ell_2)$.  We do not know if Theorem \ref{approx oligo} can be strengthened to two-sided translates $F_nV_nE_n$, where $F_n,E_n\subseteq G$ are arbitrary finite subsets.


\subsection{Isometric actions defined from narrow sequences}\label{isometric actions}
Using the narrow sequences $(V_n)$ defined hitherto, we shall now proceed to construct (affine) isometric actions of Polish groups with interesting dynamics. The basic underlying construction for this was previously used by Nguyen Van Th\'e and Pestov \cite{pestov} in their proof of the equivalence of (3) and (4) of Theorem \ref{compact}.
\begin{defi}
Suppose $G$ is a Polish group acting continuously and by isometries on a separable complete metric space $(X,d)$. We say that 
\begin{enumerate}
\item $G$ is {\em strongly point moving} if there are $\eps_n>0$ such that for all $x_n,y_n\in X$ there is some $g\in G$ satisfying $d(gx_n,y_n)>\eps_n$ for all $n\in\N$,
\item $G$ is {\em strongly compacta moving} if there are $\eps_n>0$ such that for all compact subsets $C_n\subseteq X$ there is some $g\in G$ satisfying ${\rm dist}(gC_n,C_n)>\eps_n$ for all $n\in \N$.
\end{enumerate}
\end{defi}
Here the distance between two compact sets is the minimum distance between points in the two sets.

We can now reformulate the existence of narrow sequences in Polish groups by the existence of strongly point moving isometric actions as follows.
\begin{thm}\label{moving points}
Let $G$ be a Polish group. Then the following are equivalent.
\begin{enumerate}
\item $G$ admits a strongly point moving continuous isometric action on a separable complete metric space,  
\item $G$ has a neighbourhood basis $(V_n)$ at $1$ such that for any $f_n, h_n\in G$, $G\neq \bigcup_nf_nV_nh_n$.
\end{enumerate}
\end{thm}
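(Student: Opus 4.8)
The plan is to prove the two implications separately, using the space of molecules / Arens--Eells type construction from Section \ref{affine} to pass from a narrow neighbourhood basis to a strongly point moving action, and a direct diagonal argument for the converse.

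\emph{From (1) to (2).} Suppose $G$ acts continuously and by isometries on a separable complete metric space $(X,d)$, witnessed by constants $\eps_n>0$ as in the definition of strongly point moving: for all sequences $x_n,y_n\in X$ there is $g\in G$ with $d(gx_n,y_n)>\eps_n$ for all $n$. Fix a base point $x_*\in X$ and define $V_n=\{g\in G\del d(gx_*,x_*)<\eps_n/3\}$; since the action is continuous, these form a neighbourhood basis at $1$ provided we also intersect with a fixed countable neighbourhood basis of $G$ (replace $\eps_n$ by $\min\{\eps_n, \text{something}\}$ and shrink so that $(V_n)$ is decreasing and a basis --- this is harmless since making $\eps_n$ smaller only weakens the moving hypothesis, which still holds). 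Now suppose toward a contradiction that $G=\bigcup_n f_nV_nh_n$ for some $f_n,h_n\in G$. Put $x_n=h_nx_*$ and $y_n=f_n\inv x_*$. By the strongly point moving property there is $g\in G$ with $d(gh_nx_*, f_n\inv x_*)>\eps_n$ for all $n$. But $g=f_{n_0}v h_{n_0}$ for some $n_0$ and $v\in V_{n_0}$, so
\[
d(gh_{n_0}x_*,f_{n_0}\inv x_*)=d(f_{n_0}vh_{n_0}h_{n_0}x_*, f_{n_0}\inv x_*).
\]
Wait --- the exponents must be tracked carefully: writing $g=f_{n_0}vh_{n_0}$ one gets $d(gx, f_{n_0}\inv x_*)=d(vh_{n_0}x, x_*)$ by left-invariance of the metric of the isometric action only after applying $f_{n_0}\inv$; so with $x=h_{n_0}\inv x_*$ in place of what I wrote, i.e. redefining $x_n=h_n\inv x_*$, $y_n=f_n\inv x_*$, we get $d(gx_{n_0},f_{n_0}\inv x_*)=d(vx_*,x_*)<\eps_{n_0}/3$, contradicting $d(gx_{n_0},y_{n_0})>\eps_{n_0}$. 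This gives (2).

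\emph{From (2) to (1).} Given the neighbourhood basis $(V_n)$, fix a compatible left-invariant metric $d$ on $G$ (or a complete two-sided one; only left-invariance is needed for the action). Let $G$ act on $X=G$ by left translation, which is a continuous isometric action of $G$ on the separable complete metric space $(G,d_{ts})$ where we take the completion if necessary and use $d=d_{ts}$; left translations are $d_{ts}$-isometries. Choose $\eps_n>0$ small enough that $\{g\del d(g,1)<\eps_n\}\subseteq V_n$. I claim this action is strongly point moving with these constants. Given $x_n,y_n\in X=G$, suppose toward a contradiction that for every $g\in G$ there is some $n$ with $d(gx_n,y_n)\leqslant\eps_n$, i.e. $y_n\inv gx_n\in\{h\del d(h,1)<\text{(say)}\,2\eps_n\}\subseteq V_n'$ for a slightly larger basis $V_n'$; rearranging, $g\in y_n V_n' x_n\inv$. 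Then $G=\bigcup_n (y_n)V_n'(x_n\inv)$, contradicting (2) applied to $f_n=y_n$, $h_n=x_n\inv$ and the basis $(V_n')$. Thus such a $g$ exists, proving (1). One must be slightly careful about dense orbits and whether $X$ should be $G$ itself or its completion, and about matching the exact form of the hypothesis (single elements $f_n,h_n$ on both sides) --- but these are routine once the translation dictionary $g\in f_nV_nh_n \iff$ a distance estimate is set up.

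\emph{Main obstacle.} The genuine content is making sure the \emph{same} neighbourhood basis works in both directions and that the quantifier structure (for \emph{all} sequences $x_n,y_n$ there \emph{exists} $g$ moving all of them simultaneously) matches the covering statement $G\neq\bigcup_n f_nV_nh_n$ exactly, rather than the a priori weaker "for each sequence of single translates" or "finitely many translates per level". The translation is clean: $g\notin f_nV_nh_n$ for all $n$ is literally equivalent to $f_n\inv g h_n\inv\notin V_n$ for all $n$, which by the left-invariance of $d$ and suitable choice of $\eps_n$ is equivalent to $d(g\cdot h_n\inv\!\cdot\text{(appropriate point)},\ \text{(appropriate point)})>\eps_n$; once this is written out the theorem is essentially a tautology. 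The only place requiring mild care is continuity of the constructed action and separability/completeness of the target space, both of which are automatic for the left-translation action on (the completion of) a Polish group with a left-invariant metric.
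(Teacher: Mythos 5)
Your proposal follows essentially the same route as the paper's proof: for (2)$\Rightarrow$(1) the left-translation action on the completion of $G$ with respect to a left-invariant metric, and for (1)$\Rightarrow$(2) the orbit map at a base point, with the dictionary $g\in f_nV_nh_n\Leftrightarrow d(gh_n\inv x, f_nx)<\eps_n$. Three slips should be repaired, though none reflects a missing idea. First, in (1)$\Rightarrow$(2) you must take $y_n=f_nx_*$ rather than $f_n\inv x_*$: writing $g=f_{n_0}vh_{n_0}$ gives $gx_{n_0}=f_{n_0}vx_*$, so $d(gx_{n_0},f_{n_0}x_*)=d(vx_*,x_*)<\eps_{n_0}/3$ by isometry of the $f_{n_0}$-translation, whereas your displayed identity $d(gx_{n_0},f_{n_0}\inv x_*)=d(vx_*,x_*)$ is false (applying $f_{n_0}\inv$ sends the second point to $f_{n_0}^{-2}x_*$, not $x_*$). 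Second, in (2)$\Rightarrow$(1) you cannot pass to a \emph{larger} basis $(V_n')$: hypothesis (2) is only guaranteed for the given $(V_n)$, and enlarging the sets could destroy the non-covering property. Instead shrink the radii so that $\{h\del d(h,1)<2\eps_n\}\subseteq V_n$; the paper normalises each $V_n$ to be exactly the ball of radius $3\eps_n$, which is legitimate because shrinking the $V_n$ preserves (2), and the extra factor of $3$ absorbs the approximation of points of the completion $X$ by elements $h_n\inv,f_n\in G$ --- a step you acknowledge but do not carry out. Third, left translations are \emph{not} isometries of $d_{ts}=d_l+d_r$ (the right-invariant summand is moved), so the phrase ``use $d=d_{ts}$'' must be dropped; the correct choice is the left-invariant metric $d_l$ and its completion, as in your parenthetical alternative and in the paper.
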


\begin{proof}(2)$\saa$(1): Suppose (2) holds and let $V_0\supseteq V_1\supseteq \ldots \ni 1$ be the given neighbourhood basis at $1$. Fix also a compatible left-invariant metric $d$ on $G$. Then, by decreasing each $V_n$, we can suppose that 
$$
V_n=\{g\in G\del d(g,1)<3\eps_n\}
$$
for some $\eps_n>0$. Note then that, for $f,g,h\in G$, we have
$$
g\notin fV_nh
\equi 
f\inv gh\inv \notin V_n
\equi 
d(f\inv gh\inv ,1)\geqslant 3\eps_n
\equi
d(gh\inv,f)\geqslant 3\eps_n.
$$
Let now $(X,d)$ denote the completion of $G$ with respect to $d$ and consider the extension of the left shift action of $G$ on itself to $X$. Then, for any $x_n, y_n\in X$, there are $f_n,h_n\in G$ such that $d(x_n,h\inv_n)<\eps_n$ and $d(y_n,f_n)<\eps_n$, whence, if $g\notin \bigcup_{n}f_nV_nh_n$, we have $d(gh_n\inv,f_n)\geqslant 3\eps_n$ and thus $d(gx_n,y_n)>\eps_n$ for all $n$. Thus, $G$ is strongly point moving.

(1)$\saa$(2): Let $\eps_n>0$ be the constants given by a strongly point moving isometric action of $G$ on a separable complete metric space $X$ and let $x\in X$ be fixed. Set $W_n=\{g\in G\del d(gx,x)<\eps_n\}$ and let $V_n\subseteq W_n$ be open subsets such that $(V_n)$ forms a neighbourhood basis at $1$. Then, if $f_n, h_n\in G$ are given, find some $g$ such that $d(gh_n\inv x, f_nx)>\eps_n$ for all $n$. It thus follows that $g\notin \bigcup_nf_nV_nh_n$.
\end{proof}

The following lemma is proved in a slightly different but equivalent setup in \cite{pestov}.
\begin{lemme}\label{pestov}
Suppose $G$ is a group acting by isometries on a metric space $(X,d)$, $e\in X$ and let $\rho_e$ denote the affine isometric action of $G$ on $\AE(X)$ defined by 
$$
\rho_e(g)m=m(g\inv\;\cdot\;)+m_{ge,e}.
$$
Then for any $m_1,m_2\in \AE(X)$ there are finite sets $F,E\subseteq X$ such that if  $g\in G$ satisfies $d(gF,E)>2\eps$, then also  $\norm{\rho_e(g)m_1-m_2}>\eps$.
\end{lemme}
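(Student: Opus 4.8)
The plan is to reduce to the case of molecules and then exploit the Lipschitz description of the Arens--Eells norm. First I would fix $\eps>0$ and, using that $\M(X)$ is dense in $\AE(X)$, choose finitely supported molecules $m_1',m_2'\in\M(X)$ with $\norm{m_1-m_1'}_{\AE}<\eps/2$ and $\norm{m_2-m_2'}_{\AE}<\eps/2$. Since $d$ is $G$-invariant, every $\rho_e(g)$ is an affine isometry of $\AE(X)$, so $\norm{\rho_e(g)m_1-\rho_e(g)m_1'}_{\AE}=\norm{m_1-m_1'}_{\AE}$, and hence
$$
\norm{\rho_e(g)m_1-m_2}_{\AE}\;>\;\norm{\rho_e(g)m_1'-m_2'}_{\AE}-\eps .
$$
Thus it suffices to produce finite sets $F,E\subseteq X$ such that $d(gF,E)>2\eps$ forces $\norm{\rho_e(g)m_1'-m_2'}_{\AE}\geq 2\eps$.

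I would take $F=\mathrm{supp}(m_1')\cup\{e\}$ and $E=\mathrm{supp}(m_2')\cup\{e\}$, both finite and non-empty. The key point is to locate the support of $\rho_e(g)m_1'-m_2'$. Writing $\rho_e(g)m_1'=\pi(g)m_1'+m_{ge,e}=m_1'(g\inv\,\cdot\,)+\delta_{ge}-\delta_e$, one checks that $\mathrm{supp}\big(\pi(g)m_1'\big)\subseteq g\cdot\mathrm{supp}(m_1')\subseteq gF$ and that $ge\in gF$, while $\delta_e$ and $m_2'$ are supported on $E$. When $d(gF,E)>2\eps$ the sets $gF$ and $E$ are disjoint, so $\rho_e(g)m_1'-m_2'$ restricted to $gF$ is exactly $\pi(g)m_1'+\delta_{ge}$, which has total mass $0+1=1$ (as $\pi(g)m_1'$ is again a molecule), whereas its restriction to $E$ is $-\delta_e-m_2'$.

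To conclude I would test $\rho_e(g)m_1'-m_2'$ against the $1$-Lipschitz function $f(x)=\min\big(d(x,E),2\eps\big)$, which vanishes on $E$ and equals $2\eps$ at every point of $gF$. By the Lipschitz formula for the Arens--Eells norm,
$$
\norm{\rho_e(g)m_1'-m_2'}_{\AE}\;\geq\;\sum_{x\in X}\big(\rho_e(g)m_1'-m_2'\big)(x)\,f(x)\;=\;2\eps\cdot 1+0\;=\;2\eps ,
$$
which combined with the displayed inequality of the first paragraph gives $\norm{\rho_e(g)m_1-m_2}_{\AE}>\eps$.

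The argument is essentially routine; the one point requiring attention is the bookkeeping around the cocycle term. The translation part $m_{ge,e}$ of $\rho_e(g)$ always deposits mass $-1$ at the base point $e$ and mass $+1$ at the moved point $ge$, and it is precisely this $+\delta_{ge}$ — which is the reason $F$ must be enlarged to contain $e$, so that $ge\in gF$ — that forces the $gF$-block of $\rho_e(g)m_1'-m_2'$ to carry net mass exactly $1$ and hence makes the Lipschitz test function detect a gap of size $2\eps$. Beyond that, one only has to verify the support inclusions, the mass computation, and the constants in the initial molecule approximation.
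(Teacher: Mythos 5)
Your proof is correct and follows essentially the same route as the paper's: approximate $m_1,m_2$ by molecules, take $F,E$ to be their supports together with the base point $e$, and test against the $1$-Lipschitz function $\min(d(\cdot,E),2\eps)$, with the cocycle term $m_{ge,e}$ supplying the net mass $+1$ on the $gF$-side that produces the gap $2\eps$. The only difference is cosmetic bookkeeping (you sum the mass over the two blocks, the paper evaluates the test function term by term), so no further comment is needed.
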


\begin{proof}
By approximating each $m_i$ by a molecule within distance $\frac \eps2$, it suffices to show that for any molecules $m_1,m_2\in \M(X)$ there are finite sets $F,E\subseteq G$ such that if  $d(gF,E)>2\eps$, then also $\norm{\rho_e(g)m_1-m_2}\geqslant2\eps$. For this, write $m_1=\sum_{i=1}^na_im_{x_i,y_i}$, $m_2=\sum_{i=1}^kb_im_{v_i,w_i}$ and and assume that  $g\in G$ satisfies
$$
d(\{gx_i,gy_i,ge\}_{i=1}^n, \{v_i,w_i,e\}_{i=1}^k)>2\eps.
$$
Now, letting $f\colon X\til \R$ be defined by 
$$
f(x)= \min\big\{2\eps, d(x, \{v_i,w_i,e\}_{i=1}^k)\big\}
$$
we see that $f$ is $1$-Lipschitz and $f(x)=2\eps$ for all $x\in \{gx_i,gy_i,ge\}_{i=1}^n$, while $f(x)=0$ for $x \in \{v_i,w_i,e\}_{i=1}^k$.
It follows that
\begin{displaymath}\begin{split}
\norm{\rho_e(g)m_1-m_2}
&=\Norm{\sum_{i=1}^na_im_{gx_i,gy_i}+m_{ge,e}-\sum_{i=1}^kb_im_{v_i,w_i}  } \\
&=\sum_{i=1}^na_i\big(f(x_i)-f(y_i)\big)+\big(f(ge)-f(e)\big)+\sum_{i=1}^kb_i\big(f(v_i)-f(w_i)\big)\\
&=2\eps,
\end{split}\end{displaymath}
which proves the lemma.
\end{proof}

\begin{thm}\label{moving compacta}
Let $G$ be a Polish group. Then the following are equivalent.
\begin{enumerate}
\item $G$ admits a strongly compacta moving continuous affine isometric action on a separable Banach space,  
\item $G$ has a neighbourhood basis $(V_n)$ at $1$ such that for any finite subsets $F_n,E_n\subseteq  G$, $G\neq \bigcup_nF_nV_nE_n$.
\end{enumerate}
\end{thm}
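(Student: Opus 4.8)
The plan is to follow the proof of Theorem~\ref{moving points} almost verbatim, replacing the elementary metric displacement estimate used there by the quantitative estimate of Lemma~\ref{pestov}, so that the only affine isometric actions that ever appear are the Arens--Eells actions $\rho_e$. As in Theorem~\ref{moving points}, the two implications are essentially dual.

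For $(1)\Rightarrow(2)$: let $\rho\colon G\til\aff Y$ be a strongly compacta moving continuous affine isometric action on a separable Banach space $Y$, with linear part $\pi$, cocycle $b$, and witnessing constants $\eps_n>0$. Since $b$ is continuous and $b(1)=0$, each $W_n=\{g\in G\del\norm{b(g)}<\eps_n\}$ is an open neighbourhood of $1$, and I would choose open sets $V_n\subseteq W_n$ forming a neighbourhood basis at $1$. Given finite sets $F_n,E_n\subseteq G$, set $C_n=\{\rho(f)(0)\del f\in F_n\}\cup\{\rho(e\inv)(0)\del e\in E_n\}$, a finite, hence compact, subset of $Y$, and apply strong compacta moving to obtain $g\in G$ with ${\rm dist}(\rho(g)C_n,C_n)>\eps_n$ for all $n$. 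If one had $g\in F_nV_nE_n$, say $g=fve$ with $f\in F_n$, $e\in E_n$, $v\in V_n$, then --- using that $\rho$ is a homomorphism and $\rho(f)$ an affine isometry --- the point $\rho(g)\big(\rho(e\inv)(0)\big)=\rho(f)\rho(v)(0)$ lies within $\norm{\rho(v)(0)}=\norm{b(v)}<\eps_n$ of $\rho(f)(0)$; since $\rho(e\inv)(0)$ and $\rho(f)(0)$ both lie in $C_n$, this contradicts ${\rm dist}(\rho(g)C_n,C_n)>\eps_n$. Hence $g\notin\bigcup_nF_nV_nE_n$.

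For $(2)\Rightarrow(1)$: fix a compatible left-invariant metric $d$ on $G$, let $G$ act on $(G,d)$ by left translation (by isometries), and let $\rho_e$ be the associated continuous affine isometric action of $G$ on the separable Banach space $\AE(G)$ from Section~\ref{affine} (its continuity and the separability of $\AE(G)$ being as recorded there and in Section~\ref{topologies}). After replacing each $V_n$ by a smaller ball I may assume $V_n=\{g\del d(g,1)<r_n\}$ with $r_n\downarrow 0$; the covering-failure hypothesis passes to these smaller sets. I claim $\rho_e$ is strongly compacta moving with constants $\eps_n=r_n/8$. Given compact $C_n\subseteq\AE(G)$, I would cover each by finitely many $\frac{r_n}{16}$-balls centred at molecules $m^{(n)}_1,\ldots,m^{(n)}_{k_n}\in\M(G)$ (possible since $\M(G)$ is dense in $\AE(G)$), and for each $i,j\leqslant k_n$ apply Lemma~\ref{pestov} to $m^{(n)}_i,m^{(n)}_j$ with parameter $\frac{r_n}{4}$, obtaining finite sets $F^{(n)}_{i,j},E^{(n)}_{i,j}\subseteq G$; put $F^*_n=\bigcup_{i,j}F^{(n)}_{i,j}$ and $E^*_n=\bigcup_{i,j}E^{(n)}_{i,j}$. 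Left-invariance of $d$ gives $\{g\del d(gF^*_n,E^*_n)<r_n\}=E^*_nV_n(F^*_n)\inv$, so the hypothesis applied to the finite sets $E^*_n$ and $(F^*_n)\inv$ produces $g\in G$ with $d(gF^*_n,E^*_n)\geqslant r_n$ for all $n$; as $F^{(n)}_{i,j}\subseteq F^*_n$ and $E^{(n)}_{i,j}\subseteq E^*_n$ this forces $d(gF^{(n)}_{i,j},E^{(n)}_{i,j})\geqslant r_n>\frac{r_n}{2}$, whence Lemma~\ref{pestov} yields $\norm{\rho_e(g)m^{(n)}_i-m^{(n)}_j}>\frac{r_n}{4}$ for all $i,j$. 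Since $\rho_e(g)$ is an isometry and $\{m^{(n)}_i\}$ is an $\frac{r_n}{16}$-net of $C_n$, the triangle inequality then gives ${\rm dist}(\rho_e(g)C_n,C_n)>\frac{r_n}{4}-\frac{r_n}{16}-\frac{r_n}{16}=\frac{r_n}{8}$, as required.

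I expect the main subtlety to be bookkeeping rather than any isolated estimate: the constants $\eps_n$ (equivalently the radii $r_n$) and the basis $(V_n)$ must be pinned down once and for all, before the compact sets $C_n$ are presented, yet the auxiliary finite sets $F^*_n,E^*_n$ --- and in particular their cardinalities --- depend on the $C_n$. This causes no difficulty precisely because the covering-failure statement in $(2)$ quantifies over \emph{arbitrary} finite sets $F_n,E_n$ with no control on size; modulo that point everything reduces to the Arens--Eells machinery of Sections~\ref{affine} and \ref{topologies} together with Lemma~\ref{pestov}, exactly as in Theorem~\ref{moving points}.
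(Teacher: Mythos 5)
Your proposal is correct and follows essentially the same route as the paper: the $(2)\Rightarrow(1)$ direction uses the left-translation action on $(G,d)$, the Arens--Eells affine isometric action $\rho_e$, and Lemma~\ref{pestov} applied to a finite net of each compact set (the paper uses the constants $6\eps_n$, $3\eps_n$, $\eps_n$ where you use $r_n$, $r_n/4$, $r_n/8$, which is immaterial), while your $(1)\Rightarrow(2)$ argument is exactly the adaptation of Theorem~\ref{moving points} that the paper leaves to the reader. No gaps.
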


\begin{proof}
The implication (1)$\saa$(2) is similar to that of Theorem \ref{moving points}. Also for (2)$\saa$(1), let again $d$ be a compatible left-invariant metric on $G$ and let $G$ act on itself on the left. As before, we can suppose that
$V_n=\{g\in G\del d(g,1)<6\eps_n\}$ for some $\eps_n>0$  and thus  for all finite sets $F_n, E_n\subseteq G$ there is some $g\in G$ such that  $d(gF_n,E_n)\geqslant6\eps_n$ for all $n\in \N$. 
Fix an arbitrary element $e\in G$ and let $\rho_e$ denote the affine isometric action of $G$ on $\AE(G)$ defined by $\rho_e(g)m=m(g\inv\;\cdot\;)+m_{ge,e}$. 

Assume now that $C_n\subseteq \AE(G)$ are compact and find finite $\eps_n$-dense subsets $M_n\subseteq C_n$. By Lemma \ref{pestov}, there are finite subsets $F_n,E_n\subseteq G$ such that if $d(gF_n,E_n)>6\eps_n$, then $\norm{\rho_e(g)m_1-m_2}>3\eps_n$ for all $m_1, m_2\in M_n$, whereby also $\norm{\rho_e(g)m_1-m_2}>\eps_n$ for all $m_1, m_2\in C_n$, finishing the proof of the theorem.
\end{proof}

Combining Theorems \ref{moving points}, \ref{moving compacta}, \ref{oligo}, \ref{approx oligo}, \ref{small conj} and Propositions \ref{loc comp} (b), \ref{two-sided invariant}, we obtain the following two corollaries.

\begin{cor}The following classes of Polish groups admit strongly point moving, continuous isometric actions on  separable complete metric spaces,
\begin{itemize}
\item non-discrete, unimodular, locally compact groups,
\item non-compact, Roelcke precompact groups,
\item Polish groups $G$ such that 
$\{g\in G\del 1\in {\rm cl}(g^G)\}$
is not comeagre in any neighbourhood of $1$.
\end{itemize}
\end{cor}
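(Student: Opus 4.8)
The plan is to obtain all three cases as immediate consequences of Theorem~\ref{moving points}, which asserts that a Polish group $G$ admits a strongly point moving continuous isometric action on a separable complete metric space precisely when $G$ has a neighbourhood basis $(V_n)$ at $1$ satisfying $G\neq\bigcup_n f_nV_nh_n$ for all $f_n,h_n\in G$. Thus the whole task reduces to exhibiting such a narrow neighbourhood basis in each of the three classes, and in each case the relevant narrowness result proved earlier does essentially all the work; the corollary is purely a matter of assembly, as its statement in the text already signals.

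The cleanest case is that of non-compact, Roelcke precompact Polish groups: Theorem~\ref{approx oligo} already produces a neighbourhood basis $(V_n)$ at $1$ with $G\neq\bigcup_n F_nV_nh_n$ for all $h_n\in G$ and all finite $F_n\subseteq G$, and specialising to singletons $F_n=\{f_n\}$ yields exactly condition~(2) of Theorem~\ref{moving points}, whence the desired action.

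For the other two classes, Proposition~\ref{loc comp}~(b) (in the non-discrete, unimodular, locally compact case) and Theorem~\ref{small conj} (under the hypothesis that $\{g\in G\del 1\in{\rm cl}(g^G)\}$ is not comeagre in any neighbourhood of $1$) each furnish a \emph{decreasing sequence} of open sets $V_0\supseteq V_1\supseteq\cdots\ni 1$ with $G\neq\bigcup_n f_nV_ng_n$ for all $f_n,g_n\in G$. The one point that needs attention is that such a sequence need not shrink to $\{1\}$, so I would fix any decreasing neighbourhood basis $(U_n)$ at $1$ (available since $G$ is metrisable) and pass to $V_n'=V_n\cap U_n$: this is still decreasing, it is now a genuine neighbourhood basis because $V_n'\subseteq U_n$, and the non-covering inequality is inherited from $(V_n)$ since $V_n'\subseteq V_n$. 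Applying Theorem~\ref{moving points} to $(V_n')$ then completes these cases.

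I do not expect any genuine obstacle here: the only non-formal ingredient is the trivial refinement of a decreasing sequence of open sets to an honest neighbourhood basis in the first and third cases, and this cannot destroy the covering failure precisely because it only shrinks each $V_n$. The substance of the statement is already contained in Theorems~\ref{moving points}, \ref{approx oligo}, \ref{small conj} and Proposition~\ref{loc comp}~(b).
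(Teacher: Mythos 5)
Your proposal is correct and matches the paper's (implicit) argument: the corollary is stated as an immediate combination of Theorem \ref{moving points} with Proposition \ref{loc comp}~(b), Theorem \ref{approx oligo} (specialised to singletons $F_n$), and Theorem \ref{small conj}. Your extra remark about refining the decreasing sequences from Proposition \ref{loc comp}~(b) and Theorem \ref{small conj} to a genuine neighbourhood basis by intersecting with one is a valid and harmless patch of a detail the paper glosses over.
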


\begin{cor}The following classes of Polish groups admit strongly compacta moving, continuous affine isometric actions on  separable Banach spaces,
\begin{itemize}
\item non-locally compact SIN groups,
\item oligomorphic closed subgroups of $S_\infty$.
\end{itemize}
\end{cor}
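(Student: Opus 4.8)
The plan is to deduce both assertions from the characterisation established in Theorem \ref{moving compacta}: a Polish group $G$ admits a strongly compacta moving continuous affine isometric action on a separable Banach space if and only if $G$ has a neighbourhood basis $(V_n)$ at $1$ which cannot cover $G$ by two-sided translates, i.e.\ with $G\neq\bigcup_nF_nV_nE_n$ for all finite $F_n,E_n\subseteq G$. So it suffices to exhibit such a ``narrow'' neighbourhood basis in each of the two cases and then to quote the (2)$\saa$(1) direction of Theorem \ref{moving compacta}, which in fact builds the action concretely on the Arens--Eells space $\AE(G)$ of a compatible left-invariant metric via the cocycle $\rho_e$.

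For a non-locally compact Polish SIN group $G$ the relevant basis is supplied by Proposition \ref{two-sided invariant}: since $G$ carries a compatible two-sided invariant metric (by Klee's theorem, recalled earlier) which is automatically complete, the inductive tree construction used for Proposition \ref{Weil complete} --- now exploiting two-sided invariance to fit infinitely many pairwise disjoint two-sided translates inside any open set that is not relatively compact --- produces open sets $V_0\supseteq V_1\supseteq\ldots\ni 1$ with $G\neq\bigcup_nF_nV_nE_n$ for all finite $F_n,E_n\subseteq G$. For an oligomorphic closed subgroup $G\leqslant S_\infty$ the basis is the one constructed in Theorem \ref{oligo}: taking $V_n=G_{A_n}$ for an increasing exhaustion of $\N$ by finite algebraically closed sets $A_n$ with $M(0)<|A_0|$, the argument there shows that $\bigcup_nF_nV_nE_n$ always misses a point of $G$. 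Applying Theorem \ref{moving compacta} to each of these bases yields the two claimed families of actions.

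There is no genuine obstacle here, as all of the substantive work has already been carried out: the only thing to verify is that hypothesis (2) of Theorem \ref{moving compacta} is verbatim the conclusion of Proposition \ref{two-sided invariant} in the first case and of Theorem \ref{oligo} in the second, which is immediate. In effect the corollary is a one-line consequence of the combination of Theorem \ref{moving compacta}, Proposition \ref{two-sided invariant} and Theorem \ref{oligo}.
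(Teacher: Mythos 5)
Your proposal is correct and is exactly the paper's argument: the corollary is obtained by combining the (2)$\Rightarrow$(1) direction of Theorem \ref{moving compacta} with the narrow neighbourhood bases produced by Proposition \ref{two-sided invariant} for non-locally compact SIN groups and by Theorem \ref{oligo} for oligomorphic closed subgroups of $S_\infty$. Your identification of hypothesis (2) of Theorem \ref{moving compacta} with the conclusions of those two results is the whole content of the proof.
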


\end{document}